\newtheorem{theorem}{Theorem}[section]
\newtheorem{lemma}[theorem]{Lemma}
\newtheorem{proposition}[theorem]{Proposition}
\newtheorem{corollary}[theorem]{Corollary}
\newtheorem{mtheo}{Theorem}
\theoremstyle{definition}
\newtheorem{remark}[theorem]{Remark}
\newtheorem*{conjecture}{Conjecture}
\numberwithin{equation}{section}
\newcommand{\eqdef}{\stackrel{\scriptscriptstyle\rm def}{=}}
\DeclareMathOperator{\diam}{diam}
\DeclareMathOperator{\dist}{dist}
\DeclareMathOperator{\interior}{int}
\DeclareMathOperator{\llangle}{\langle\hspace{-0.2cm}\langle}
\DeclareMathOperator{\rrangle}{\rangle\hspace{-0.2cm}\rangle}
\def\s{{\rm s}}
\def\u{{\rm u}}
\def\cs{{\rm cs}}
\def\cu{{\rm cu}}
\def\sW{\mathscr{W}}
\def\bN{\mathbb{N}}
\def\bZ{\mathbb{Z}}
\def\bR{\mathbb{R}}
\def\cF{\EuScript{F}}
\def\cC{\EuScript{C}}
\def\cK{\EuScript{K}}
\def\cR{\EuScript{R}}
\def\eS{\mathscr{S}}
\def\cT{\mathscr{T}}
\def\cH{\EuScript{H}}
\def\cM{\EuScript{M}}
\def\cE{\mathcal{E}}
\def\cL{\EuScript{L}}
\DeclareMathSymbol{\varnothing}{\mathord}{AMSb}{"3F}
\renewcommand{\emptyset}{\varnothing}
\begin{document}

\title[Non-hyperbolic behavior of geodesic flows]{Non-hyperbolic behavior of geodesic flows\\ of rank 1 surfaces}
\author[K.~Gelfert]{Katrin Gelfert}
\address{Instituto de Matem\'atica Universidade Federal do Rio de Janeiro, Av. Athos da Silveira Ramos 149, Cidade Universit\'aria - Ilha do Fund\~ao, Rio de Janeiro 21945-909,  Brazil}\email{gelfert@im.ufrj.br}

\begin{abstract}
	We prove that for the geodesic flow of a rank 1 Riemannian surface which is expansive but not Anosov the Hausdorff dimension of the set of vectors with only zero Lyapunov exponents  is large. 
\end{abstract}

\begin{thanks}
{KG  has been supported by CNPq (Brazil). She is very grateful for the comments by the referee.}
\end{thanks}

 \keywords{Lyapunov exponents, multifractal formalism, Hausdorff dimension, geodesic flow, rank 1 surfaces}
\subjclass[2000]{Primary: %
53D25 
37D40, 
37D25, 
37D35, 
37C45
}
\maketitle
\tableofcontents
\section{Introduction}

Let $M$ be a simply connected compact Riemannian surface of nonpositive sectional curvature and negative Euler characteristic. We consider the geodesic flow $G=(g^t)_{t\in\bR}$ on the unit tangent bundle $T^1M$. In this paper we complete our analysis in~\cite{BurGel:14} providing information of the Hausdorff dimension of the following type of level sets: given $\alpha\ge0$ let
\begin{multline}\label{neeww}
\cL^+(\alpha) \eqdef \\\Big\{v\in T^1M\colon 
 	v\text{ is Lyapunov forward regular and has Lyapunov exponents }0,\pm\alpha
\Big\}
\end{multline}
(further equivalent characterizations will be given in Section~\ref{s:levset}). We also consider the subset $\cL^-(\alpha)$ where we assume that $v\in\cL^-(\alpha)$ is  Lyapunov \emph{backward} regular and has Lyapunov exponents $0,\pm\alpha$.

\begin{mtheo} \label{jen}
	Let $M$ be a connected compact Riemannian surface of nonpositive sectional curvature and negative Euler characteristic. Suppose that the geodesic flow on the unit tangent bundle is expansive but not Anosov.
     Let $\alpha_1$ be the positive Lyapunov exponent of the Liouville measure.
     
      Then for all $\alpha\in[0,\alpha_1]$ the level set $\cL^-(\alpha)\cap\cL^+(\alpha)$ is nonempty. More precisely, there is a dense set $D\subset T^1M$ such that for every $v\in D$ and for every $\varepsilon>0$ there exists a set $L^+\subset \cL^+(\alpha)\cap \sW^\u_\varepsilon(v)$ which satisfies
    \[
    \dim_{\rm H}(L^+) =1,
    \]
where $\sW^\u_\varepsilon(v)$ denote the set of points $w$ in the local unstable manifold of $v$ satisfying $\rho(w,v)\le\varepsilon$ (here $\rho$ denotes the distance function on $T^1M$ induced by the Riemannian metric). The analogous statement holds for the local stable manifold of $v$ and the set $\cL^-(\alpha)$.
\end{mtheo}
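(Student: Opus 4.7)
The unstable manifold $\sW^\u(v)$ is one-dimensional, so the target $\dim_{\rm H}(L^+)=1$ means \emph{full} unstable dimension. The natural approach is to approximate the desired level set, inside a small unstable arc near $v$, by a nested family of Cantor subsets of hyperbolic basic sets (horseshoes) $\Lambda_n$ whose unstable Hausdorff dimension satisfies $\dim_{\rm H}(\Lambda_n\cap\sW^\u)\to 1$. The companion work \cite{BurGel:14} is expected to supply such $\Lambda_n$ together with ergodic measures on $\Lambda_n$ whose positive Lyapunov exponent is close to any prescribed $\alpha\in[0,\alpha_1]$; the exponent $\alpha_1$ is attained as an upper bound via the Liouville measure. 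So my plan is: (i) inside each $\Lambda_n$, use multifractal formalism on the $1$-dimensional unstable foliation to produce a subset of $\cL^+(\alpha)$ with unstable dimension close to $\dim_{\rm H}(\Lambda_n\cap\sW^\u)$; (ii) transport a copy of that subset into $\sW^\u_\varepsilon(v)$ by local product/holonomy arguments; (iii) take $n\to\infty$ to reach dimension exactly $1$.

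\textbf{Multifractal analysis on the horseshoe.} For a conformal (one-dimensional unstable) hyperbolic basic set $\Lambda_n$, the Pesin--Weiss / Barreira--Saussol type formula gives
\[
\dim_{\rm H}\bigl(\{w\in\Lambda_n\cap\sW^\u(v_n): \text{fwd.~exponent}=\alpha\}\bigr)\;=\;\frac{h_{\mu}(g^1)}{\alpha}
\]
where $\mu$ is any ergodic equilibrium state on $\Lambda_n$ with Lyapunov exponent $\alpha$. Choosing $\mu$ to approximate the measure of maximal unstable dimension on $\Lambda_n$, this quotient tends to $\dim_{\rm H}(\Lambda_n\cap\sW^\u)$, and hence to $1$ after passing to the sequence $\Lambda_n$. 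The relevant generic points in this level set are Lyapunov forward regular: the Oseledets decomposition along the $g^t$-invariant hyperbolic set, together with the fact that vectors on the same local unstable manifold share forward exponents, upgrades the Birkhoff statement to Lyapunov forward regularity with exponents $0,\pm\alpha$ as required by the definition of $\cL^+(\alpha)$.

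\textbf{Dense set $D$ and localization.} Since the geodesic flow (restricted to its nonwandering set, which is all of $T^1M$ in this setting) is topologically transitive and hyperbolic periodic orbits are dense, the stable and unstable manifolds of the horseshoes $\Lambda_n$ are dense in $T^1M$. Take $D$ to be the union over $n$ of points whose unstable leaf intersects $\Lambda_n$ transversally (this is a dense $G_\delta$). For $v\in D$ and $\varepsilon>0$, a bracket/shadowing argument embeds a Cantor subset of $\Lambda_n\cap\sW^\u$ of the same Hausdorff dimension inside $\sW^\u_\varepsilon(v)$ via the bi-Lipschitz (indeed smooth, because the unstable foliation is $C^1$ along leaves) unstable holonomy. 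Call this embedded Cantor set $L^+$. Because $L^+$ consists of vectors whose full orbits are shadowed by orbits in $\Lambda_n$ and that lie on a common unstable leaf, forward regularity and the value $\alpha$ of the positive exponent are preserved. A mirror construction in $\sW^\s_\varepsilon(v)$ produces $L^-\subset\cL^-(\alpha)$, and the existence of a single point in $\cL^-(\alpha)\cap\cL^+(\alpha)$ follows by taking any point of the horseshoe itself (which is both forward and backward regular with the chosen exponent).

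\textbf{Main obstacle.} The delicate case is $\alpha=0$, where no genuine hyperbolic horseshoe can host ergodic measures of zero exponent, while the nontrivial flat strips of the rank $1$ surface support only center-type behavior of small topological complexity. I expect the fix to be a Moran-like construction concatenating very long singular segments (sliding along flat strips, where $\log\lVert dg^t|E^\u\rVert$ stays bounded) with short hyperbolic passages inside some $\Lambda_n$, with segment lengths tuned so that Birkhoff averages of the expansion rate go to $0$ while the combinatorial tree retains the full unstable dimension of $\Lambda_n$. Controlling Lyapunov \emph{regularity} (not only the value of the limit), and doing so uniformly enough that the Moran set embeds into $\sW^\u_\varepsilon(v)$ with dimension $\to 1$, is the technical heart of the argument; the endpoint $\alpha=\alpha_1$ is easier but still requires that the multifractal spectrum be continuous up to the right endpoint, which follows from the variational principle applied to equilibrium states of $-s\log\lVert dg^1|E^\u\rVert$.
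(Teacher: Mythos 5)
There is a genuine gap, and it is exactly at the point your own sketch flags as unresolved: the boundary exponent $\alpha=0$, which is the whole content of this paper (the cases $\alpha\in(0,\alpha_1]$ are already in~\cite{BurGel:14}). Your proposed fix --- a Moran construction concatenating long ``singular'' segments that slide along flat strips with short hyperbolic passages --- is not available here: under the expansivity hypothesis there are no flat strips at all (Flat Strip Theorem; see Section~\ref{sec:exp}), and more fundamentally no single horseshoe carries measures of zero exponent, so no construction internal to one $\Lambda_n$ can produce a set of exponent-$0$ points of dimension close to $1$. The paper's resolution is different in kind: it takes an increasing family of basic sets $\Lambda_\ell$ from~\cite{BurGel:14} with $\underline\chi(\Lambda_\ell)\to0$, uses the Legendre--Fenchel transform of the pressure function and the approximation $\cE_{G|\Lambda_\ell}(\alpha)\to\cE_{G|T^1M}(\alpha)=\alpha$ to pick equilibrium measures $\lambda_k$ with $\chi(\lambda_k)=\alpha_k\to0$ and $h(\lambda_k)/\chi(\lambda_k)\to1$, and then builds a \emph{non-invariant} ``bridging'' measure (via the Bowen--Walters symbolic model of the expansive flow) whose typical points interpolate the finite-time entropies and Birkhoff averages of $-\varphi^{(\u)}$ along the sequence $(\lambda_k)$; these points have exponent exactly $0$ while the lower pointwise dimension of the bridging measure is $\ge\liminf h_k/\alpha_k=1$, and the mass distribution principle concludes. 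Nothing in your proposal supplies this mechanism, and controlling regularity/the value of the limit for the concatenated orbits is precisely what the bridging construction (Propositions~\ref{p.pgfaweiss} and~\ref{p.pgfaweiss-b}, with the tempered distortion~\eqref{distorrrrtion} coming from expansivity) is for.

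Two further steps of your outline would also fail as written. First, the holonomy transport: you claim the Cantor set in $\Lambda_n\cap\sW^\u$ can be moved into $\sW^\u_\varepsilon(v)$ by a bi-Lipschitz (even smooth) unstable/stable holonomy, but for this non-Anosov rank~$1$ flow the invariant foliations and the local product structure are in general only H\"older with exponent bounded away from one (this is exactly the obstruction, citing~\cite{GerWil:99}, that prevents the paper from computing $\dim_{\rm H}\cL(0)$ for the product set), and H\"older maps do not preserve Hausdorff dimension; the paper avoids holonomies entirely by taking $D=\bigcup_\ell\Lambda_\ell$ and running the whole construction on the unstable leaf of $v$ itself through the symbolic cylinders. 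Second, within a fixed $\Lambda_n$ and for a fixed $\alpha$ you cannot ``choose $\mu$ to approximate the measure of maximal unstable dimension'': the exponent constraint forces $\dim_{\rm H}$ of the level set to be $\cE_{G|\Lambda_n}(\alpha)/\alpha$, which for fixed $n$ is typically strictly below $\dim_{\rm H}(\Lambda_n\cap\sW^\u)$; reaching $1$ requires the pressure approximation along the exhausting family $(\Lambda_\ell)$ from~\cite{BurGel:14}, not a choice of measure on a single horseshoe.
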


The multifractal analysis of various types of level sets, including Lyapunov exponents, in the case of flows started with Pesin and Sadovskaya~\cite{PesSad:01} in the case of a conformal flow on a uniformly hyperbolic
 set. Our results are essentially based on the thermodynamic formalism for equilibrium measures of H\"older continuous potentials for flows (see, in particular, Bowen and Ruelle~\cite{BowRue:75}). Similarly to the work in~\cite{PesSad:01} and~\cite{BowRue:75} we are going to replace the flow by an associated suspension flow over a subshift of finite type. Therefore we use the general approach by Bowen and Walters in~\cite{BowWal:72}, that allows a symbolic description for, in fact, \emph{any} fixed point-free flow, to model the non-hyperbolic geodesic flow on the unit tangent bundle. Using Bowen's method~\cite{Bow:73} we will derive additional regularity properties of the suspended flow on certain uniformly hyperbolic subsets.

The particular emphasis of this paper is on the exponent $\alpha=0$.
The hypothesis of Theorem~\ref{jen} for $\alpha\in(0,\alpha_1]$ is shown in~\cite{BurGel:14} in the general case dropping the assumption that the flow is expansive.
To facilitate our approach, we assume expansivity.
Work by Paternain~\cite{Pat:93} and Ghys~\cite{Ghy:84} implies that all expansive geodesic flows on a compact Riemannian surface $M$ are topologically equivalent, and in particular are topologically conjugate to an Anosov flow of a Riemannian metric with constant negative curvature (compare~\cite[Theorem 1.3]{LopRosRug:07}). However, this conjugacy is certainly not sufficient to derive any result about Hausdorff dimension (which is an invariant under bi-Lipschitz conjugacy only).

As our focus lies on vectors with Lyapunov exponents all being zero, let us briefly mention some further properties of geodesic curves that lack hyperbolic behavior. 
First recall that a vector $v\in T^1M$ has \emph{rank $1$} if there is no perpendicular parallel Jacobi field along the geodesic $\gamma_v$ tangent to $v$ (we provide more details in Section~\ref{ss:jf}). In the case of a surface (which we consider here), a vector has rank $1$ if and only if it is tangent to some geodesic that eventually passes through some point on the manifold where the Gaussian curvature is negative. Note that the set $\cR\subset  T^1M$  of all vectors tangent to rank $1$ geodesics is invariant under the geodesic flow and open and dense in $T^1M$. The complement $\cH\eqdef T^1M\setminus \cR$ is invariant and closed and nonempty since we assume that $G|_{T^1M}$ is not Anosov. 
As such, any recurrent geodesic that is tangent to a rank $1$ vector shows some  sensitive dependence on initial conditions similar to geodesics in negatively curved manifolds, while a geodesic tangent to a vector of higher rank in the complement $\cH=T^1M\setminus \cR$  lacks hyperbolic behavior and behaves like a geodesic in a flat space.   
Notice that the fact that there exist parallel perpendicular Jacobi fields along a geodesic tangent to $v\in  \cH$ results in the fact that the largest Lyapunov exponent, and hence all Lyapunov exponents, at $v$ are zero and thus that $\cH\subset\cL(0)$. 

Topological and measure theoretic properties of the sets $\cR$ and $\cH$ are the subject of numerous investigations. In particular, it is conjectured,  that $\cR$ has full Liouville measure and, hence, the geodesic flow is ergodic. Although this is true for all known examples, at the present state of the art it remains unproved in the general case. 
Using compactness and invariance of the set $\cH$ and the variational principle for the entropy, it follows immediately from the Ruelle inequality that the topological entropy satisfies $h({G}|_\cH)=0$. In fact, by~\cite[Theorem 1.3]{BurGel:14} we have the stronger result that for every $\alpha\in[0,\alpha_1]$ we have%
\footnote{Notice that the sets $\cL(\alpha)$ are in general non-compact and accordingly, we have to use the general concept of topological entropy introduced by Bowen (see for example~\cite{Pes:97}).}
\[
	h(G|_{\cL(\alpha)})=\alpha.
\]

We are aware of the fact that our hypothesis are quite specific. 
On the one hand we restrict our considerations to a surface $M$ since at the present state of the art dimension analysis is essentially restricted to conformal flows and hence our approach does not permit formulas for the Hausdorff dimension of level sets (even in the Anosov case) for higher-dimensional manifolds.
On the other hand, even though the symbolic coding of the flow in~\cite{BowWal:72} applies to any fixed point-free flow, we require expansivity (see Section~\ref{sec:exp}). We do so mainly in order to have a tempered distortion property (see~\eqref{distorrrrtion}) that is an essential ingredient in our approach. 
Note that the geodesic flow fails to be expansive if there exist bi-asymptotic geodesics in its Riemannian covering. 
Certainly, if there exist two (distinct) geodesics that are edges of a flat strip (an isometrically and totally embedded copy of $[0,r]\times\bR$, $r>0$) $\cH$ and thus $\cL(0)$ contain a set of vectors that has dimension at least two. But not much else is known about the topological and dynamical properties of these sets.  

Finally, not much else is known about fractal properties of the level set
\[
	\cL(0)
	\eqdef\cL^-(0)\cap\cL^+(0).
\]
By the methods presented in this paper, one can show that it locally contains a subset which is the (continuous image of a direct) product of the two sets $L^+$ and $L^-$ claimed in Theorem~\ref{jen} defined in terms of the local product structure (see Corollary~\ref{cor:product}). To conclude about the Hausdorff dimension of such a product, the only obstacle so far is the regularity of this structure which, in general, can be only H\"older continuous with some H\"older exponent away from one (see, for example~\cite{GerWil:99} and references therein). We state the following conjecture. 
\begin{conjecture}
	Under the hypotheses of Theorem~\ref{jen} we have
	 $\displaystyle \dim_{\rm H}(\cL(0))=3$.
\end{conjecture}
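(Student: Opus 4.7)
The plan is to derive the conjectured identity from Theorem~\ref{jen} by combining two leafwise dimension estimates through the local product structure and the flow direction. Fix $v\in D$ and $\varepsilon>0$, and let $L^+\subset\cL^+(0)\cap\sW^\u_\varepsilon(v)$ together with the symmetric set $L^-\subset\cL^-(0)\cap\sW^\s_\varepsilon(v)$ be the sets of Hausdorff dimension one produced by Theorem~\ref{jen}.

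Consider the local product map
\[
    \Phi\colon\sW^\s_\varepsilon(v)\times\sW^\u_\varepsilon(v)\times(-\varepsilon,\varepsilon)\longrightarrow T^1M,\qquad
    \Phi(w^\s,w^\u,t)\eqdef g^t\bigl([w^\s,w^\u]\bigr),
\]
where $[\cdot,\cdot]$ denotes the local stable/unstable bracket, which is well defined on any hyperbolic neighborhood of $v$. The first step (essentially Corollary~\ref{cor:product} together with flow invariance) is to check that $\Phi(L^-\times L^+\times(-\varepsilon,\varepsilon))\subset\cL(0)$. This reduces to three invariance properties that I would extract from the characterizations in Section~\ref{s:levset}: forward Lyapunov regularity and the forward exponent are preserved along local stable manifolds; backward regularity and the backward exponent are preserved along local unstable manifolds; and $\cL^\pm(0)$ are flow-invariant. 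Concretely, for $w^\s\in L^-$ and $w^\u\in L^+$, the point $p\eqdef[w^\s,w^\u]\in\sW^\s_{\rm loc}(w^\u)\cap\sW^\u_{\rm loc}(w^\s)$ inherits forward zero exponents from $w^\u$ and backward zero exponents from $w^\s$, so $p\in\cL(0)$; flowing $p$ by $g^t$ preserves this.

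The Marstrand-type product inequality for Hausdorff dimension, iterated over the three factors, then gives
\[
    \dim_{\rm H}\bigl(L^-\times L^+\times(-\varepsilon,\varepsilon)\bigr)\ge\dim_{\rm H}(L^-)+\dim_{\rm H}(L^+)+1=3,
\]
so \emph{if} $\Phi$ were bi-Lipschitz on the relevant domain, one would conclude $\dim_{\rm H}(\cL(0))\ge 3$, and the reverse inequality is trivial since $\cL(0)\subset T^1M$.

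The hard part is exactly the regularity of $\Phi$, as already pointed out in the paragraph preceding the conjecture. The factor $g^t$ is smooth, so the obstacle sits in the bracket, i.e.\ in the holonomies between stable and unstable transversals defining the product coordinates. In general these holonomies are only H\"older continuous with some exponent $\beta\in(0,1)$ bounded away from $1$, which would yield only the weaker bound $\dim_{\rm H}(\cL(0))\ge 1+2\beta<3$. To push $\beta$ up to $1$ I would exploit that $M$ is a surface: for geodesic flows on compact negatively curved surfaces the weak stable and weak unstable foliations are classically known to be of class $C^1$, in which case the holonomies are Lipschitz. The substantive step needed to settle the conjecture is therefore to extend this codimension-one $C^1$-regularity of the invariant foliations from the Anosov case to the maximal hyperbolic subsets on which $\Phi$ is defined in the present nonpositively curved, expansive, not-Anosov setting, and I expect this regularity step, rather than the dimension bookkeeping, to be the essential obstacle.
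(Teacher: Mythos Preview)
This statement is presented in the paper as a \emph{conjecture}; there is no proof to compare against. The paragraph immediately preceding it sketches exactly the approach you describe --- form the product of $L^-$, $L^+$, and the flow direction, map it into $\cL(0)$ via the local product structure (Corollary~\ref{cor:product}), and push the bound $1+1+1=3$ through the bracket map --- and then names the obstruction: the local product structure is in general only H\"older with exponent bounded away from~$1$, citing~\cite{GerWil:99}.

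Your outline reproduces this heuristic and honestly isolates the same gap. The suggested fix, however, does not close it. The classical $C^1$ regularity of the horocycle foliations is a theorem for \emph{Anosov} geodesic flows on negatively curved surfaces, whereas the standing hypothesis here is that the curvature is only nonpositive and the flow is expansive but explicitly \emph{not} Anosov. No $C^1$ (or even Lipschitz) regularity of the invariant foliations is known in this regime, and~\cite{GerWil:99} is cited precisely because the H\"older exponent of such foliations can be strictly less than~$1$. So your last paragraph is not a proof step; it is a restatement of why the problem is open.

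A secondary caution: your assertion that forward regularity and forward exponents are preserved along local stable leaves is not automatic here. In this non-Anosov setting points on a common stable leaf are only known to remain at non-increasing (not necessarily vanishing) distance in forward time, which does not by itself force equal forward Birkhoff averages of~$\varphi^{(\u)}$. The paper establishes the inclusion into $\cL(0)$ symbolically in the proof of Corollary~\ref{cor:product}, where the bracket point literally carries the forward coding of one factor and the backward coding of the other, rather than via the geometric claim you invoke.
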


The paper is organized as follows. In Section~\ref{s:levset} we recall some preliminaries on geometry and Lyapunov exponents and exploit the conformal structure of the geodesic flow in order to derive equivalent characterizations of the level set~\eqref{neeww}. Section~\ref{sec:prelim} recalls some thermodynamic preliminaries.
In Section~\ref{s:sus} we consider a symbolic description of the flow $G|_{T^1M}$ by means of a suspension flow over a finite family of local cross sections. This family will be used as a reference for all further steps. Further, we take a family of basic sets that fill the non-hyperbolic set $T^1M$ and symbolically model each of them over the fixed family of cross sections. 
In order to analyze the exponent at the spectrum boundary $\alpha=0$, in Section~\ref{sec:bridge} we introduce the concept of a bridging measure on an abstract shift space. For such a measure typical points have prescribed Birkhoff averages (and in particular can characterize orbits with Lyapunov exponent $\alpha=0$) and prescribed limits of finite-time entropies that together allow for an estimate of its local Hausdorff dimension. Note that such a bridging measure is in general not invariant. Notice that any invariant measure of non-zero entropy and hence with non-zero exponents would not be able to capture properties of $\cL(0)$.
Finally, we conclude the proof of Theorem~\ref{jen} in Section~\ref{sec:locate}.

\section{Characterization of level sets}\label{s:levset}

In this section we provide a number of preliminary properties of the geodesic flow. Our main aim is to derive the following equivalent characterizations of any level set~\eqref{neeww}. Here the subspace $F^\u_v\subset T_vT^1M$ will be defined in Section~\ref{sec:invlinfie}, the potential $\varphi^{(\u)}$ will be defined in~\eqref{phidef}, and $\lVert L\rVert$ and $[L]=\lVert L^{-1}\rVert^{-1}$ denote the norm and the inverse norm of a linear operator $L$, respectively.

\begin{proposition}\label{pro:whams}
	For any $\alpha\ge0$ we have 
\[\begin{split}
		\cL^\pm(\alpha) 
		&=\Big\{v\in T^1M\colon
		\lim_{t\to\pm\infty} -\frac{1}{t}\int_0^t\varphi^{(\u)}(g^s(v))\,ds=\alpha\Big\}\\
		&= \Big\{v\in T^1M\colon 
 		\lim_{t\to\pm\infty}\frac{1}{t}\log\,\phi(t,v) =\alpha \Big\},
\end{split}\]
where $\phi(t,v)$ is either $\lVert d{g}^t_v\rVert, \lVert dg^t|_{F_v}\rVert, \lVert dg^t|_{F^\u_v}\rVert, [ dg^t|_{F_v}]^{-1},$ or $[ d{g}^t_v]^{-1}$. 	
\end{proposition}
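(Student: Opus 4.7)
The plan is to reduce all five equivalent characterizations to the single asymptotic quantity $\tfrac{1}{t}\log\lVert dg^t|_{F^\u_v}\rVert$, which the potential $\varphi^{(\u)}$ from~\eqref{phidef} is designed to encode. Since $F^\u_v$ is one-dimensional and $dg^t$-invariant, the fibrewise derivative is multiplicative along the flow, so that with $\varphi^{(\u)}(v)=-\tfrac{d}{dt}\big|_{t=0}\log\lVert dg^t|_{F^\u_v}\rVert$ the fundamental theorem of calculus yields
\[
\int_0^t\varphi^{(\u)}(g^sv)\,ds = -\log\lVert dg^t|_{F^\u_v}\rVert \qquad (v\in T^1M,\ t\in\bR).
\]
This identity converts the Birkhoff-average description into $\tfrac{1}{t}\log\lVert dg^t|_{F^\u_v}\rVert\to\alpha$ as $t\to\pm\infty$ and so reduces the proposition to the comparison between the five operator norms.

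Next, I would use the invariant splitting recalled in Section~\ref{sec:invlinfie},
\[
T_vT^1M = E^\c_v\oplus F^\s_v\oplus F^\u_v,
\]
where $E^\c_v$ is the flow direction (on which $dg^t$ acts as an isometry, contributing Lyapunov exponent $0$) and $F_v=F^\s_v\oplus F^\u_v$ is a $dg^t$-invariant symplectic plane whose summands are the one-dimensional subbundles of stable and unstable perpendicular Jacobi fields. Preservation of the contact form on $F_v$ forces the two Lyapunov exponents on $F_v$ to sum to zero, i.e.\ to be opposite. With this symmetry in place, $\lVert dg^t|_{F_v}\rVert$ and $\lVert dg^t_v\rVert$ are asymptotically realized on the most expanding one-dimensional summand, whereas $[dg^t|_{F_v}]^{-1}$ and $[dg^t_v]^{-1}$ are realized on the most contracting one, and in every case $\tfrac{1}{t}\log\phi(t,v)$ has, for $t\to\pm\infty$, the same limit as $\tfrac{1}{t}\log\lVert dg^t|_{F^\u_v}\rVert$.

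To match this common limit with Lyapunov forward (respectively backward) regularity at $v$ with exponents $0,\pm\alpha$, I would observe that the one-dimensionality of each of the three summands reduces the Oseledets-type statement at $v$ to three scalar limits, of which the one on $F^\u_v$ determines the other two: the exponent on $E^\c_v$ is $0$ by flow invariance, and the exponent on $F^\s_v$ is $-\alpha$ by the symplectic constraint. The resulting triple sums to zero, matching the vanishing logarithmic Jacobian of $dg^t$, and full Lyapunov regularity follows.

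The main technical obstacle lies on the non-hyperbolic set $\cH$ and at the boundary value $\alpha=0$, where $F^\s_v$ and $F^\u_v$ are singled out not by an exponential dichotomy but only through the asymptotics of perpendicular Jacobi fields recalled in Section~\ref{ss:jf}. One must verify that these are genuinely $dg^t$-invariant one-dimensional subbundles along every orbit in $T^1M$ (including those in $\cH$) and that the norm comparisons above hold with the uniformity needed to preserve the $1/t$-scaled limits. This is precisely where the tempered distortion property~\eqref{distorrrrtion} will be needed, ensuring that the argument runs uniformly across the entire spectrum $[0,\alpha_1]$.
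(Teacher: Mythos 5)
Your first reduction is exactly the one the paper uses: since $F^\u$ is one-dimensional and $dg^t$-invariant, $\log\lVert dg^t|_{F^\u_v}\rVert=-\int_0^t\varphi^{(\u)}(g^s(v))\,ds$, which is the content of~\eqref{eq:Lyapuu}. The gap lies in your second step. The splitting $T_vT^1M=E^\c_v\oplus F^\s_v\oplus F^\u_v$ is simply not available at every $v$: since the flow is assumed not Anosov, there exist vectors (all of $\cH$, and in general others) at which $F^\s_v\cap F^\u_v\ne\{0\}$, i.e.\ $F^\s_v=F^\u_v$, so $F_v$ is not the direct sum of the stable and unstable lines and there is no ``most expanding one-dimensional summand'' realizing $\lVert dg^t|_{F_v}\rVert$. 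Even where the splitting does exist, passing from the growth rate along the line $F^\u_v$ to the growth rate of the full norm requires a uniform lower bound on the angle between $F^\s$ and $F^\u$, and this is precisely what degenerates off hyperbolic sets; area preservation on $F_v$ alone does not exclude shear-type behavior in which an invariant line grows subexponentially while $\lVert dg^t|_{F_v}\rVert$ grows exponentially. This is the heart of the proposition, since the case it is really needed for is $\alpha=0$ and the vectors in $\cH$.

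The paper closes this gap by a different mechanism than the one you point to: the tempered distortion property~\eqref{distorrrrtion} is a symbolic statement about Birkhoff sums on cylinders used only later, and plays no role here. What is used instead is the invariant cone field. By Corollaries~\ref{cor:coninv} and~\ref{cor:invconFu}, $F^\u_v$ lies in $\cC^k_v$ and $dg^t$ (for $t\ge\tau$) maps $\cC_v$ into $\cC^{\kappa}_{g^t(v)}$, so by the Feng--Shmerkin-type comparison (Lemma~\ref{lem:justin} and Corollary~\ref{cor:dis}) every unit vector in the cone --- in particular the unstable direction --- realizes $\lVert dg^t|_{F_v}\rVert$ up to a uniform multiplicative constant $C(\tau)$, at \emph{every} $v\in T^1M$, with no transversality of $F^\s$ and $F^\u$ needed. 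Combining this with $\lVert dg^t|_{F_v}\rVert=\lVert dg^t_v\rVert$ from~\eqref{eq:equinorms} (the flow direction is a neutral singular direction), with $\lVert dg^t_v\rVert[dg^t_v]=1$ from~\eqref{eq:premeas}, and with the symmetry $g^t(v)=-g^{-t}(-v)$, $F^\u_{-v}=F^\s_v$ for the inverse-norm and backward cases, yields all five characterizations simultaneously. Your symplectic-summand argument is valid on basic sets, where angles are uniformly bounded below, but it cannot be uniformized over all of $T^1M$; replacing it by the cone argument is not a technical refinement but the essential missing step.
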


To prove the above, we will essentially exploit the conformal structure of the flow on a three-dimensional manifold (see also~\cite{PesSad:01} for a discussion of conformal flows) and derive an almost multiplicativity property for the dynamical cocycle of the flow.
The proof of Proposition \ref{pro:whams} will be completed at the end of Section~\ref{ss:almost}.

\subsection{Geometry}\label{ss:jf}

We refer particularly to \cite[Chapter IV]{Bal:95} or to~\cite{Ebe:73b,Ebe:01} for this subsection. 
The geodesic flow $G=(g^t)_{t\in\bR}$ acts on the tangent bundle $TM$ by $g^t(v)=\dot\gamma_v(t)$, where $\gamma_v$ denotes the geodesic curve determined by $\dot\gamma_v(0)=v$. Note that 
\begin{equation}\label{eq:forbac}
	g^t(v)
	= -g^{-t}(-v).
\end{equation}

Given a vector $v\in T_pM$, we identify $T_vTM$ with  $T_pM\oplus T_pM$ via the isomorphism
\[
	\Psi\colon
	\xi\mapsto(d\pi(\xi),C(\xi)),
\]
where $\pi\colon TM\to M$ denotes the canonical projection and $C\colon TTM\to TM$ denotes the connection map defined by the Levi Civita connection. The components of the direct sum $TM\oplus TM$ are also referred to as \emph{horizontal} and \emph{vertical subspaces} (see for example~\cite{Bal:95,Ebe:73b}).

A \emph{Jacobi field} $J$ along a geodesic $\gamma\colon t\mapsto \gamma(t)\in TM$ is a vector field $J\colon t\mapsto J(t)\in T_{\gamma(t)}TM$ that satisfies the Jacobi equation
\[
 	J''(t) +R(J(t),\dot\gamma(t))\dot\gamma(t) = 0,
\]
where $R$ denotes the Riemannian curvature tensor of $M$ and $'$ denotes covariant differentiation along $\gamma$.  A Jacobi field $J$ along a geodesic $\gamma$ with $\dot\gamma(0)=v$  is uniquely determined by its initial conditions $(J(0), J'(0))\in T_{\pi v}M\oplus T_{\pi v}M$. Indeed, given $\xi\in T_vTM$, denote by $J_\xi$ be the unique Jacobi field along $\gamma_v$ such that $J_\xi(0)=d\pi_v(\xi)$ and $J_\xi'(0)=C_v(\xi)$.
Jacobi fields give a geometric description of the derivative of the geodesic flow. Given $\xi\in T_vTM$, we have $\Psi(dg^t_v(\xi))=(J_\xi(t),J_\xi'(t))$.

The Riemannian metric on $M$ lifts to the {\em Sasaki metric} on $TM$ induced by the scalar product structure, for every $\xi,\eta\in T_vTM$ defined by
\[
 \llangle \xi,\eta\rrangle
 = \langle d\pi_v (\xi),d\pi_v (\eta)\rangle_{\pi v}
 	+ \langle C_v(\xi),C_v(\eta)\rangle_{\pi v}.
 \]
This metric induces a distance function on $T^1M$ which we denote by $\rho$.

Recall that the geodesic flow leaves invariant the unit tangent bundle $T^1M$. Under the above defined isomorphism we have $\Psi(T_vT^1M)= T_pM\oplus v^\perp$, where $v^\perp$ is the subspace of $T_pM$ orthogonal to $v\in T^1M$. 

Denote by $V\subset TTM$ the vector field of the geodesic flow.
Note that for $v\in T^1M$ and $\xi\in T_vT^1M$ we have $\llangle\xi,V(v)\rrangle=0$ if and only if $\langle J_\xi(0),v\rangle_{\pi v}=0=\langle J_\xi'(0),v\rangle_{\pi v}$ if and only if $\langle J_\xi(t),\dot\gamma_v(t)\rangle_{\pi g^t(v)}=0$ for all $t\in \bR$ and hence 
\[
	\llangle dg^t_v(\xi),V(g^t(v))\rrangle=0
	\quad\text{ for all }\quad t\in\bR.
\]	 
Any such Jacobi field is called  an {\em orthogonal Jacobi field}. In words,  the set of Jacobi fields $J$ along a geodesic $\gamma$ such that $J(0)$ and $J'(0)$ are orthogonal to $\dot\gamma(0)$ is exactly the set of Jacobi fields $J$ such that $J(t)$ is normal to $\dot\gamma(t)$  for all $t$.  In particular the vector space of orthogonal Jacobi fields has dimension $2$.
The set of orthogonal Jacobi fields can be identified with the subbundle of $TT^1M$ whose fiber over $v$ is $v^\perp\oplus v^\perp\subset T_pM\oplus T_pM$. This fiber is the orthogonal complement in $T_vT^1M\simeq T_pM\oplus v^\perp$ of the subspace spanned by the vector field that generates the geodesic flow.

Given $v\in T^1M$ denote
\[
	F_v\eqdef 
	\{\xi\in T_vT^1M\colon\llangle\xi,V(v)\rrangle=0\}
	.
\]
By the above, we have $F_v=\Psi^{-1}(v^\perp\oplus v^\perp)$ and $dg^tF_v=F_{g^t(v)}$ for all $v\in T^1M$ and $t\in\bR$. 

In Section~\ref{ss:almost} we will further study the norm of the linearized flow, where we use the usual definition of the norm of a linear operator  considering the family of scalar products induced by the Sasaki metric:
\[
	\lVert dg^t_v\rVert
	\eqdef \max_{\xi\in T_vT^1M,\,\,\llangle\,\xi,\xi\,\rrangle=1}
		\llangle dg^t_v(\xi),dg^t_v(\xi)\rrangle^{1/2}.
\]

\subsection{Expansivity}\label{sec:exp}

A continuous flow $\psi_{t}\colon X \longrightarrow  X$ without singular points on a  metric space $(X,d)$ is \emph{expansive}%
\footnote{Observe that, in fact, this definition of expansivity is slightly stronger than in~\cite{BowWal:72}, however it appears naturally in the context of expansive geodesic flows (see, for example,~\cite{Rug:94,Rug:97}). In the setting of the geodesic flow  in this paper, both definitions are equivalent.} 
if there exists $\varepsilon>0$ such that for every $x \in X$ and 
for every $y \in X$ for which there exists a continuous surjective function $\rho\colon \bR \longrightarrow \bR$ with $\rho(0)=0$ satisfying
\[
 	d(\psi_{t}(x), \psi_{\rho(t)}(y)) \leq \varepsilon
\]	
for every $t \in {\mathbb R}$ we have $\psi_{t(y)}(x)=y$ for some $\lvert t(y) \rvert <\varepsilon$. We call such $\varepsilon$ an \emph{expansivity constant}.

Let us argue that so-called flat strips in our setting are the only obstruction to expansivity. If the geodesic flow fails to be expansive then there exist two complete geodesics which stay in bounded Hausdorff distance.  It hence follows from the Flat Strip Theorem \cite{EbeONe:73} that they are edges of a flat strip, that is, an isometrically and totally embedded copy of $[0,r]\times\bR$, for some $r>0$.  
This implies that a connected compact  Riemannian surface of nonpositive curvature and negative Euler characteristic manifold has no flat strip if and only if the geodesic flow is expansive.

\subsection{Invariant cone fields}

We introduce the following cone
\begin{equation}\label{def:conecC}
	\cC_v\eqdef \{\xi\in F_v\colon \langle d\pi_v(\xi),C_v(\xi)\rangle_{\pi v}\ge0\}
	\subset F_v.
\end{equation}
The cone field $\{\cC_v\}_{v\in T^1M}$ is forward invariant under the geodesic flow (though not everywhere strictly invariant), that is, it satisfies
\begin{equation}\label{e.cone}
	dg^t_v(\cC_v)\subset \cC_{g^t(v)}
	\quad\text{ for all }\quad v\in T^1M,\quad  t\ge 0.
\end{equation}
 Indeed, we can express an orthogonal Jacobi field as $J(t)=y(t)e_\perp(t)$, where $y$ is a scalar function and $e_\perp$ is a continuous unit vector field along $t\mapsto \gamma(t)$ that is orthogonal to $\dot\gamma$. Then the Jacobi equation reads as
\begin{equation}\label{eq:Jacobi-simp}
	y''(t)+K(\gamma(t))y(t)=0,
\end{equation}
where $K$ denotes the \emph{sectional curvature}. It follows that $u\eqdef y'/y$ satisfies the \emph{Riccati equation}
\[
	u'(t)+u(t)^2+K(\gamma(t))=0.
\]
Now observe that the cone $\cC_v$ is determined by solutions $y$ of~\eqref{eq:Jacobi-simp} satisfying $y(0)y'(0)\ge0$. Note that then
\[
	\big(y\,y'\big)'(t)=-K(\gamma_v(t))\,y^2(t)+\big(y'(t)\big)^2\ge0.
\]
Thus, $t\mapsto y(t)\,y'(t)$ is non-decreasing as $t$ increases and~\eqref{e.cone} follows.

In fact, one can prove a stronger result. Given $k>0$, for every $v\in T^1M$ consider the following cone
\begin{equation}\label{def:conecCk}
	\cC^k_v
	\eqdef \big\{\xi\in \cC_v\colon \lVert d\pi_v(\xi)\rVert_{\pi v} \le k\lVert C_v(\xi)\rVert_{\pi v}\big\}
\end{equation}
and note that $\cC^k_v\backslash\{0\}$ is contained in the interior of $\cC_v$.
By~\cite[Proposition 2.7]{Ebe:73b}, if $k>0$ is such that $K>-k^2$ then any solution of~\eqref{eq:Jacobi-simp} with $y(0)=0$ satisfies  
\[
	\lvert y'(t)\rvert \le k\coth(kt)\,\lvert y(t)\rvert
\]
for every $t>0$ and hence for every $t\ge 0$ we have
\begin{equation}\label{eview}
	dg_v^t(\cC_v)
	\subset \cC^{k\coth(kt)}_{g^t(v)}
	\subset \cC_{g^t(v)}.
\end{equation}
Hence, to prove the following corollary, it suffices to take $\kappa\eqdef k\coth(\tau k)$.

\begin{corollary}\label{cor:coninv}
	For every $\tau>0$ there exists $\kappa=\kappa(\tau)>0$ such that for every $v\in T^1M$ and every $t\ge \tau$ we have
\[
	dg^t_v(\cC_v)
	\subset \cC^\kappa_{g^t(v)}.
\]	
\end{corollary}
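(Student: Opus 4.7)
The plan is essentially to package equation~\eqref{eview} with the uniformity over $t\ge\tau$ made explicit. The author has in fact announced the value of $\kappa$ just before the statement: I would set $\kappa\eqdef k\coth(\tau k)$, where $k>0$ is chosen (using compactness of $M$) so that the sectional curvature satisfies $K>-k^2$ everywhere. This is precisely the hypothesis under which~\cite[Proposition 2.7]{Ebe:73b} yields the chain of inclusions~\eqref{eview}.

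The only remaining ingredients are two monotonicity observations. First, the scalar function $t\mapsto k\coth(kt)$ is strictly decreasing on $(0,\infty)$, so for every $t\ge\tau$,
\[
	k\coth(kt)\;\le\;k\coth(k\tau)\;=\;\kappa.
\]
Second, from the definition~\eqref{def:conecCk} it is immediate that the cones are monotone in their parameter: if $\kappa_1\le\kappa_2$, then $\cC^{\kappa_1}_w\subset\cC^{\kappa_2}_w$ for every $w\in T^1M$. Combining these two facts with~\eqref{eview} applied at time $t\ge\tau$,
\[
	dg^t_v(\cC_v)\;\subset\;\cC^{k\coth(kt)}_{g^t(v)}\;\subset\;\cC^{\kappa}_{g^t(v)},
\]
which is the desired inclusion.

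I do not foresee any real obstacle: the corollary is a bookkeeping consequence of the invariance estimate~\eqref{eview} already established from the Riccati equation, and the entire content is the observation that $k\coth(kt)$ attains its supremum on $[\tau,\infty)$ at the left endpoint. The only mild care needed is to remark that $k$ can be chosen uniformly on $M$ (compactness), and that the inclusion $\cC^a_w\subset\cC^b_w$ for $a\le b$ follows verbatim from~\eqref{def:conecCk}; both are immediate.
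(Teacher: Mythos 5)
Your proof is correct and follows exactly the paper's route: the paper likewise sets $\kappa=k\coth(\tau k)$ and deduces the corollary from the inclusion~\eqref{eview}, with the monotonicity of $t\mapsto k\coth(kt)$ and of the cones $\cC^{\kappa}_w$ in $\kappa$ left implicit. Your write-up simply makes those two elementary observations explicit, which is fine.
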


\subsection{Invariant vector bundles}\label{sec:invlinfie}

(Orthogonal) un-/stable Jacobi fields provide a convenient geometric way of describing the vector
 bundles that by Oseledec theorem correspond to nonpositive (nonnegative) Lyapunov exponents of the geodesic flow on the unit tangent bundle. 
As curvature is nonpositive, the function $t\mapsto \lVert J(t)\rVert$ is convex~\cite[IV, Lemma 2.3]{Bal:95}.
 A Jacobi field $J$ along a geodesic is called {\em center stable} (resp.~{\em center unstable}) if $\lVert J(t)\rVert$ is bounded for all $t \geq 0$ (resp.  bounded for all $t \leq 0$). 
Using the isomorphism $\Psi$ described in Section~\ref{ss:jf}, let $J^\cs$ (resp. $J^\cu$) denote the $2$-dimensional subspace of center stable (resp. center unstable) Jacobi fields  and introduce the subspaces
\begin{equation}\label{eq:defFcuFcs}
	F^\cs_v\eqdef\{\xi\in T_vT^1M\colon J_\xi\in J^\cs\},\quad
	F^\cu_v\eqdef\{\xi\in T_vT^1M\colon J_\xi\in J^\cu\}.	
\end{equation}
A Jacobi field $J$ along a geodesic is called {\em  stable} (resp.~{\em  unstable}) if it is a center stable (resp. center unstable) Jacobi field and if it is orthogonal. 
Denoting by $J^\s$ (resp. $J^\u$) the stable (resp. unstable) Jacobi fields we introduce the subspaces
\begin{equation}\label{eq:defFuFs}
	F^\s_v\eqdef\{\xi\in T_vT^1M\colon J_\xi\in J^\s\},\quad
	F^\u_v\eqdef\{\xi\in T_vT^1M\colon J_\xi\in J^\u\}.	
\end{equation}
Each of these subspace is in $v^\perp\oplus v^\perp$ and is one-dimensional. 

The vector bundles $F^\ast\colon v\in T^1M\mapsto F^\ast_v\subset T_vT^1M$, $\ast\in\{\s,\cs,\u,\cu\}$ obtained in this way are invariant, that is, for every $v\in T^1M$ and every $t\in\bR$ we have
\[
	dg^t_vF^\ast_v=F^\ast_{g^t(v)},
	\quad\ast\in\{\s,\cs,\u,\cu\},
\]
 and continuous (but rarely have higher regularity).  
The subspace $F^\s_v$ (resp. $F^\u_v$) coincides with the space of vectors $\xi\in v^\perp\oplus v^\perp\subset T_vT^1M$ such that $\lVert dg^t_v(\xi)\rVert$ is uniformly bounded for all $t\ge0$ (resp. bounded for all $t\le0$). 
Note that $F^\u_v=F^\s_{-v}$.

A nonzero vector  $\xi$ belongs to $F^\cs_v\cap F^\cu_v$  if and only if $t\mapsto \|J_\xi(t)\|$ is constant (as a convex bounded map), or in other words if and only if  the function $t\mapsto \lVert dg^t_v(\xi)\rVert$ is constant. One says in this case that $J_\xi$ is a {\em parallel Jacobi field} along $\gamma_v$.
Since we assume that  $M$ is a surface, $F^\cs_v\cap F^\cu_v$ is nontrivial
 if and only if $F^\cs_v=F^\cu_v$, that is if and only if the sectional curvature along $\gamma_v$ is everywhere zero.
In general, both subbundles will have nonzero intersection at some vectors $v\in T^1M$.
 In fact, the geodesic flow is Anosov if and only if the intersection is zero at \emph{any} vector~\cite{Ebe:73b}. This is the case if, for example, the sectional curvature is strictly negative.
 
Invariance of the distribution $F^\u$ together with Corollary~\ref{cor:coninv} imply the following result. 

\begin{corollary}\label{cor:invconFu}
	Given $k>0$ satisfying $K>-k^2$, for every $v\in T^1M$ we have $F^\u_v\in \cC^k_v$.
\end{corollary}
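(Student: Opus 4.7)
The plan is to combine the invariance of the unstable distribution $F^\u$ with the quantitative cone contraction already established in~\eqref{eview}, and then let the evolution time tend to infinity.

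First I would verify the soft inclusion $F^\u_v\subset \cC_v$. Given $\xi\in F^\u_v$, write the associated orthogonal Jacobi field as $J_\xi(t)=y(t)e_\perp(t)$ along $\gamma_v$. Because the curvature is nonpositive, $t\mapsto \lVert J_\xi(t)\rVert$ is convex, and since $\xi$ is unstable it stays bounded for $t\le 0$; a bounded convex function on $(-\infty,0]$ is necessarily non-decreasing, so $y(0)y'(0)\ge 0$. Translating this through $\Psi$, $\langle d\pi_v(\xi),C_v(\xi)\rangle_{\pi v}=y(0)y'(0)\ge 0$, i.e.\ $\xi\in \cC_v$.

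Second, by the invariance $dg^t_{g^{-t}(v)}F^\u_{g^{-t}(v)}=F^\u_v$ valid for every $t>0$, together with the inclusion $F^\u_{g^{-t}(v)}\subset \cC_{g^{-t}(v)}$ established in the first step, I apply~\eqref{eview} at the base point $g^{-t}(v)$ with evolution time $t$ to conclude
\[
F^\u_v
=dg^t_{g^{-t}(v)}F^\u_{g^{-t}(v)}
\subset dg^t_{g^{-t}(v)}\cC_{g^{-t}(v)}
\subset \cC^{k\coth(kt)}_v
\quad\text{for every }t>0.
\]
Concretely, every $\xi\in F^\u_v$ satisfies $\lVert d\pi_v(\xi)\rVert_{\pi v}\le k\coth(kt)\,\lVert C_v(\xi)\rVert_{\pi v}$ for all $t>0$.

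Third, I let $t\to\infty$: since $\coth(kt)\to 1$, passing to the limit in the above inequality yields $\lVert d\pi_v(\xi)\rVert_{\pi v}\le k\,\lVert C_v(\xi)\rVert_{\pi v}$, which together with $\xi\in\cC_v$ exactly places $\xi$ in $\cC^k_v$, proving the corollary. The only subtle step is the soft inclusion $F^\u_v\subset \cC_v$, which relies essentially on the nonpositivity of curvature through convexity of $\lVert J_\xi\rVert$; the rest is a routine limit argument built on the quantitative cone estimate~\eqref{eview} already proved.
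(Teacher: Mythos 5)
Your proof is correct and follows essentially the same route the paper intends: invariance $F^\u_v=dg^t_{g^{-t}(v)}F^\u_{g^{-t}(v)}$ combined with the quantitative cone estimate~\eqref{eview} (equivalently Corollary~\ref{cor:coninv}) and the limit $\coth(kt)\to1$ as $t\to\infty$. The only addition is that you explicitly verify the preliminary inclusion $F^\u_v\subset\cC_v$ via convexity and boundedness of $t\mapsto\lVert J_\xi(t)\rVert$ on $(-\infty,0]$, a step the paper leaves implicit.
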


\begin{remark}
Even though we will not further use this fact, observe that a further consequence is that $F^\u_v=F^\s_v=:\bR\xi$ if and only if $C_v(\xi)=0$ (the stable/unstable Jacob field has only a horizontal component).
\end{remark}

The \emph{rank} of a vector $v\in T^1M$ is the codimension of the space $F^\s_v\oplus F^\u_v$ in $T_vT^1M$. The rank is $1$ on the \emph{regular set $\cR$} and $2$ on the \emph{higher rank set $\cH$}. The set $\cR$ is open and invariant. It is also dense~\cite{Bal:82}. A vector $v\in T^1M$ belongs to $\cR$ if and only if the geodesic $\gamma_v$ passes through a point at which the curvature is negative. The complementary set $\cH$ is closed, invariant, and nowhere dense and $v\in\cH$ if and only if the curvature at $\gamma_v$ is constant $0$.

Orthogonal Jacobi fields provide a \emph{continuous} vector bundle that defines the following \emph{continuous} potential which is of great importance for many thermodynamic properties of the flow.
Consider the so-called \emph{geometric potential} defined by
\begin{equation}\label{phidef}
	\varphi^{(\u)}(v)
	\eqdef -\frac{d}{dt}\log\,\lVert dg^t|_{F^\u_v}\rVert|_{t=0} 
	= -\lim_{t\to0}\frac1t\log\,\lVert dg^t|_{F^\u_v}\rVert,
\end{equation}
which is well-defined and depends differentiably on $F^\u_v$ and hence continuously on $v$. 
Analogously, we also consider the potential
\[
	\varphi^{(\s)}(v)\eqdef 
	\lim_{t\to0}\frac{1}{t}\log\,\lVert  d{g}^t|_{ F^\s_v}\rVert
	=\lim_{t\to0}\frac{1}{t}\log\,\lVert  d{g}^{-t}|_{ F^\u_v}\rVert
	= -\varphi^{(\u)}(v).
\]

\begin{remark}
Note that $\varphi^{(\u)}$ vanishes on $\cH$ because the norm of any unstable Jacobi field is constant along geodesics  in $\cH$.
\end{remark}

\begin{remark}[basic sets]\label{bbasic}
Recall that a closed $g^t$-invariant set $X\subset T^1M$ is \emph{hyperbolic} if the tangent bundle restricted to $X$ is a Whitney sum of three $dg^t$-invariant subbundles
\[
	T_XT^1M
	=E^\s\oplus E\oplus E^\u,
\]
where $E$ is the one-dimensional bundle tangent to the flow, and if there are positive constants $c$ and $\lambda$ so that
\[
	\lVert dg^t(u)\rVert\le ce^{-\lambda t}\lVert u\rVert,\quad
	\lVert dg^{-t}(w)\rVert\le ce^{-\lambda t}\lVert w\rVert,\quad
\]
for each $u\in E^\s$ and each $w\in E^\u$ and all $t\ge0$. 
A closed $g^t$-invariant set $X\subset T^1M$ is \emph{basic} if $X$ is hyperbolic, the periodic orbits contained in $X$ are dense in $X$, $g^t|_X$ is transitive, and there is an open set $U\supset X$ so that $X=\bigcap_{t\in\bR}g^t(U)$.

For every $v\in X$, $F^\s_v$ (resp. $F^\u_v$)  coincides with the stable subspace $E^\s_v$ (the unstable subspace $E^\u_v$) in the hyperbolic splitting. In particular, since, those subspaces vary continuously, for any sufficiently small neighborhood $U$ of $X$, the angle between the subspaces $F^\s_w$ and $F^\u_w$, $w\in U$, is uniformly bounded away from zero. 

Recall (see for example \cite[Proposition 6.4.16]{KatHas:95}) that for every basic set $X$ there exist positive constants $c$, $\lambda$, and $\delta$ such that if $v\in X$ then for every $w\in T^1M$ satisfying $\rho(g^t(w),g^t(v))<\delta$ for all $t$, $\lvert t\rvert\le T$, then
\[
	\rho(g^t(w),g^t(v))
	<ce^{-\lambda \lvert T-\lvert t\rvert\rvert}\delta.
\]
\end{remark}

\begin{remark}[regularity of $\varphi^{(\u)}$ on basic sets]
In restriction to  each basic set,  by~\cite[Theorem 19.1.6]{KatHas:95} the map $ F^\u_v$ varies H\"older continuously in $v$. Hence the restriction of $\varphi^{(\u)}$ to each basic set is  H\"older continuous. 
\end{remark}

\subsection{Invariant  foliations and local product structure}\label{sec:invfol}

 Although only continuous in general, nevertheless, each of the distributions $F^\ast$ is integrable to a foliation $\sW^\ast$, $\ast\in\{\s,\cs,\u,\cu\}$, respectively. Their description is purely geometric and their existence requires only the completeness of the geodesic flow of a manifold of nonpositive sectional curvature. If the manifold is compact and negatively curved then they coincide with the usual (center) un-/stable manifolds. Just note that $\sW^\s,\sW^\u$ are defined in terms of level sets of Busemann functions  (see~\cite{Ebe:73b} and~\cite[Section VI C]{Ebe:01} for details). 
 
 Relative to the Sasaki metric, the bundles $F^\s$ and $F^\u$ are both orthogonal to the vector field $V$ tangent to the geodesic flow.  Denote by $F^0$ the subbundle tangent to $V$. The following facts are well known (see~\cite{Ebe:73b}).
 
 \begin{lemma}\label{lem:invfol}
 	For any $v\in T^1M$ we have 
\begin{itemize}
\item[(i)] $F^{\cs}_v=F^\s_v\oplus F^0_v$,  $F^\cu_v=F^\u_v\oplus F^0_v$, 
\item[(ii)] $\sW^\cs_v=\bigcup_{t\in\bR}g^t\sW^\s_v$ and  $\sW^\cu_v=\bigcup_{t\in\bR}g^t\sW^\u_v$,
\item[(iii)] $g^t\sW^\ast_v=\sW^\ast_{g^t(v)}$, $\ast\in\{\s,\cs,\u,\cu\}$, for every $t\in\bR$.
\end{itemize}
\end{lemma}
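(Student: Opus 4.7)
The plan is to prove (i), then (iii) for $\sW^\s$ and $\sW^\u$, and then (ii), from which the remaining cases of (iii) follow at once. Throughout I would take as given the geometric description of the foliations via asymptotic geodesics and Busemann functions from~\cite{Ebe:73b,Ebe:01}, rather than attempt a Frobenius or Hadamard--Perron integration: the distributions $F^\s,F^\u,F^{\cs},F^{\cu}$ are in general only continuous, so their leaves must be produced by an external geometric construction rather than by local integration of tangent fields.

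For part (i), I argue by transversality and dimension count. Under the isomorphism $\Psi$ of Section~\ref{ss:jf}, the flow vector $V(v)$ spanning $F^0_v$ corresponds to $(v,0)\in T_{\pi v}M\oplus T_{\pi v}M$; the associated Jacobi field $t\mapsto \dot\gamma_v(t)$ has constant norm $1$, so $F^0_v\subset F^{\cs}_v\cap F^{\cu}_v$. Since $F^0_v$ is not contained in the orthogonal subbundle $\Psi^{-1}(v^\perp\oplus v^\perp)$ while $F^\s_v$ is, the sum $F^\s_v\oplus F^0_v$ is direct and two-dimensional. On a surface the space of orthogonal Jacobi fields along $\gamma_v$ is two-dimensional, and via the scalar equation~\eqref{eq:Jacobi-simp} convexity of $t\mapsto \lVert J(t)\rVert$ together with the presence of the complementary unstable direction identifies the orthogonal center stable subspace as the one-dimensional space $F^\s_v$. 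Hence $\dim F^{\cs}_v=2$ and $F^{\cs}_v=F^\s_v\oplus F^0_v$ follows; the identity $F^{\cu}_v=F^\u_v\oplus F^0_v$ is symmetric.

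For part (iii) on $\sW^\s_v$ and $\sW^\u_v$, I invoke the characterization of $\sW^\s_v$ as the set of $w\in T^1M$ whose forward geodesic $\gamma_w$ is asymptotic to $\gamma_v$, equivalently as a horospherical leaf of the Busemann function associated to $v$ (see~\cite{Ebe:73b}). Forward asymptoticity is preserved under the simultaneous action of $g^s$ on $v$ and $w$, giving $g^s\sW^\s_v=\sW^\s_{g^s(v)}$, and $\sW^\u$ is handled by time reversal via~\eqref{eq:forbac}. For part (ii), the inclusion $\bigcup_{t\in\bR} g^t\sW^\s_v\subset \sW^{\cs}_v$ holds because each leaf $g^t\sW^\s_v=\sW^\s_{g^t(v)}$ is tangent to $F^\s\subset F^{\cs}$ by part~(i) and passes through $g^t(v)\in\sW^{\cs}_v$, so must lie in $\sW^{\cs}_v$. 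For the reverse inclusion I appeal again to Eberlein's construction, in which $\sW^{\cs}_v$ is produced as the flow-saturate of the horospherical leaf $\sW^\s_v$, so every $w\in\sW^{\cs}_v$ sits on a flow orbit through some point of $\sW^\s_v$; the $\cu$-case is symmetric. The remaining invariance $g^t\sW^{\cs}_v=\sW^{\cs}_{g^t(v)}$ and $g^t\sW^{\cu}_v=\sW^{\cu}_{g^t(v)}$ in (iii) then follows because the flow-saturate transforms equivariantly under $g^t$ together with the already established invariance of $\sW^\s_v$ and $\sW^\u_v$.

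The main technical obstacle is that in the non-Anosov, nonpositively curved regime the invariant bundles $F^\s,F^\u,F^{\cs},F^{\cu}$ have only continuous regularity, so the classical Frobenius/Hadamard--Perron machinery producing invariant manifolds is unavailable. The plan sidesteps this by importing the foliations directly from the geometric construction via asymptotic geodesics and Busemann functions in~\cite{Ebe:73b,Ebe:01} and then verifying that these geometric leaves are tangent to the analytically defined bundles used elsewhere in the paper; with that in hand, (i)--(iii) reduce to a combination of dimension counting, direct unfolding of the definitions, and equivariance of the flow-saturate.
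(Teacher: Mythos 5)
The paper itself gives no proof of this lemma: it is stated as ``well known'' with a reference to \cite{Ebe:73b}, so there is no internal argument to compare against. Your proposal is correct in outline and effectively takes the same route---importing the geometric construction of the leaves (asymptotic geodesics, Busemann functions, and the description of $\sW^{\cs}_v$, $\sW^{\cu}_v$ as flow-saturates of $\sW^\s_v$, $\sW^\u_v$) from \cite{Ebe:73b,Ebe:01}---while adding the standard Jacobi-field bookkeeping for (i): since $\Psi(T_vT^1M)=T_{\pi v}M\oplus v^\perp$ one has $\langle J_\xi'(0),v\rangle=0$, hence $\langle J_\xi(t),\dot\gamma_v(t)\rangle\equiv c$ is constant, $J_\xi=J^\perp+c\,\dot\gamma_v$ with $J^\perp$ an orthogonal Jacobi field, and $\lVert J_\xi(t)\rVert^2=\lVert J^\perp(t)\rVert^2+c^2$, so membership in $F^{\cs}_v$ is decided by the orthogonal part; combined with one-dimensionality of the space of forward-bounded orthogonal Jacobi fields this gives $F^{\cs}_v=F^\s_v\oplus F^0_v$. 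One phrase in your (i) deserves correction: you rule out $\dim F^{\cs}_v=3$ by invoking ``the presence of the complementary unstable direction'', but along higher-rank geodesics $F^\u_v=F^\s_v$, so the unstable direction is not complementary there. The fix is elementary and does not use $F^\u_v$ at all: since $K\le 0$, $t\mapsto\lVert J(t)\rVert^2$ is convex, so a forward-bounded orthogonal Jacobi field vanishing at $t=0$ vanishes identically; consequently two forward-bounded orthogonal fields are proportional (a suitable linear combination vanishes at $0$), the forward-bounded orthogonal space is exactly one-dimensional, and an orthogonal field unbounded in forward time always exists (in the flat case it grows linearly). With that replacement your (i) is complete, and (ii)--(iii) are, as you say, direct consequences of the asymptoticity/Busemann description and its flow-equivariance, which is precisely the content the paper delegates to Eberlein.
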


Note that the intersection of the leaves $\sW^\s_v$ and $\sW^\u_v$ in $v$ is transversal if, and only if, the sectional curvature at $v$ is nonzero; otherwise both submanifolds are tangential at this vector. Nevertheless, under the hypothesis that the geodesic flow is expansive, the intersection is precisely $\{v\}$, that is, the manifolds $\sW^\s_v$ and $\sW^\u_v$ intersect topologically transversely at $v$. Indeed, otherwise the intersection would be contained in a flat strip, see Section~\ref{sec:exp}.

By the above we have the following lemma which is just a reformulation of~\cite[Lemma 3.1]{Bal:95} (recall Section~\ref{sec:exp} and compare~\cite[Lemma 4.5]{CouSch:14}).

\begin{lemma}[Local product structure]\label{lem:locpro}
	The geodesic flow admits a \emph{local product structure}, that is, for every $\varepsilon>0$ there exists $\delta>0$ such that for every $w_1,w_2\in T^1M$ satisfying $\rho(w_1,w_2)\le\delta$ there exists a point $[w_1,w_2]\in T^1M$ and a real number  $t$, $\lvert t\rvert\le\varepsilon$ such that
\[
	[w_1,w_2]\in\sW^\s_\varepsilon(g^t(w_1))\cap\sW^\u_\varepsilon(w_2),
\]
where by $\sW^\ast_\varepsilon(u)$ we denote the set of points $w\in\sW^\ast_u$ satisfying $\rho(w,u)\le\varepsilon$, $\ast=\s,\u$ (compare Figure~\ref{fig:1}). 
\end{lemma}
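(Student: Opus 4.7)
The strategy is to build the point $[w_1,w_2]$ as the intersection of the local center-stable leaf through $w_1$ with the local unstable leaf through $w_2$, and then to peel off a small flow-time factor $t$ using the decomposition $\sW^\cs_v=\bigcup_{s\in\bR}g^s\sW^\s_v$ from Lemma~\ref{lem:invfol}(ii), so that the resulting point lies in $\sW^\s_\varepsilon(g^t(w_1))$ rather than merely in $\sW^\cs_\varepsilon(w_1)$.

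I would obtain existence and uniqueness of the intersection by working in the universal cover $\tilde M$. Each vector $\tilde w\in T^1\tilde M$ has endpoints $\xi_\pm(\tilde w)$ on the geometric boundary of $\tilde M$, and the leaf $\sW^\cs_{\tilde w}$ (resp.\ $\sW^\u_{\tilde w}$) is characterized by fixing $\xi_+$ (resp.\ fixing $\xi_-$ together with the value of the Busemann function based at $\xi_-$ through the footpoint). A vector $\tilde p\in\sW^\cs_{\tilde w_1}\cap\sW^\u_{\tilde w_2}$ must therefore be tangent to a geodesic joining $\xi_-(\tilde w_2)$ to $\xi_+(\tilde w_1)$, with footpoint pinned down by the Busemann condition. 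The expansivity hypothesis enters here via the Flat Strip Theorem recalled in Section~\ref{sec:exp}, which forbids distinct geodesics joining the same pair of boundary points and thus guarantees that $\tilde p$ exists and is unique whenever $\xi_+(\tilde w_1)\ne\xi_-(\tilde w_2)$, a condition satisfied by all sufficiently close lifts $\tilde w_1,\tilde w_2$.

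Continuity of the maps $\tilde w\mapsto\xi_\pm(\tilde w)$ and of the Busemann functions, together with compactness of $T^1M$, will promote this pointwise construction to a uniform statement: for every $\varepsilon>0$ there exists $\delta>0$ such that $\rho(w_1,w_2)\le\delta$ forces the projected point $p\eqdef\pi(\tilde p)$ to lie $\varepsilon$-close to both $w_1$ and $w_2$. Applying Lemma~\ref{lem:invfol}(ii) I then write $p=g^t(q)$ with $q\in\sW^\s_{w_1}$; smallness of $\rho(p,w_1)$ translates into $|t|\le\varepsilon$ and $\rho(q,w_1)\le\varepsilon$. Setting $[w_1,w_2]\eqdef p$ then yields $p\in\sW^\s_\varepsilon(g^t(w_1))\cap\sW^\u_\varepsilon(w_2)$ as required.

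The main obstacle is that the distributions $F^\s$ and $F^\u$ (and hence $F^\cs$ and $F^\cu$) coincide precisely on $\cH$, so the associated foliations are not geometrically transverse and no smooth or Lipschitz inverse-function-theorem argument is available to produce the intersection point. The entire existence-and-uniqueness step must therefore be carried out topologically, through the Busemann-function/boundary description in the universal cover, and the no-flat-strip property furnished by expansivity is the sole mechanism preventing this construction from degenerating into a whole arc of intersection points.
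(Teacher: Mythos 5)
Your route is essentially the one the paper relies on: the paper gives no independent proof but simply observes that, since expansivity rules out flat strips (Section~\ref{sec:exp}), the statement is a reformulation of \cite[Lemma~3.1]{Bal:95} (compare \cite[Lemma~4.5]{CouSch:14}), and that lemma is proved exactly by the boundary-at-infinity/Busemann description you sketch, followed by the same peeling-off of the flow parameter via Lemma~\ref{lem:invfol}(ii). So in spirit you are reconstructing the argument behind the paper's citation rather than taking a different path.

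There is, however, one concrete slip you should repair: the Flat Strip Theorem only forbids \emph{distinct} bi-asymptotic geodesics, i.e.\ it gives \emph{uniqueness} of a geodesic from $\xi_-(\tilde w_2)$ to $\xi_+(\tilde w_1)$; it does not produce one. In a general Hadamard surface two boundary points need not be joined by any geodesic (the Euclidean plane already shows this), so existence is a genuinely separate step. In the present setting it can be supplied in two standard ways: either invoke that $\tilde M$ is a (uniform) visibility manifold, which holds because $M$ is a compact nonpositively curved surface of negative Euler characteristic \cite{EbeONe:73} and does not even need expansivity; or run the limiting argument of \cite[Lemma~3.1]{Bal:95} itself, which connects nearby boundary points by geodesic segments and uses the no-flat-strip hypothesis to force the limits to stay at bounded distance, yielding both existence and the locally uniform closeness estimate. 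Relatedly, your final compactness step needs the bracket $(w_1,w_2)\mapsto[w_1,w_2]$ to be continuous where defined (this follows from uniqueness of the connecting geodesic together with convexity of the distance between geodesics) and to equal $w_1$ on the diagonal; with those two facts the uniform choice of $\delta$ for a given $\varepsilon$ follows from compactness of $T^1M$ as you intend. With the existence step correctly attributed, the proof is sound.
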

\begin{figure}
\begin{overpic}[scale=.49 
  ]{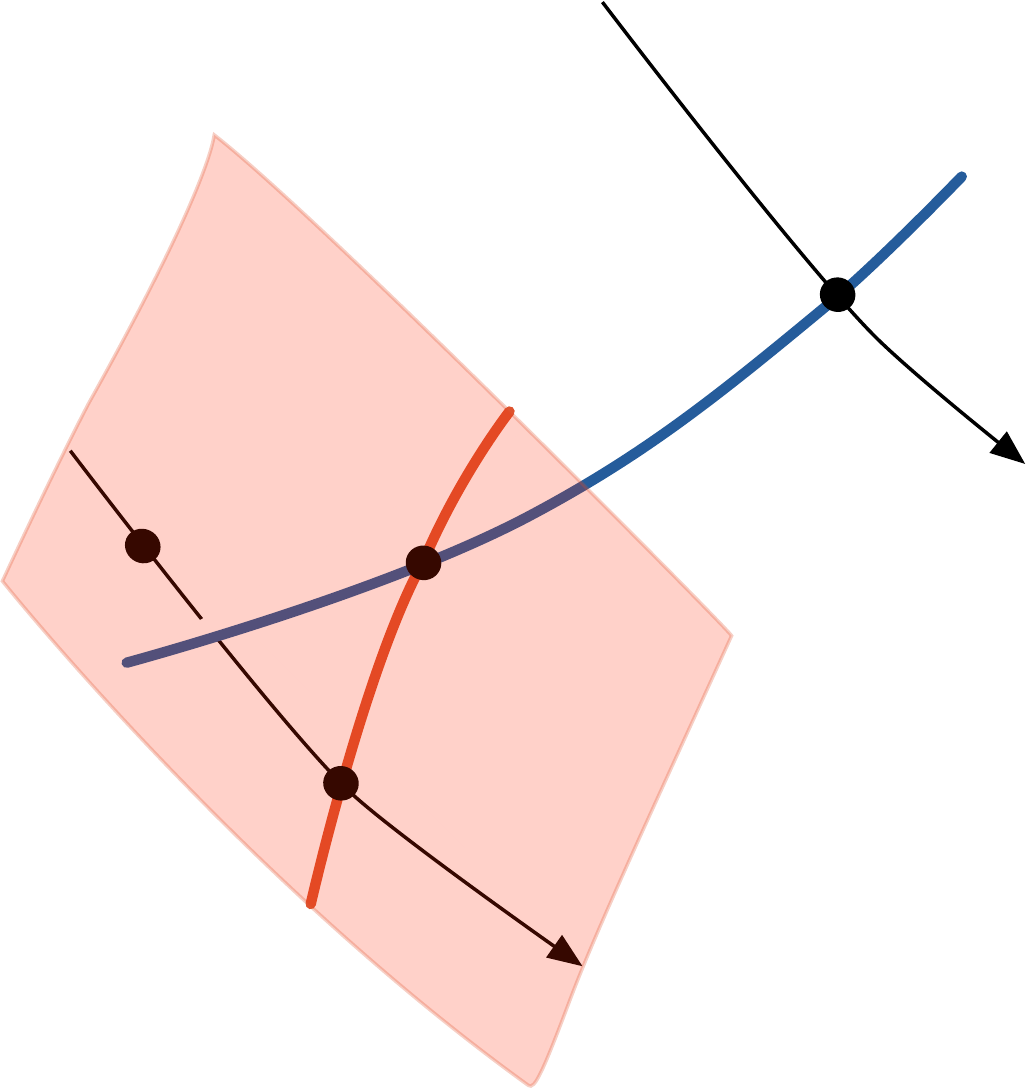}
  	\put(41,45){\tiny $[w_1,w_2]$}
  	\put(80,72){\tiny $w_2$}
  	\put(5,49){\tiny $w_1$}
  	\put(12,27){\tiny $g^t(w_1)$}
  	\put(50,0){\tiny $\textcolor{red}{\sW^\cs_{w_1}}$}
  	\put(45,65){\tiny $\textcolor{red}{\sW^\s_{g^t(w_1)}}$}
  	\put(90,87){\tiny $\textcolor{blue}{\sW^\u_{w_2}}$}
 \end{overpic}
\caption{local product structure}
\label{fig:1}
\end{figure}

\begin{corollary}\label{cor:unstmani}
	Given $\varepsilon>0$ an expansivity constant and $\delta=\delta(\varepsilon/3)>0$ as in Lemma~\ref{lem:locpro} sufficiently small, there exists $\delta>0$ such that given any $v\in T^1M$, for every $w\in T^1M$ satisfying $\rho(g^t(w),g^t(v))\le\delta$ for every $t\le0$ we have $w\in\sW^\cu_v$.
\end{corollary}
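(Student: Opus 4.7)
The plan is to apply the local product structure (Lemma~\ref{lem:locpro}) to $v$ and $w$ to produce a bridging point $z$ lying simultaneously on the local unstable leaf of $w$ and on the local stable leaf of a point on the orbit of $v$, and then to use expansivity to force $z$ itself to lie on that orbit.

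Concretely, I would choose $\delta\le\min\{\delta(\varepsilon/3),\varepsilon/3\}$, where $\delta(\varepsilon/3)$ is supplied by Lemma~\ref{lem:locpro}. The hypothesis at $t=0$ gives $\rho(v,w)\le\delta$, so Lemma~\ref{lem:locpro} applied with $w_1=v$, $w_2=w$ yields a point $z\eqdef[v,w]$ and a real number $\tau$ with $|\tau|\le\varepsilon/3$ satisfying $z\in\sW^\s_{\varepsilon/3}(g^\tau(v))\cap\sW^\u_{\varepsilon/3}(w)$.

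Next I would show $\rho(g^t(z),g^{t+\tau}(v))\le\varepsilon$ for every $t\in\bR$. For $t\ge 0$, since $z\in\sW^\s_{\varepsilon/3}(g^\tau(v))$, the non-increase of leafwise distance along stable leaves under the forward flow (a consequence of the convexity of $t\mapsto\|J(t)\|$ for orthogonal stable Jacobi fields on a nonpositively curved surface) yields $\rho(g^t(z),g^{t+\tau}(v))\le\varepsilon/3$. For $t\le 0$ I combine via the triangle inequality the three bounds $\rho(g^t(z),g^t(w))\le\varepsilon/3$ (from $z\in\sW^\u_{\varepsilon/3}(w)$ and the analogous convexity in backward time for unstable Jacobi fields), $\rho(g^t(w),g^t(v))\le\delta$ (the tracking hypothesis), and $\rho(g^t(v),g^{t+\tau}(v))\le|\tau|\le\varepsilon/3$ (since the generator $V$ of the geodesic flow satisfies $d\pi V(u)=u$ and $C_uV(u)=0$, so $\|V\|\equiv 1$ in the Sasaki metric). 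Summing, $\rho(g^t(z),g^{t+\tau}(v))\le 2\varepsilon/3+\delta\le\varepsilon$.

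Finally, taking the trivial reparametrization, expansivity with constant $\varepsilon$ now forces $g^s(z)=g^\tau(v)$ for some $|s|<\varepsilon$, so $z=g^{\tau-s}(v)$ belongs to the orbit of $v$. Combining $w\in\sW^\u_z$ with Lemma~\ref{lem:invfol}(ii)--(iii) yields $w\in g^{\tau-s}\sW^\u_v\subset\sW^\cu_v$, as claimed. The main obstacle is the bookkeeping: one must calibrate the local product time shift $\tau$, the leafwise displacement of $z$, and the tracking defect $\delta$ so that their sum fits within the expansivity constant $\varepsilon$. The unit Sasaki speed of the geodesic generator and the convexity of orthogonal Jacobi fields are precisely the ingredients that make this calibration work.
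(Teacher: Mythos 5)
Your proposal is correct and follows essentially the same route as the paper's proof: apply Lemma~\ref{lem:locpro} to the pair $\{v,w\}$, bound $\sup_{t\in\bR}$ of the distance between the bridging point's orbit and a time-shifted orbit by $\varepsilon$ using monotonicity of leafwise distances, the tracking hypothesis, and the unit speed of the flow, then invoke expansivity and Lemma~\ref{lem:invfol} to land in $\sW^\cu_v$. The only (immaterial) difference is that you take $[v,w]$ on $\sW^\u_w\cap\sW^\s_{g^\tau(v)}$ whereas the paper takes $[w,v]$ on $\sW^\u_v\cap\sW^\s_{g^\tau(w)}$.
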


\begin{proof}
	Let $\varepsilon>0$ an expansivity constant.
	Let $\delta=\delta(\varepsilon/3)$ be as in Lemma~\ref{lem:locpro} and also assume that $\delta<\varepsilon/3$. 
	First note that for every $u\in\sW^\u_v$ for every $t\le0$ we have $\rho(g^t(u),g^t(v))\le\rho(u,v)$ and for every $w\in\sW^\s_v$ for every $t\ge0$ we have $\rho(g^t(w),g^t(v))\le\rho(w,v)$. 

	Given $v\in T^1M$ and $w\in T^1M$ satisfying $\rho(g^t(w),g^t(v))\le\delta$ for every $t\le0$, by Lemma~\ref{lem:locpro} applied to $w_1=w$ and $w_2=v$, there exists $u\eqdef[w,v]$ and $\tau$, $\lvert \tau\rvert\le\varepsilon/3$, such that $u\in\sW^\s_{\varepsilon/3}(w')\cap\sW^\u_{\varepsilon/3}(v)$, where $w'=g^\tau(w)$. Hence, using the above and the hypothesis, for every $t\ge0$ we have
\[
	\rho(g^t(u),g^t(w'))
	\le \rho(u,w')
	\le \frac\varepsilon3,
\]	
while for every $t\le0$ we have
\[\begin{split}
	\rho(g^t(u),g^t(w'))
	&\le \rho(g^t(u),g^t(v)) + \rho(g^t(v),g^t(w)) + \rho(g^t(w),g^t(w'))\\
	&\le \rho(u,v) + \delta + \rho(w,w') 
	\le \frac\varepsilon3 + \frac\varepsilon3 + \frac\varepsilon3
	=\varepsilon.
\end{split}\]	 
This implies that $\sup_{t\in\bR}\rho(g^t(u),g^t(w'))\le\varepsilon$. By expansivity, hence we can conclude $w'=g^s(u)$ for some $s$ and hence $w=g^{s-\tau}(u)\in\sW^\cu_v$ as claimed.
\end{proof}

\subsection{Lyapunov exponents}
The Lyapunov exponents of the geodesic flow are well defined for all {\em Lyapunov regular} vectors. The set of Lyapunov regular vectors is of full measure with respect to   any invariant probability measure (see for example the appendix of \cite{KatHas:95}). 
 Since we assume that $M$ is a surface, for a {\em Lyapunov (forward) regular} vector $v$ there exists at most one positive Lyapunov exponent $\chi(v)$. Moreover, classical computations give
\begin{equation}\label{eq:Lyapuu}
\chi(v)
 = \lim_{T \to \infty}  \frac1T\log\,\lVert dg^T|_{F^\u_v}\rVert
 = - \lim_{T \to \infty}  \frac1T \int_0^T \varphi^{(\u)}(g^t(v))\,dt.
\end{equation}
Analogously for a Lyapunov backward regular vector.

\begin{remark}
The above observations imply that $\chi(v)=0$ if $\xi\in F^\u_v\cap F^\s_v$, $\xi\ne0$. On the other hand, if $F^\u_v\cap F^\s_v$ is trivial and $\xi\in F^\u_v\setminus F^\s_v$ then $\lVert J_\xi(t)\rVert$ is unbounded but could grow only  sub-exponentially and hence we could have $\chi(v)=0$. If $\chi(v)>0$ then $F^\u_v\cap F^\s_v$ is trivial.
\end{remark}

Given a $G$-invariant Borel probability measure $\mu$, let
\begin{equation}\label{deflyame}
	\chi(\mu)\eqdef\int\chi(v) \,d\mu(v).
\end{equation}

\subsection{Almost multiplicative sequences}\label{ss:almost}

We now derive an almost multiplicative properties of the norm and the inverse norm for the differential $dg^t$. 

We start with some elementary observations. Given a linear invertible transformation between two Banach spaces $L\colon E\to F$, let
\[
	 \lVert L\rVert
	 \eqdef \max_{v, \lVert v\rVert=1}\lVert Lv\rVert 
	 \quad\text{ and }\quad
	[L]
	\eqdef
	\min_{v, \lVert v\rVert=1}\lVert Lv\rVert 
	= \lVert L^{-1}\rVert^{-1}
\]
denote the \emph{norm} and the  \emph{inverse norm} of $L$, respectively. 
A \emph{cone $K$} in a  finite-dimensional Banach space $E$ is a nonempty, convex, closed subset such that $a\xi\in K$ for all $a>0$ and $\xi\in K$, and that $K\cap -K=\{0\}$.

Consider the vector bundle $\{F_v\}_{v\in T^1M}\subset TT^1M$ and the cone field $\{\cC_v\}_{v\in T^1M}$ defined in~\eqref{def:conecC}. Given $\kappa>0$, consider the cone field $\{\cC^\kappa_v\}_{v\in T^1M}$ defined in~\eqref{def:conecCk}. Recall that
\[
	\cC^\kappa_v\subset\cC_v\subset F_v
\]
for all $v\in T^1M$. Recall also that (for any $v$) we have
\[
	\cC^k_v\setminus\{0\}\subset\interior(\cC_v).
\]
We say that the tangent map $dg^t$ \emph{satisfies a cone condition with the cone fields $\cC$ and $\cC^\kappa$} if for every $v\in T^1M$ we have
\[
	dg^t_v(\cC_v)\subset\cC^\kappa_{g^t(v)}.
\]
The following lemma is now an immediate consequence.
\begin{lemma}\label{lem:justin}
	For every $\tau>0$ there exists a  constant $C=C(\tau)\ge1$  such that for every $v\in T^1M$ and for every $s,t$ satisfying $\lvert s\rvert,\lvert t\rvert\ge\tau$ we have 
\[
	C^{-1}\lVert dg^s|_{F_v}\rVert\lVert dg^t|_{F_{g^s(v)}}\rVert
	\le \lVert dg^{s+t}|_{F_v}\rVert
	\le \lVert dg^s|_{F_v}\rVert\lVert dg^t|_{F_{g^s(v)}}\rVert
\]
and
\[
	[ dg^s|_{F_v}][ dg^t|_{F_{g^s(v)}}]
	\le [ dg^{s+t}|_{F_v}]
	\le C[ dg^s|_{F_v}][ dg^t|_{F_{g^s(v)}}].
\]
\end{lemma}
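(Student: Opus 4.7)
The plan is to dispatch the right-hand inequalities first and then focus on the almost-supermultiplicative left-hand ones. Both right-hand inequalities are immediate from the chain rule $dg^{s+t}|_{F_v} = dg^t|_{F_{g^s(v)}} \circ dg^s|_{F_v}$ together with the elementary bounds $\lVert BA\rVert \le \lVert B\rVert\lVert A\rVert$ and $[BA] \ge [B][A]$.

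For the left-hand inequalities I would first reduce the problem. The geodesic flow is symplectic, since the Jacobi equation preserves the Wronskian of any pair of solutions, so $\det dg^t|_{F_v} = 1$ and hence $[dg^t|_{F_v}] = \lVert dg^t|_{F_v}\rVert^{-1}$; the two left inequalities are therefore reciprocals of each other, and it suffices to prove the norm version. The case $s,t \le -\tau$ follows by applying the forward argument to the backward flow, whose invariant cone field is the complementary cone $\{\xi \in F_v : \langle d\pi_v\xi, C_v\xi\rangle \le 0\}$, forward-invariant under $g^{-t}$ by the Riccati analysis establishing Corollary~\ref{cor:coninv} run in reverse time. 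So the whole task reduces to proving
\[
\lVert dg^{s+t}|_{F_v}\rVert \ge C^{-1}\lVert dg^s|_{F_v}\rVert\,\lVert dg^t|_{F_{g^s(v)}}\rVert \qquad\text{for } s,t \ge \tau.
\]

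Next I would reduce to vectors already inside $\cC_v$. Choose a Sasaki-orthonormal basis $\{e_1,e_2\}$ of $F_v$ with $e_1$ horizontal ($C_v(e_1)=0$) and $e_2$ vertical ($d\pi_v(e_2)=0$); both lie on the boundary of $\cC_v$ since $\langle d\pi_v(e_i), C_v(e_i)\rangle = 0$. For any unit $\xi = ae_1 + be_2 \in F_v$ the triangle inequality gives $\lVert dg^s(\xi)\rVert \le \lVert dg^s(e_1)\rVert + \lVert dg^s(e_2)\rVert \le 2\max_i \lVert dg^s(e_i)\rVert$, so some $\hat\xi \in \{e_1,e_2\}\subset \cC_v$ satisfies $\lVert dg^s(\hat\xi)\rVert \ge \tfrac12\lVert dg^s|_{F_v}\rVert$. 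By Corollary~\ref{cor:coninv} the image $dg^s(\hat\xi)$ lies in the narrow cone $\cC^\kappa_{g^s(v)}$ with $\kappa = \kappa(\tau)$.

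The heart of the proof, and the step I expect to be the main obstacle, is the following two-dimensional estimate: there is $c = c(\kappa) > 0$ such that for every $w \in T^1M$, every $t \ge \tau$, and every unit $\eta \in \cC^\kappa_w$,
\[
\lVert dg^t(\eta)\rVert \ge c\,\lVert dg^t|_{F_w}\rVert.
\]
Granting this and applying it to $\eta = dg^s(\hat\xi)/\lVert dg^s(\hat\xi)\rVert$ at $w = g^s(v)$ yields the norm inequality with $C = 2/c$. The delicate point is that although forward cone narrowing confines the image of $\cC_w$ inside $\cC^\kappa_{g^t(w)}$, the \emph{input}-side top singular direction of $dg^t|_{F_w}$ need not itself lie in $\cC^\kappa_w$, so a naive estimate on the angle between $\eta$ and this direction is not available. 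My plan is to combine the forward narrowing $dg^t(\cC)\subset \cC^\kappa$ with the analogous backward narrowing of the complementary cone and the symplectic normalization $\sigma_1\sigma_2 = 1$ to confine both the input- and output-side top singular directions within controlled sub-cones; the $dg^t$-invariance of $F^u \subset \cC^k$ from Corollary~\ref{cor:invconFu} and the uniformly bounded opening of $\cC^\kappa$ then allow a standard $\operatorname{SL}(2,\bR)$ computation to compare $\lVert dg^t(\eta)\rVert$ with $\lVert dg^t|_{F_w}\rVert$.
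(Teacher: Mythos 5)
Your preliminary reductions are fine and follow the same skeleton as the paper's argument: the right-hand inequalities are indeed immediate, the Wronskian/determinant-one observation correctly makes the two displayed inequalities reciprocal to one another (the paper achieves the same reduction via the symmetry $g^t(v)=-g^{-t}(-v)$ together with $\lVert dg^t_v\rVert[dg^t_v]=1$), and the selection of $\hat\xi\in\{e_1,e_2\}\subset\cC_v$ with $\lVert dg^s\hat\xi\rVert\ge\tfrac12\lVert dg^s|_{F_v}\rVert$ is exactly the first half of the cone argument that the paper outsources to \cite[Section~2]{FenShm:14}. But your proposal stops precisely where all the content lies: the estimate $\lVert dg^t(\eta)\rVert\ge c(\kappa)\lVert dg^t|_{F_w}\rVert$ for unit $\eta\in\cC^\kappa_w$ is only announced, with a ``plan'' for an $SL(2,\bR)$ computation, so as written this is not a proof.

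Moreover, that announced estimate is not merely delicate, it is false, because the cone-contraction mechanism you intend to invoke requires the image cone to be uniformly projectively separated from the boundary of $\cC$, and here it is not: the narrowed cone contains the boundary direction of $\cC$ spanned by parallel Jacobi fields. Since the flow is not Anosov, $\cH\ne\emptyset$; for $w\in\cH$ the curvature vanishes identically along $\gamma_w$, so in horizontal/vertical coordinates $dg^t|_{F_w}=\left(\begin{smallmatrix}1&t\\0&1\end{smallmatrix}\right)$. The horizontal direction, which spans $F^\u_w$, is fixed by every $dg^s$, lies in $\cC_w$ and in $dg^s(\cC_w)\subset\cC^\kappa_{g^s(w)}$ (and in $\cC^k_w$ by Corollary~\ref{cor:invconFu}), yet its image has norm $1$ while $\lVert dg^t|_{F_w}\rVert\asymp t$; even the specific vector your argument produces, $\eta=dg^s(\hat\xi)/\lVert dg^s(\hat\xi)\rVert=(s,1)/\sqrt{1+s^2}$ with $\hat\xi=e_2$, satisfies $\lVert dg^t\eta\rVert\asymp(s+t)/s$, which is not comparable to $t$ uniformly in $s$. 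In fact at such $w$ the asserted almost multiplicativity itself breaks down: $\lVert dg^{s+t}|_{F_w}\rVert\asymp s+t$ whereas $\lVert dg^s|_{F_w}\rVert\,\lVert dg^t|_{F_{g^s(w)}}\rVert\asymp st$, so no constant $C(\tau)$ can work uniformly over all of $T^1M$. Thus the obstacle you flagged cannot be removed by a cleverer two-dimensional computation; what is genuinely needed is a uniform cone gap (equivalently, uniform transversality of $F^\s$ and $F^\u$, as on a basic set), and the same issue affects the paper's own one-line appeal to the cone lemma of \cite{FenShm:14}, which presupposes the image cone to be strictly inside the invariant cone.
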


\begin{proof}
To show the first assertion, apply for example arguments in~\cite[Section 2]{FenShm:14}.  For that note that, by Corollary~\ref{cor:coninv}, for every $\tau>0$ there exists $\kappa=\kappa(\tau)>0$ such that for every $t\ge\tau$ the tangent map $dg^t$ satisfies a cone condition with $\cC$ and $\cC^\kappa$. Note that in the horizontal and vertical coordinates provided by the isomorphism $\Psi$, both cone fields are constant, that is, their representations in those coordinates  do not depend on $v$. 

To conclude the second assertion, it suffices to observe that
\[
	[dg^t_w]
	= [dg^{-t}_{-w}]
	= \lVert(dg^{-t}_{-w})^{-1}\rVert^{-1}
	= \lVert dg^t_{g^{-t}(-w)}\rVert^{-1}
\]
and to apply the first assertion.
\end{proof}

For further reference, note that, since the geodesic flow preserves the Liouville measure, for every $v\in \bR$ and every $t\in\bR$ we have
\begin{equation}\label{eq:premeas}
	\lVert dg^t_v\rVert[dg^t_v]=1.
\end{equation}

To determine de norm $\lVert dg_v^t\rVert=\sup_{\lVert \xi\rVert=1}\lVert dg^t_v\xi\rVert$, recall that $\lVert dg_v^t\rVert$ is the largest singular value of the linear operator $dg_v^t\colon T_vT^1M\to T_{g^t(v)}T^1M$. Associated to the singular values there is a system of orthogonal subspaces $E_i\subset T_vT^1M$ such that for a unit length vector $\eta_i\in E_i$, $\lVert dg_v^t\eta_i\rVert$ equals the corresponding singular value.
If $\eta\in T_vT^1M$ is any unit length vector, consider its orthogonal decomposition $\eta=\xi+V(v)$, where $\xi\in T_vT^1M$ is orthogonal to $V(v)$. Because $dg^t_v\xi$ and $dg^t_vV(v)=V(g^t(v))$ are orthogonal and $V$ on $T^1M$ is a vector field of unit length (and hence $\lVert dg_v^tV(v)\rVert=\lVert V(g^t(v))\rVert=1$), $\bR V(v)$ corresponds to one subspace $E_i$ (with singular value $1$). Thus, we obtain
\begin{equation}\label{eq:equinorms}
	\lVert dg^t|_{F_v}\rVert
	=\lVert dg^t_v\rVert.
\end{equation}

We also obtain the following comparison relative to the bundle $F^\u$ defined in~\eqref{eq:defFuFs}. 

\begin{corollary}\label{cor:dis}
	For every $\tau>0$ there exists a constant $C=C(\tau)\ge1$ such that for every $v\in T^1M$ and for every $t\ge\tau$ we have 
	\[
	 \lVert dg^t_v\rVert 
	 \le C \lVert dg^t|_{F^\u_v}\rVert  
	 \quad\text{ and }\quad
	\lVert dg^{-t}_v\rVert 
	\le C\lVert dg^{-t}|_{F^\s_v}\rVert .
	\]	
\end{corollary}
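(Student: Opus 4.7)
The first step is to reduce to a comparison between norms on $F$ and $F^\u$. By~\eqref{eq:equinorms} one has $\lVert dg^t_v\rVert = \lVert dg^t|_{F_v}\rVert$, so the first inequality amounts to $\lVert dg^t|_{F_v}\rVert \le C\lVert dg^t|_{F^\u_v}\rVert$; the backward inequality then follows from it by the time-reversal identity $g^{-t}(v) = -g^t(-v)$ of~\eqref{eq:forbac} together with $F^\u_{-v} = F^\s_v$, so I will concentrate on the forward bound.

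The plan is to exploit the cone contraction from Corollary~\ref{cor:coninv}. Given $\tau > 0$, let $\kappa = \kappa(\tau)$ be as in that corollary, enlarging $\kappa$ if necessary so that Corollary~\ref{cor:invconFu} gives $F^\u_w \subset \cC^\kappa_w$ for every $w$. Fix a unit vector $\xi_\ast \in F_v$ realizing $\lVert dg^t\xi_\ast\rVert = \lVert dg^t|_{F_v}\rVert$, and, by choosing a sign convention or by running the symmetric argument on the complementary cone, assume $\xi_\ast \in \cC_v$. Set $w \eqdef g^\tau(v)$ and $\xi'_\ast \eqdef dg^\tau_v\xi_\ast$. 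By Corollary~\ref{cor:coninv}, $\xi'_\ast$ lies in the narrow cone $\cC^\kappa_w$, which also contains $F^\u_w$. Compactness of $T^1M$ and continuity of $dg^\tau$ give uniform bounds $\lVert\xi'_\ast\rVert \le M_1(\tau)$ and $[dg^\tau|_{F^\u_v}] \ge m_1(\tau) > 0$.

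The crucial geometric step is to compare $\lVert dg^{t-\tau}\xi'_\ast\rVert$ with $\lVert dg^{t-\tau}|_{F^\u_w}\rVert$, both of which measure the action of $dg^{t-\tau}$ on unit directions inside the same narrow cone $\cC^\kappa_w$. In the $2$-dimensional space $F_w$, forward invariance of $\cC^\kappa$ under each further iterate $dg^s$ (for $s \ge \tau$), combined with the area-preservation identity $\lVert dg^s|_{F_w}\rVert\,[dg^s|_{F_w}] = 1$ from~\eqref{eq:premeas}, forces the maximal singular direction of $dg^{t-\tau}|_{F_w}$ to lie inside $\cC^\kappa_w$: its image sits in a strictly narrower cone, while the minimal direction, being orthogonal to it, is pushed outside. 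Consequently every unit vector in $\cC^\kappa_w$ makes angle at most some $\theta(\kappa) < \pi/2$ with the maximal singular direction and therefore expands by at least $\cos\theta(\kappa)$ times the maximal singular value $\lVert dg^{t-\tau}|_{F_w}\rVert$. Applied to $F^\u_w$ this yields
\[
\lVert dg^{t-\tau}|_{F^\u_w}\rVert \ge \cos\theta(\kappa)\,\lVert dg^{t-\tau}|_{F_w}\rVert \ge \cos\theta(\kappa)\,\lVert dg^{t-\tau}\xi'_\ast\rVert,
\]
and combining this with the uniform bounds of the previous paragraph yields $\lVert dg^t\xi_\ast\rVert \le C \lVert dg^t|_{F^\u_v}\rVert$ with $C = C(\tau)$.

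The main obstacle is precisely this geometric comparison: showing that the maximal singular direction of $dg^{t-\tau}|_{F_w}$ lies inside the narrow cone requires carefully combining the forward invariance of $\cC^\kappa$ for all times $s \ge \tau$ (obtained by iterating Corollary~\ref{cor:coninv}) with the area-preserving structure of $dg^s|_{F}$ from~\eqref{eq:premeas}. Once this is established, the bound for every cone direction is an exercise in elementary $2$-dimensional singular-value geometry, and the submultiplicativity provided by Lemma~\ref{lem:justin} packages the final constants.
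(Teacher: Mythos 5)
Your reduction (via \eqref{eq:equinorms} and the time reversal \eqref{eq:forbac} with $F^\u_{-v}=F^\s_v$) and your overall strategy coincide with the paper's, which at the decisive point simply quotes the cone arguments of \cite[Section 2]{FenShm:14} to get $C^{-1}\lVert dg^t|_{F_v}\rVert\le\lVert dg^t_v(\xi)\rVert$ for unit $\xi$ in the narrow cone. The gap lies exactly in your attempt to supply that decisive step. The assertion that forward invariance of $\cC^\kappa$ together with $\lVert dg^s|_{F_w}\rVert\,[dg^s|_{F_w}]=1$ from \eqref{eq:premeas} ``forces the maximal singular direction of $dg^{t-\tau}|_{F_w}$ to lie inside $\cC^\kappa_w$'' is not established by your sentence (knowing where the \emph{images} of the singular directions land says nothing about where the right singular directions sit at the source), and it is false in the generality in which you use it. Concretely, take $v\in\cH$, which is nonempty since the flow is not Anosov: along $\gamma_v$ the curvature vanishes identically, orthogonal Jacobi fields satisfy $J''=0$, and in horizontal/vertical coordinates $dg^t|_{F_v}$ is the unipotent shear $\bigl(\begin{smallmatrix}1 & t\\ 0 & 1\end{smallmatrix}\bigr)$. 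These maps are area preserving and satisfy the cone condition of Corollary~\ref{cor:coninv} for $t\ge\tau$, yet their maximal right-singular direction converges to the vertical axis as $t\to\infty$, while $F^\u_v$ (spanned by the parallel Jacobi field, i.e.\ the horizontal axis, and contained in the relevant cone by Corollary~\ref{cor:invconFu}) stays at angle tending to $\pi/2$ from it; numerically, $\lVert dg^t|_{F^\u_v}\rVert\equiv1$ whereas $\lVert dg^t|_{F_v}\rVert\ge t$. Hence no bound $\lVert dg^{t-\tau}|_{F^\u_w}\rVert\ge\cos\theta(\kappa)\,\lVert dg^{t-\tau}|_{F_w}\rVert$ with $\theta(\kappa)<\pi/2$ depending only on $\kappa$ can hold, and your constant $C(\tau)$ does not exist along such orbits. (Your preliminary step ``assume $\xi_\ast\in\cC_v$'' is also unjustified, since the complementary cone is backward-, not forward-invariant, so there is no symmetric argument; but that part is repairable by decomposing $\xi_\ast$ along the two boundary directions of $\cC_v$. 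The singular-direction claim is the fatal one.)

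What your computation actually exposes is that the desired comparison requires $F^\u_w$ to lie \emph{uniformly in the interior} of the forward-invariant cone (equivalently, uniformly transverse to the stable direction), which is precisely what degenerates at and near higher-rank vectors; on a basic set this uniformity holds and a Feng--Shmerkin-type argument does go through, and this is the content the paper outsources to \cite{FenShm:14}. Note that the same shear computation shows the uniform constant in the statement is itself delicate on $\cH$, where $\lVert dg^t_v\rVert$ grows linearly while $\lVert dg^t|_{F^\u_v}\rVert$ stays bounded; in the paper the corollary is only used through Proposition~\ref{pro:whams}, i.e.\ to identify exponential growth rates, for which a subexponential (here linear) discrepancy would be harmless. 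Your argument, however, aims at the stated uniform bound by elementary two-dimensional singular-value geometry plus cone invariance and area preservation alone, and as the example shows these ingredients are insufficient, so the proof cannot be completed as written.
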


\begin{proof}
If $k>0$ is such that $K>-k^2$ then by Corollary~\ref{cor:invconFu} we have $F^\u\subset \cC^k$.
Note again that, by Corollary~\ref{cor:coninv}, for every $\tau>0$ there exists $\kappa>0$ such that for every $t\ge\tau$ the tangent map $dg^t$ satisfies a cone condition with $\cC$ and $\cC^\kappa$. In fact, by~\eqref{eview} we can chose $\kappa=\kappa(\tau)\eqdef k\coth(k\tau)$.
By arguments in~\cite[Section 2]{FenShm:14}, there exists a constant $C=C(\kappa)\ge1$ such that for any unit vector $\xi\in\cC^\kappa_v$ we have
\[
	C^{-1}\lVert dg^t|_{F_v}\rVert
	\le\lVert dg_v^t(\xi)\rVert.
\]
Taking now $\xi\in F^\u_v$ and also applying~\eqref{eq:equinorms} and \eqref{eq:premeas} we get
\[
	 C^{-1}\lVert dg_v^t\rVert
	= C^{-1}\lVert dg^t|_{F_v}\rVert
	\le \lVert dg^t|_{F^\u_v}\rVert
	\le \lVert dg_v^t\rVert.
\]

The proof for the second assertion is analogous observing $g^t(v)=-g^{-t}(-v)$ and recalling that $F^\u_{-v}=F^\s_v$.
\end{proof}

\begin{proof}[Proof of Proposition~\ref{pro:whams}]
The equalities are immediate consequences of Lemma~\ref{lem:justin} and Corollary~\ref{cor:dis} together with~\eqref{eq:equinorms} and~\eqref{eq:Lyapuu} (we emphasize that we assume in particular the existence of the limits).
\end{proof}

\section{Some thermodynamic preliminaries}\label{sec:prelim}

A principal tool for the proof of our main result is the well developed theory of equilibrium measures of H\"older continuous potentials on basic sets. This concerns in particular information about their coding and precise description of their Gibbs structure. Let us recall some standard concepts.

Given a continuous map $T\colon X\to X$ of a compact metric space $(X,d)$, we denote by $\cM(T)$ the space  of all $T$-invariant Borel probability measures and  endow it with the weak$\ast$ topology.   
Given a continuous function $\varphi\colon X\to\bR$, we denote by $P_{T}(\varphi)$ the \emph{topological pressure} of $\varphi$ (with respect to $T$) and recall  the following \emph{variational principle}
\[
	P_T(\varphi)
	=	\sup_{\mu\in \cM(T)} \left(h(\mu)+\int \varphi\, d\mu\right),
\] 
where $h(\mu)$ denotes the \emph{metric entropy} of $\mu$ (with respect to $T$). A measure attaining the supremum is called an \emph{equilibrium measure} for $\varphi$ (with respect to $T$) (see~\cite{Wal:82} for details). 

In~\cite{BowRue:75} there is given the definition of the \emph{topological pressure of a continuous flow} $G=(g^t)_{t\in\bR}$ on a compact metric space $X$, which we denote by $P_G(\varphi)$, which is equivalent to defining it as the topological pressure of the function $\varphi^1\colon X\to\bR$ defined by $\varphi^1(x)\eqdef \int_0^1\varphi(g^s(x))\,ds$ with respect to the time-$1$ map $g^1$ of the flow,
\[
	P_G(\varphi)
	= P_{g^1}(\varphi^1).
\] 
We denote by $\cM(G)\eqdef\bigcap_{t\in\bR}\cM(g^t)$ the set of all $G$-invariant probability measures on $X$ and by $\cM_{\rm e}(G)$ the subset of all ergodic measures in $\cM(G)$. Analogously to the case of maps, one calls $\mu\in\cM(G)$ an \emph{equilibrium measure} for $\varphi$ (with respect to the flow $G$) if 
\[
	P_G(\varphi)
	= h(\mu)+\int\varphi\,d\mu,
\]
where $h(\mu)$ here denotes the metric entropy of $\mu$ (with respect to $g^1$). Recall also that the latter notation is justified since, denoting by $h(g^t,\mu)$ the metric entropy of $\mu$ with respect to the time-$t$ map, by~\cite{Abr:59} for any $t\in\bR$ we have $h(g^t,\mu)=\lvert t\rvert h(g^1,\mu)$. 

\section{Suspension over an increasing family of SFT's}\label{s:sus}

The aim of this section is to construct an appropriate family of disjoint local cross sections and to model the expansive geodesic flow as a suspension flow.

Let $X\subset T^1M$ be a compact set which is invariant under the flow. Given an interval $I\subset \bR$ and a set $A\subset X$, denote $g^{I}(A)\eqdef\bigcup_{t\in I}g^t(A)$. A set $S\subset X$ is called a \emph{local cross section of time $\zeta>0$ for $G$ and $X$} if $S$ is closed and $S\cap g^{[-\zeta,\zeta]}(x)=\{x\}$ for all $x\in S$ and we let 
\[
	S^{\ast X}\eqdef S\cap{\rm int}\,g^{[-\zeta,\zeta]}(S)
\]	 
(where ${\rm int}$ denotes  the interior in the relative topology induced by $T^1M$ on $X$). We simply write $S^\ast$ if $X=T^1M$. 

\subsection{Local cross sections}\label{sec:loccro}

Given a vector $v\in T^1M$, let us construct a local cross section $D(v)$ containing $v$ (here we follow~\cite{GelRug:}). This section will be foliated by projections of leaves of the foliation $\sW^\u$ (recall Section~\ref{sec:invfol}). 

We first parametrize a neighborhood of a vector $v$ (compare Figure \ref{fig:3}). Given $\varepsilon>0$ sufficiently small let $\delta>0$ be as provided by Lemma~\ref{lem:locpro}. Given $v\in T^1M$, consider the stable leaf $\sW^\s_v$ and the unstable leaf $\sW^\u_v$, each parametrized by arc length. 
 Let
\[
	R_v\colon U\to T^1M,
	\quad U\eqdef \left(-\frac\delta2,\frac\delta2\right)^3\subset\bR^2
\]
be the map with the following properties (compare also Figure~\ref{fig:1}):
\begin{itemize}
\item $R_v(0,0,0)=v$,
\item $t\mapsto R_v(0,0,t)$ is the arc length parametrization of the flow line $t\mapsto g^t(v)$,
\item $r\mapsto R_v(r,0,t)$ is the arc length parametrization of $\sW^\u_{R_v(0,0,t)}$,
\item $s\mapsto R_v(0,s,t)$ is the arc length parametrization of $\sW^\s_{g^t(v)}$.\end{itemize}
\begin{figure}[h]
\begin{overpic}[scale=.48 
  ]{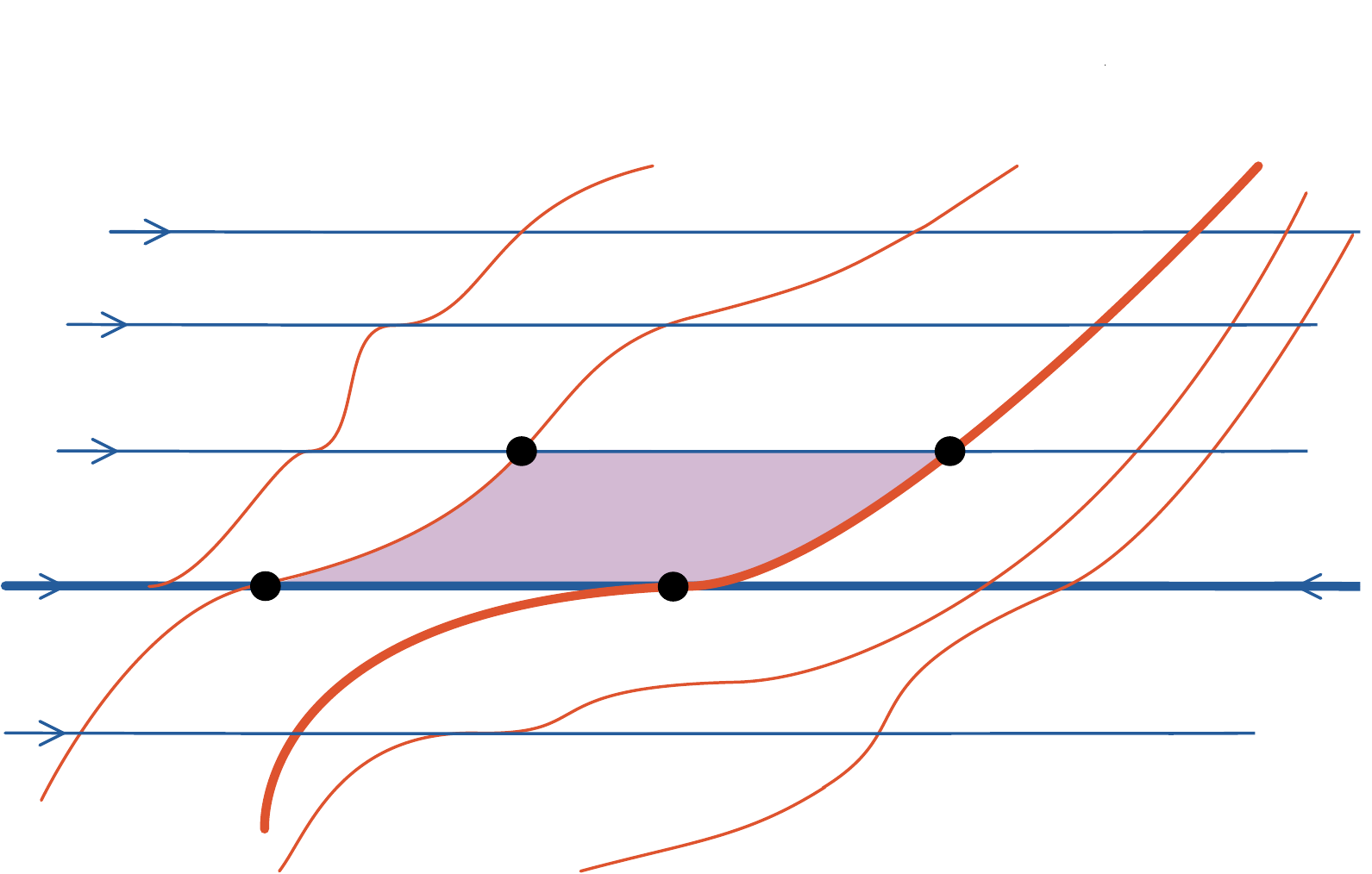}
  	\put(39,33.5){\small $R(r,s,0)$}
      \put(70,27){\small$w_2=R(0,s,0)$}
      \put(-1,34){\small$\sW^\u_{R(r,s,0)}$}
      \put(17,17){\small$w_1=R(r,0,0)$}
      \put(-1,24){\small${\sW^\u_v}$}
      \put(89,54){\small$\sW^\s_v$}
      \put(71,54){\small$\pi\sW^\s_{w_1}$}
      \put(50,17){\small$v=R(0,0,0)$}
\end{overpic}
\begin{overpic}[scale=.48 
  ]{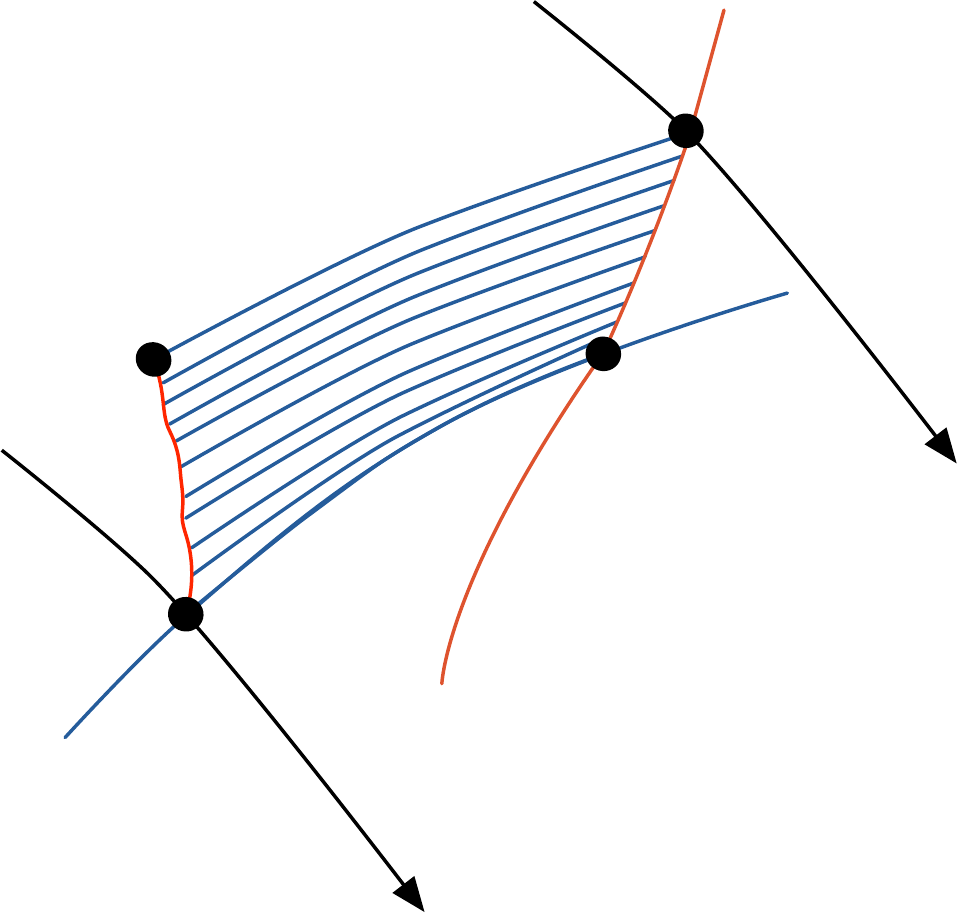}
  	\put(6,67){\small $[w_1,w_2]$}
  	\put(25,32){\small $w_1$}
  	\put(76,83){\small $w_2$}
  	\put(65,53){\small $v$}
  	\put(78.5,99){\small $\sW^\s_v$}
  	\put(10,18){\small $\sW^\u_v$}
\end{overpic}
\caption{Parametrization $R=R_v$ of the local cross section in a neighborhood of a vector $v$. Here $\pi$ denotes the projection of the centre stable leaf onto the cross section given by the local product structure.}
\label{fig:3}
\end{figure}
Recall that the bundle $F^\cs=F^\s\oplus V$ is integrable and invariant (Section~\ref{sec:invfol}). 
By the local product structure stated in Lemma~\ref{lem:locpro}, if $w_1=R_v(r,0,t)$ and $w_2=R_v(0,s,t)$ satisfy $\rho(w_1,w_2)\le \delta$ then there exists 
\[
	u
	\eqdef [w_1,w_2]\in\sW^\s_{g^\tau(w_1)}\cap \sW^\u_{w_2}
	= \sW^\cs_{w_1}\cap\sW^\u_{w_2}
\]	 
for some real number $\tau$, $\lvert \tau\rvert\le\varepsilon$, such that $\rho(u,g^\tau(w_1))\le\varepsilon$ and $\rho(u,w_2)\le\varepsilon$. To finish the definition of $R_v$, we let
\begin{itemize}
\item $R_v(r,s,t)\eqdef [R_v(r,0,t),R_v(0,s,t)]$.
\end{itemize}

Recall that $\sW^\u$ is a continuous foliation. Hence, $R_v$ is continuous and injective. By the Invariance of Domain theorem by Brouwer, $R_v(U)$ is open and $R_v$ is a homeomorphism between $U$ and $R_v(U)$.  

We now define 
\[
	D(v)
	\eqdef \left\{R_v(r,s,0)\colon (r,s)\in\left(-\frac\delta2,\frac\delta2\right)^2\right\}.
\]

\begin{remark}\label{rem:sizecharts}
Note that $D(v)$ contains $\sW^\u_{\delta/2}(v)$ and $\sW^\s_{\delta/2}(v)$ and is contained in $R_v(U)$. The map $(r,s)\mapsto R_v(r,s,0)$ defines a parametrization of $D(v)$. Note that $D(v)$ is foliated by Lipschitz curves which are properly contained in the leaves $\sW^\u_\varepsilon(R_v(0,s,0))$ of the unstable foliation.
\end{remark}

Recall that we denoted by $\rho$ the distance on $T^1M$ induced by the Riemannian metric on the surface $M$ and hence on the unit tangent bundle $T^1M$. Given a section $D(v)$, we consider the distance induced by $\rho$ on $D(v)$, for every $w,w'\in D(v)$ let
\[
	\rho_{D(v)}(w,w')
	\eqdef \rho(w,w').
\]

The following result is an immediate consequence of the construction and invariance properties of the unstable foliations (see Lemma~\ref{lem:invfol}  (ii)).

\begin{lemma}\label{lem:locinv}
	The family of cross sections $\{D(v)\}_{v\in T^1M}$ is locally invariant in the sense that for any $v$ and for any $t\in\bR$ the set $g^t(D(v))$  is foliated by leaves of the foliation $\sW^\u$ and contains a subset of the leaf $\sW^\s_{g^t(v)}$.
\end{lemma}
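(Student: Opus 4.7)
The plan is to unpack the parametrization $R_v$ of $D(v)$ and invoke the flow-invariance of the foliations $\sW^\u$ and $\sW^\s$ recorded in Lemma~\ref{lem:invfol}(iii). The whole assertion should follow essentially from the \emph{definition} of $D(v)$, once we isolate the two relevant slices of the parametrization.

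First I would recall from Section~\ref{sec:loccro} that $D(v)$ is the image of the square $(-\delta/2,\delta/2)^2$ under the map $(r,s)\mapsto R_v(r,s,0)$, where by construction
\[
	R_v(r,s,0) \eqdef [R_v(r,0,0), R_v(0,s,0)] \in \sW^\cs_{R_v(r,0,0)} \cap \sW^\u_{R_v(0,s,0)}.
\]
So for each fixed $s_0\in(-\delta/2,\delta/2)$, the arc
\[
	C_{s_0} \eqdef \{R_v(r,s_0,0)\colon r\in(-\delta/2,\delta/2)\}
\]
lies entirely in the single unstable leaf $\sW^\u_{R_v(0,s_0,0)}$, and the cross section decomposes as the disjoint union $D(v)=\bigsqcup_{s_0} C_{s_0}$. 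This decomposition is exactly the unstable foliation promised in Remark~\ref{rem:sizecharts}.

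Applying $g^t$ and using Lemma~\ref{lem:invfol}(iii), namely $g^t\sW^\u_w = \sW^\u_{g^t(w)}$, each arc $g^t(C_{s_0})$ lies in the unstable leaf $\sW^\u_{g^t(R_v(0,s_0,0))}$. Consequently,
\[
	g^t(D(v)) = \bigsqcup_{s_0\in(-\delta/2,\delta/2)} g^t(C_{s_0})
\]
is foliated by leaves of $\sW^\u$, which yields the first part of the lemma. For the second part I would isolate the transversal slice $s\mapsto R_v(0,s,0)$, which by construction parametrizes a piece of $\sW^\s_v$ contained in $D(v)$. Applying $g^t$ and again invoking Lemma~\ref{lem:invfol}(iii) (now for $\sW^\s$) gives
\[
	\{g^t(R_v(0,s,0))\colon s\in(-\delta/2,\delta/2)\} \subset g^t\sW^\s_v = \sW^\s_{g^t(v)},
\]
so $g^t(D(v))$ contains a subset of $\sW^\s_{g^t(v)}$, as claimed.

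There is no real obstacle here: the statement is essentially a bookkeeping consequence of how $R_v$ was defined (unstable curves in the $r$-direction, stable curve in the $s$-direction at $r=0$) combined with the $g^t$-invariance of the two foliations. The only mild point to mention is that one should verify each $g^t(C_{s_0})$ is indeed a subset of a \emph{leaf} (not just of the total foliation), but this is immediate from the identity $g^t\sW^\u_w=\sW^\u_{g^t(w)}$ applied at the basepoint $w=R_v(0,s_0,0)$.
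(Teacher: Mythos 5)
Your argument is correct and is essentially the paper's: the paper gives no separate proof, declaring the lemma an immediate consequence of the construction of $D(v)$ and the invariance of the foliations, which is exactly what you spell out by isolating the unstable arcs $r\mapsto R_v(r,s_0,0)$ and the stable slice $s\mapsto R_v(0,s,0)$ and applying $g^t\sW^\ast_w=\sW^\ast_{g^t(w)}$ from Lemma~\ref{lem:invfol}. The only cosmetic difference is that you invoke item (iii) of that lemma (the natural reference) where the paper cites item (ii).
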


\subsection{Family of disjoint local cross sections}

We will build on constructions in~\cite{BowWal:72}, for completeness we provide full details and adapt them to our needs. Our aim is to consider a \emph{common} family of cross sections which serves at the same time for the flow on $T^1M$ as well as for its restriction to a  compact invariant subset $X\subset T^1M$ (later we will assume $X$ to be a basic set).

If $S\subset X$ is a local cross section of time $\varepsilon$ for $g$, then $g^{(-\varepsilon,\varepsilon)}(S^{\ast X})$ is open for all $\varepsilon>0$ and $g^{[-\varepsilon,\varepsilon]}(S\setminus S^{\ast X})$ is closed and without interior. 
Given a family $\eS=\{S_k\}$ of local cross sections, denote 
\[
	\cup\eS\eqdef \bigcup_kS_k.
\]	 
Note that the flow $(x,t)\mapsto g^t(x)$ maps homeomorphically $S\times[-\zeta,\zeta]$ onto the compact set $g^{[-\zeta,\zeta]}(S)$.

\begin{proposition}\label{pro:comcrosec}
	There is $\zeta>0$ so that the following holds. For every $\hat v\in T^1M$ and every $\alpha\in(0,1)$ there is a finite family $\eS=\{S_k\}$ of pairwise disjoint sets $S_k$ so that:
\begin{itemize}
\item $S_1$ contains $\hat v$ and satisfies $\hat v\in{S_1}^\ast$, that is, $S_1$ contains a neighborhood of $\hat v$ (relative to the induced topology of $T^1M$ on $S_1$).
\item Each $S_k$ is a topological two-manifold which is foliated by unstable leaves,  is a local cross section of time $\zeta$, and has diameter at most $\alpha$. 
\item We have $T^1M=g^{[-\alpha,0]}(\cup\eS)=g^{[0,\alpha]}(\cup\eS)$.
\end{itemize}
	Moreover, if $X\subset T^1M$ is a compact set which is invariant, has topological dimension one, and contains $\hat v$, then $\eS_X\eqdef\{\tilde S_k\}$ with $\tilde S_k\eqdef S_k\cap X$ satisfies 
\[
	X= g^{[-\alpha,0]}(\cup \eS_X)
	=g^{[0,\alpha]}(\cup \eS_X)
\]	 
and we have $S=S^{\ast X}$ for each $S\in\eS_X$.
\end{proposition}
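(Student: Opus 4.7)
The plan is to carry out a Bowen--Walters-type construction~\cite{BowWal:72} of disjoint local cross sections, taking care that each remains foliated by unstable leaves (using the charts $D(v)$ from Section~\ref{sec:loccro}) and that their topological boundaries avoid $X$. First fix $\zeta>0$ small enough that the local product structure from Lemma~\ref{lem:locpro} applies at scale $\zeta$ and that $\zeta<\alpha/6$. Set $S_1\eqdef D(\hat v)$, choosing the parameter $\delta$ in its construction so that $\diam D(\hat v)<\alpha$ and $\hat v$ lies in its relative interior. Since for every $w\in T^1M$ the open flow tube $g^{(-\alpha/6,\alpha/6)}(\interior D(w))$ is open in $T^1M$, by compactness one selects $v_2,\ldots,v_N$ with $T^1M=\bigcup_{i=1}^N g^{(-\alpha/6,\alpha/6)}(\interior D(v_i))$, each $D(v_i)$ being a local cross section of time $\zeta$ of diameter less than $\alpha$.

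The crucial step is to render the $D(v_k)$ pairwise disjoint by an inductive trimming that preserves the unstable foliation. Having defined $S_1,\ldots,S_{k-1}$, set $T_{k-1}\eqdef g^{[-2\zeta,2\zeta]}(\bigcup_{j<k}S_j)$ and let $S_k$ be the closure of $D(v_k)\setminus T_{k-1}$. Because the flow preserves the unstable foliation (Lemma~\ref{lem:invfol}) and $D(v_k)$ is itself foliated by unstable leaves, the set $T_{k-1}\cap D(v_k)$ is a union of unstable leaf arcs; thus $S_k$ remains foliated by unstable leaves, closed, and a local cross section of time $\zeta$. The coverings $T^1M=g^{[-\alpha,0]}(\cup\eS)=g^{[0,\alpha]}(\cup\eS)$ are preserved because any $x\in T^1M$ meets some $D(v_k)$ in time less than $\alpha/6$ in either direction; if that meeting has been trimmed out, then $x$ already lies in the $2\zeta$-flow-tube of an earlier $S_j$, so a slightly adjusted time parameter (of absolute value at most $\alpha/6+2\zeta<\alpha$) lands on $S_j$.

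For the invariant subset $X$ of topological dimension one, the additional requirement is $\partial S_k\cap X=\emptyset$ for every $k$, where $\partial S_k$ denotes the topological boundary of $S_k$ viewed as a two-manifold with boundary in $T^1M$, a finite union of arcs. Since $X$ is closed of topological dimension one and $\partial S_k$ is a one-dimensional subset of the three-manifold $T^1M$, a generic small perturbation of the parametrizations of the $D(v_k)$ and of the trimming cuts achieves this disjointness while retaining the previously obtained properties. With this in place, any $x\in\tilde S_k\eqdef S_k\cap X$ lies in the relative interior of $S_k$ in $T^1M$, hence in $\interior g^{[-\zeta,\zeta]}(S_k)$; invariance of $X$ yields $g^{[-\zeta,\zeta]}(S_k)\cap X=g^{[-\zeta,\zeta]}(\tilde S_k)$, so $x\in\interior_X g^{[-\zeta,\zeta]}(\tilde S_k)$, giving $\tilde S_k=\tilde S_k^{\ast X}$. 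The equalities $X=g^{[-\alpha,0]}(\cup\eS_X)=g^{[0,\alpha]}(\cup\eS_X)$ follow from invariance: for $x\in X$ pick $y\in\cup\eS$ and $t\in[-\alpha,0]$ with $x=g^t(y)$; then $y=g^{-t}(x)\in X$, so $y\in\cup\eS_X$.

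The main obstacle is the inductive trimming: one must simultaneously preserve the unstable-leaf foliation on each $S_k$, the cross-section property at time $\zeta$, and the covering property at time $\alpha$. A secondary technical point is the perturbation argument for $\partial S_k\cap X=\emptyset$, which rests on the hypothesis that $X$ has topological dimension one and the fact that $\partial S_k$ is a one-dimensional arc-subset of the three-manifold $T^1M$.
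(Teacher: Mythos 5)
Your disjointification step is where the argument breaks. Setting $S_k\eqdef\overline{D(v_k)\setminus T_{k-1}}$ with $T_{k-1}=g^{[-2\zeta,2\zeta]}(\bigcup_{j<k}S_j)$ does not make the family pairwise disjoint: if $z\in\partial S_j\cap D(v_k)$ is a boundary point of an earlier section (boundary as a two-manifold), then $z$ lies on the topological boundary of the tube $g^{[-2\zeta,2\zeta]}(S_j)$, so points of $D(v_k)$ arbitrarily close to $z$ survive the trimming and $z\in S_k\cap S_j$. Moreover, the claim that $T_{k-1}\cap D(v_k)$ is a union of unstable plaques of $D(v_k)$ is unjustified: the unstable plaques of $g^t(S_j)$ and of $D(v_k)$ are different arcs inside the same global leaves and the hitting-time of the tube is not constant along the plaques of $D(v_k)$, so the closure of the trimmed set need be neither plaque-saturated nor a topological two-manifold. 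The paper avoids all of this by the Bowen--Walters device: no trimming occurs; instead each section $V_{v_k}$ is covered by finitely many closed pieces $Y_{v_\ell}$ together with time intervals $I_{v_\ell}$ such that $g^{I_{v_\ell}}(Y_{v_\ell})$ misses the previously constructed family, and the pieces are then flowed by distinct small times $r_\ell+s_\ell\in I_{v_\ell}$; disjointness is automatic (different heights in a flow box), and each new section is a time-translate of an honest sub-section, so the foliated two-manifold structure and the cross-section property are inherited, while the covering statements are recovered by composing with $g^{\pm 2\varepsilon}$ (your one-sided covering repair also needs this sign adjustment, though that part is fixable).

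The second, and more essential, gap is the treatment of $X$. The ``generic small perturbation'' making $\partial S_k\cap X=\emptyset$ has no justification: since $X$ is flow-invariant, $g^t(\partial S_k)\cap X=g^t(\partial S_k\cap X)$, so perturbing along the flow can never remove an intersection of a boundary with $X$; and perturbations inside the section live in a purely $C^0$ world (the foliations are only continuous), where no Sard/transversality theorem is available, while $X\cap D(v)$ could a priori be a rather wild compact set. The paper's key idea, which your proposal never uses, is dimension-theoretic: the flow embeds $(T_v\cap X)\times[-2\zeta,2\zeta]$ into the one-dimensional set $X$, so by Hurewicz's theorem on products the intersection $T_v\cap X$ is zero-dimensional; consequently one can choose an open-closed neighborhood $U_v$ of $v$ in $T_v\cap X$ and build the sections so that their traces on $X$ are relatively clopen, which gives $S=S^{\ast X}$ directly, with no boundary-avoidance argument at all. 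Without this (or at least a proof that $X\cap S_k$ is zero-dimensional followed by a genuine construction of cuts in its complement), the final assertion $S=S^{\ast X}$ for $S\in\eS_X$ remains unproved, and this is precisely the property needed later for the exact symbolic coding on the basic sets.
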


\begin{proof}
We consider the family of local cross sections $\{D(v)\}_{v\in T^1M}$ defined in Section~\ref{sec:loccro}.

For each $v\in T^1M$ we can choose a local cross section $S_v\subset D(v)$ of time $2\zeta_v>0$ so that $v\in {{ S}_{v}}^\ast$. We can assume that $S_v=D(v)\cap B(v,\varepsilon_v)$ for some $\varepsilon_v>0$. By compactness, there exist $\{v_1,\ldots,v_m\}$ so that
\[
	T^1M
	\subset \bigcup_{j=1}^mg^{(-\zeta_{v_j},\zeta_{v_j})}({S_{v_j}}^\ast).
\]
Let
\[
	\zeta
	\eqdef \min_{j=1,\ldots,m}\zeta_{v_j}.
\]
Thus, for every $v\in T^1M$ there is $v_j$ and $r_v\in(-\zeta_{v_j},\zeta_{v_j})$ so that $v\in g^{r_v}({S_{v_j}}^\ast)$ and $T_v\eqdef  g^{r_v}(S_{v_j})$ is a local cross section of time $\zeta_{v_j}\ge 2\zeta$ and $v\in {T_v}^\ast$. Note also that $T_v$ is contained in a topological two-manifold which is foliated by unstable leaves (Lemma~\ref{lem:locinv}).

For $X\subset T^1M$ one-dimensional the set $(T_v\cap X)\times[-2\zeta,2\zeta]\subset X$ is at most one-dimensional. By~\cite{Hur:35} it follows that $T_v\cap X$ is zero-dimensional. Note that $(T_v\cap X)^{\ast X}$ is an open subset of $T_v\cap X$ containing $v$. As $T_v\cap X$ is zero-dimensional, there exists an open-closed neighborhood $U_v$ of $v$ in $T_v\cap X$ which is contained in $(T_v\cap X)^{\ast X}$. Then the set $g^{(-2\zeta,2\zeta)}(U_v)$ is an open subset of $g^{(-2\zeta,2\zeta)}((T_v\cap X)^{\ast X})$ and, as the latter is an open subset of $X$, $g^{(-2\zeta,2\zeta)}(U_v)$ is also an open subset of $X$. Thus, $U_v$ is a local cross section of time $2\zeta$ which contains $v$ and satisfies ${U_v}^{\ast X}=U_v$. 

Given $\alpha\in(0,1)$, let $\varepsilon>0$ satisfying $\varepsilon\le \min\{\alpha/4,\zeta\}$ and $\diam g^r(A)<\alpha$ whenever $\lvert r\rvert\le \varepsilon$ and $\diam A<\varepsilon$. For each $v\in T^1M$ let $V_v\subset {T_v}^\ast$ be a closed set containing a neighborhood of $v$ in $T_v$ with $\diam V_v<\varepsilon$. Then $V_v$ is a local cross section of time $2\zeta$ and $v\in {V_v}^\ast$. 
As in the hypotheses of the proposition, given any vector $\widehat v\in T^1M$ , by  compactness of $T^1M$ there exist $\{v_2,\ldots,v_L\}$ so that
\[
	T^1M
	\subset g^{[-\varepsilon,\varepsilon]}(V_{\widehat v})
		\cup \bigcup_{i=2}^Lg^{[-\varepsilon,\varepsilon]}(V_{v_i}).
\]  

Note that the sets $V_{v_i}$ and the set $V_{\hat v}$ are not necessarily pairwise disjoint.
We now construct finite families $\eS_k$ of pairwise disjoint local cross sections recursively. 

Let $\eS_0\eqdef\{V_{\widehat v}\}$ and put $S_1=V_{\widehat v}$. Hence, in particular, we have $\widehat v\in{V_{\widehat v}}^\ast$, satisfying one of the assertions of the proposition. 

Given $k\in\{2,\ldots,L\}$, suppose that the family $\eS_{k-1}$ is already constructed. Consider the section $V_{v_k}$ and note that for each $v\in V_{v_k}$, $g^{[-\varepsilon,\varepsilon]}(v)\cap \cup\eS_{k-1}$ is a finite set of points (since $\eS_{k-1}$ is a finite family of local cross sections) and (by continuity of the flow and since $\cup\eS_{k-1}$ is a closed set)  there is an open interval $I_v\subset(-\varepsilon,\varepsilon)$ and a closed set $Y_v\subset V_{v_k}$ containing a neighborhood of $v$ in $V_{v_k}$ such that $g^{I_v}(Y_v)\cap\cup\eS_{k-1}=\emptyset$. Since $V_{v_k}$ is compact, there exists $\{v_1,\ldots,v_o\}$ such that 
\[
	V_{v_k}
	\subset \bigcup_{\ell=1}^oY_{v_\ell}.
\]
Let $\tau_k\eqdef\min_\ell\lvert I_\ell\rvert$. Since $V_{v_k}$ is a local cross section of time $\varepsilon\ge\tau_k$ and hence the flow defines locally a homeomorphism on $V_{v_k}\times[-\tau_k,\tau_k]$, we can choose distinct numbers $s_1,\ldots,s_o\in(0,\tau_k)$ such that $g^{[-s_\ell,s_\ell]}(Y_{v_\ell})$ are pairwise disjoint. Let $\zeta_k\eqdef\min_\ell s_\ell$.
Choose numbers $r_1,\ldots,r_o$ such that $r_1+s_1\in I_{v_1},\ldots,r_o+s_o\in I_{v_o}$ and let
\[
	\eS_k
	\eqdef\eS_{k-1}\cup\{g^{r_1+s_1}(Y_{v_1}),\ldots,g^{r_o+s_o}(Y_{v_o})\}. 
\]
Note that this is a family of pairwise disjoint sets each of them contained in topological two-manifold such that each is a local cross section of time $\zeta_k$.

Finally, let $\eS\eqdef \eS_L$ and $\zeta\eqdef\min_k\zeta_k$. We have $T^1M=g^{[-2\varepsilon,2\varepsilon]}(\cup\eS)$. Moreover, for every $v\in T^1M$ we have $g^{2\varepsilon}(x)\in g^{[-2\varepsilon,2\varepsilon]}(\cup\eS)$ and hence $v\in g^{[-4\varepsilon,0]}(\cup\eS)\subset g^{[-\alpha,0]}(\cup\eS)$.

To obtain pairwise disjoint local cross sections for $X$ we can proceed as above considering $V_v\subset U_v$ instead of $V_v\subset T_v$ and obtain the family $\eS_X$ with the claimed properties. Since $X$ is assumed to be one-dimensional, it follows that $(S_k\cap X)^{\ast X}=S_k\cap X$. 
\end{proof}

\begin{remark}
	Note that if $\alpha$ is sufficiently small (depending on the neighborhood which, according to Lemma~\ref{lem:locpro}, permits a parametrization using the local product structure) then for every $S_k\in\eS$ there exists $v_k\in T^1M$ such that $S_k\subset D(v_k)$, where $D(\cdot)$ is the family of local cross sections defined in Section~\ref{sec:loccro}. 
\end{remark}

\subsection{Symbolic coding of the flow}

We continue to follow very closely~\cite[Section 5]{BowWal:72}.
Let $\zeta\in(0,1)$ be as in Proposition~\ref{pro:comcrosec} and choose $\alpha<\zeta$ and let $\eS=\{S_k\}_{k=1}^L$ be a family of pairwise disjoint local cross sections as in Proposition~\ref{pro:comcrosec}.
Let
\begin{equation}\label{e.lozin}
	W\eqdef
	\Big\{ v\in T^1M\colon 
		g^t(v)\cap\big(\bigcup_kS_k\setminus S_k^\ast\big)
		=\emptyset\text{ for all }t\Big\}.
\end{equation}
Note that, by definition, we have $g^t(W)=W$ for all $t$.

For every vector $v\in W\cap\cup\eS$ let $(t_j(v))_{j\in\bZ}$
\[
\ldots<t_{-1}(v)<t_0(v)=0<t_1(v)<\ldots
\]
be the doubly infinite sequence of all \emph{transition times} $t\in\bR$ so that $g^t(v)\in\cup\eS$. In the following we will simply write $t(v)\eqdef t_1(v)$. Since $\eS$ is a family of pairwise disjoint compact local cross sections of time $\zeta>\alpha$, there exists a number $\beta\in(0,\alpha)$ such that the difference of the transition times between consecutive sections must be within the interval $[\beta,\zeta]$, that is, $\beta\le t_{j+1}(v)-t_j(v)\le \zeta$ for all $j$.
Define the \emph{transition map} between sections 
\[	
	\cT\colon W\cap\cup\eS \to W\cap\cup\eS
		\quad\text{ by }\quad\cT(v)\eqdef 
		g^{t(v)}(v).
\]

Consider the shift space $\Sigma_L\eqdef\{1,\ldots,L\}^\bZ$ on which we introduce the metric $d_a(\underline i,\underline i')\eqdef a^{-m}$, $a>1$, where $m=m(\underline i,\underline i')$ is the largest integer such that $i_j=i_j'$ for all $\lvert j\rvert\le m$.  Define
\begin{equation}\label{e.lozinQ}
\begin{split}	
	Q\colon W\cap\cup\eS& \to\Sigma_L\\
	Q(v) \eqdef (\ldots i_{-1}i_0i_1\ldots)\in\Sigma_L
	\,&\text{ so that }\, g^{t_j(v)}(v)\in S_{i_j}\text{ for all }j\in\bZ.
\end{split}
\end{equation}
Consider the usual left shift $\sigma\colon \Sigma_L\to\Sigma_L$. Note that 
\[
	\sigma(Q(W\cap\cup\eS))= Q(W\cap\cup\eS)
\]	 
and that
\begin{equation}\label{e.ddlo}
	\Sigma_Q\eqdef \overline{Q(W\cap\cup\eS)}\subset \Sigma_L
\end{equation}
 is compact and satisfies $\sigma(\Sigma_Q)=\Sigma_Q$. Observe, however, that $\sigma\colon\Sigma_Q\to\Sigma_Q$ is in general not a subshift of finite type.

Let $\Psi$ be the set of all pairs $(v,\underline i)\in\cup\eS\times\Sigma_L$ for which there exists a sequence $\underline t(v,\underline i)=(t_j)_{j\in\bZ}$ such that $t_0=0$, $t_{j+1}-t_j\in[\beta,\zeta]$ for all $j\in\bZ$, and $g^{t_j}(v)\in S_{i_j}$ for all $j\in\bZ$. By~\cite[Lemma 8]{BowWal:72}, $\Psi\subset\cup\eS\times\Sigma_L$ is closed, for each $(v,\underline i)\in\Psi$ the so defined two-sided sequence of real numbers $\underline t(v,\underline i)$ is unique, and 
\[
	\underline t\colon\Psi\to\bR^\bZ,
	\quad
	(v,\underline i)\mapsto \underline t(v,\underline i)=(t_j)_{j\in\bZ}
\]
is continuous.
Denote by $\pi_1\colon \cup\eS\times\Sigma_L\to\cup\eS$ and $\pi_2\colon\cup\eS\times\Sigma_L\to\Sigma_L$ the natural projections to the first and second coordinate, respectively.
Let $\tau\colon \Sigma_Q\to (0,\infty)$ be defined by
\begin{equation}\label{eq:deftau}
	\tau(\underline i)
	\eqdef (\underline t((\pi_2|_\Psi)^{-1}(\underline i)))_1
	= (\underline t(v,\underline i))_1.
\end{equation}
By the above, $\tau$ is continuous. Let
\[
	\tau_{\rm min}
	\eqdef \min_{\underline i\in\Sigma_Q}\tau(\underline i)>0.
\]
Moreover, by~\cite[Lemma 9]{BowWal:72}, there is a continuous map 
\begin{equation}\label{eq:defpi}
	\Pi\colon \Sigma_Q\to \cup\eS
\end{equation}
so that  $\Pi Q(v)=v$ for every $v\in W\cap\cup\eS$, $(\Pi(\underline i),\underline i)\in \Psi$ for $\underline i\in\Sigma_Q$, and $\Pi$ is one-to-one over $W\cap\cup\eS$. More precisely (see~\cite[Proof of Lemma 9]{BowWal:72}), $\Pi$ is defined by $\Pi(\underline i)\eqdef \pi_1((\pi_2|_\Psi)^{-1}(\underline i))$.

Given $m\ge0$ and a finite sequence $(j_{-m}\ldots j_m)\subset\{1,\ldots,L\}^{2m+1}$, we consider the usual cylinder set $[j_{-m}\ldots j_m]\eqdef\{\underline i\colon i_k=j_k\text{ for all }k=-m,\ldots,m\}$. Correspondingly, let
\[
	S_{j_{-m}\ldots j_m}
	\eqdef \Pi([j_{-m}\ldots j_m])
\]
denote a ``cylinder local cross section''. 

\begin{remark}[Common transition times]\label{rem:invtratime}
 	Note that by choice of the family of local cross sections, we have $S_{i_{-m}\ldots i_m}\subset S_{i_0}\subset D(v_{i_0})$. Moreover, recalling Lemma \ref{lem:locinv}, we observe that  the transition time from the cylinder local cross section $S_{i_{-m}\ldots i_m}\subset S_{i_0}$ to the local cross section $S_{i_m}$ is constant on $S_{i_{-m}\ldots i_m}$ and, in particular,  $g^{t_m(v_{i_0})}(S_{i_{-m}\ldots i_m})\subset S_{i_m}$. Analogously,  the transition time from  $S_{i_{-m}\ldots i_m}$ to $S_{i_{-m}}$ is constant on $S_{i_{-m}\ldots i_m}$ and  $g^{t_{-m}(v_{i_0})}(S_{i_{-m}\ldots i_m})\subset S_{i_{-m}}$. 
\end{remark}

The following is an immediate consequence of expansivity of the flow $G$.

\begin{lemma}\label{lem:expansive}
	For every $\underline i\in\Sigma_Q$ we have 
\[
	\lim_{m\to\infty}\diam S_{i_{-m}\ldots i_m}=0.
\]	
\end{lemma}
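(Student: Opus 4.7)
The plan is an argument by contradiction using expansivity of the geodesic flow. Suppose that $\diam S_{i_{-m}\ldots i_m}\not\to 0$; then there exist $c>0$ and sequences $v_m,w_m\in S_{i_{-m}\ldots i_m}$ with $\rho(v_m,w_m)\ge c$ for all $m$. Denote the transition times of $v_m$ and $w_m$ by $(t_k^{v,m})_{k\in\bZ}$ and $(t_k^{w,m})_{k\in\bZ}$; they satisfy $g^{t_k^{v,m}}(v_m),g^{t_k^{w,m}}(w_m)\in S_{i_k}$ for every $\lvert k\rvert\le m$, with consecutive gaps in $[\beta,\zeta]$. By compactness of $T^1M$ and a diagonal extraction, pass to a subsequence along which $v_m\to v^*$, $w_m\to w^*$, and $t_k^{v,m}\to t_k^v$, $t_k^{w,m}\to t_k^w$ for every $k\in\bZ$. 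Then $\rho(v^*,w^*)\ge c$, the limit orbits visit $\overline{S_{i_k}}$ at the respective times, and $t_k^v,t_k^w\to\pm\infty$ as $k\to\pm\infty$.

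Next, I define a continuous surjection $\varrho\colon\bR\to\bR$ with $\varrho(0)=0$ by piecewise linear interpolation $\varrho(t_k^v)=t_k^w$. The key estimate is $\rho(g^t(v^*),g^{\varrho(t)}(w^*))\le\varepsilon_0$ for every $t\in\bR$, where $\varepsilon_0$ is an expansivity constant. For $t\in[t_k^v,t_{k+1}^v]$, writing $v_k^*=g^{t_k^v}(v^*)$, $w_k^*=g^{t_k^w}(w^*)$, both in $\overline{S_{i_k}}$ at distance $\le\alpha$, the triangle inequality gives
\[
\rho(g^t(v^*),g^{\varrho(t)}(w^*))\le K\alpha+\lvert s-s'\rvert,
\]
where $s=t-t_k^v$, $s'=\varrho(t)-t_k^w$; here the first term uses uniform continuity of the flow on $T^1M\times[0,\zeta]$ with constant $K=K(\zeta)$, and the second uses that $\|V\|\equiv 1$. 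To bound $\lvert s-s'\rvert$, observe that $g^{a}(v_k^*)$ and $g^{b}(w_k^*)$ (with $a=t_{k+1}^v-t_k^v$, $b=t_{k+1}^w-t_k^w$) both belong to $\overline{S_{i_{k+1}}}$; transversality of the cross sections $S_k\subset D(v_k)$ to the flow (with uniform lower bound on the transversality angle, from the construction in Section~\ref{sec:loccro}) yields $\lvert a-b\rvert\le C\alpha$ and hence $\lvert s-s'\rvert\le C'\alpha$. Choosing $\alpha$ small at the outset of the construction (permissible since $\zeta$ in Proposition~\ref{pro:comcrosec} is fixed independently of $\alpha$), the total bound is at most $\varepsilon_0$.

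Expansivity then furnishes $\tau\in\bR$ with $\lvert\tau\rvert<\varepsilon_0$ and $g^\tau(v^*)=w^*$. Arranging in advance that $\varepsilon_0<\zeta$ (any smaller value is still an expansivity constant), the cross-section property $S_{i_0}\cap g^{[-\zeta,\zeta]}(v^*)=\{v^*\}$ together with $v^*,w^*\in\overline{S_{i_0}}$ forces $\tau=0$ and $v^*=w^*$, contradicting $\rho(v^*,w^*)\ge c>0$. The main technical obstacle is the transversality bound $\lvert a-b\rvert\le C\alpha$ relating the transition times of the two limit orbits; the subsequence extraction, the construction of $\varrho$, and the cross-section property are routine.
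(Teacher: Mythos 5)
Your overall strategy -- extracting limit points with a common bi-infinite itinerary, building a reparametrization that matches the hitting times, invoking expansivity, and finishing with the cross-section property of time $\zeta$ -- is precisely the argument the paper has in mind when it declares the lemma an immediate consequence of expansivity (the paper gives no written proof), and most of your steps are carried out correctly. The one step whose justification does not hold up is the bound $\lvert a-b\rvert\le C\alpha$ obtained from a ``uniform lower bound on the transversality angle'' of the cross sections. No such quantitative transversality is established anywhere in the paper, and it is not available in this setting: the sections $S_k\subset D(v_k)$ are only topological two-manifolds, assembled from the invariant foliations $\sW^\s,\sW^\u$ via the bracket $[\cdot,\cdot]$ of the local product structure, and these objects are merely continuous (the introduction explicitly points out that the regularity of this structure is at best H\"older); there is no smooth or uniformly Lipschitz transversal structure from which an angle estimate, and hence a hitting-time comparison proportional to $\alpha$, could be extracted. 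As justified, that step would fail.

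Fortunately the estimate you need comes for free, so the gap is easily repaired. By the covering property $T^1M=g^{[0,\alpha]}(\cup\eS)$ of Proposition~\ref{pro:comcrosec}, consecutive hitting times of $\cup\eS$ for points of $W\cap\cup\eS$ differ by at most $\alpha$ (and at least $\beta$); by density of $Q(W\cap\cup\eS)$ in $\Sigma_Q$ and continuity of $\underline t$ on $\Psi$ (\cite[Lemma 8]{BowWal:72}) this upper bound persists for the time sequences associated with all points coded by $\Sigma_Q$, hence for your limit gaps. Thus $a,b\in[\beta,\alpha]$ and $\lvert s-s'\rvert\le\lvert a-b\rvert\le\alpha$ outright, so your total bound becomes $(K+1)\alpha$, which is below the expansivity constant once $\alpha$ is chosen small -- the same smallness assumption you already impose (and which is consistent with how the lemma is used, since in the proof of Proposition~\ref{prop:boundary} one takes $\alpha=\delta<\varepsilon/3$). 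With this replacement your proof is complete. Note also that a shorter route is available from material already quoted in the paper: $\Pi$ is continuous on the compact set $\Sigma_Q$ by \cite[Lemma 9]{BowWal:72}, hence uniformly continuous, and since the cylinder $[i_{-m}\ldots i_m]$ has $d_a$-diameter $a^{-m}\to0$, the sets $S_{i_{-m}\ldots i_m}=\Pi([i_{-m}\ldots i_m])$ shrink uniformly; your argument in effect re-proves directly the expansivity input hidden in that continuity statement.
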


\subsection{Symbolic coding of the flow on a basic set}\label{sec:symcodflobas}

Consider any compact invariant one-dimensional set $X\subset T^1M$ and let $\eS_X=\{\tilde S_k\}=\{S_k\cap X\}$ as in Proposition~\ref{pro:comcrosec}.
Analogously to~\eqref{e.lozin}, let
\[
	W_X\eqdef
	\Big\{ v\in X\colon 
		g^t(v)\cap\big(\bigcup_k\tilde S_k\setminus {\tilde S_k}^{\ast X}\big)
		=\emptyset\text{ for all }t\Big\}.
\]
Note that $g^t(W_X)=W_X$ for all $t$ and that, by the second claim of Proposition~\ref{pro:comcrosec}, 
\[
	W_X=X.
\]
Analogously to~\eqref{e.lozinQ}, define 
\[
	Q_X\colon W_X\cap\cup\eS_X \to\Sigma_L
\]
and, analogously to~\eqref{e.ddlo}, define
\[
	\Sigma^X
	\eqdef\overline{Q_X\big(W_X\cap\cup\eS_X\big)}
	\subset\Sigma_L.
\]
Note that
\[
	\sigma(\Sigma^X)=\Sigma^X\subset\Sigma_Q.
\]
In particular, $\underline t(v,\underline i)$ is well-defined for each $(v,\underline i)\in\Psi\cap(\cup\eS_X\times\Sigma^X)$ and likewise $\Pi(\underline i)$ is well-defined for each $\underline i\in\Sigma^X$.

By~\cite[Lemma 9]{BowWal:72}, $\Pi\colon\Sigma_Q\to\cup\eS$ is continuous  and a bijection between $\Sigma^X$ and its image $\Pi(\Sigma^X)$ and hence $\Pi|_{\Sigma^X}$ is a homeomorphism between $\Sigma^X$ and its image. 

Recall that a compact invariant set $\tilde\Sigma\subset\Sigma_L$ is a \emph{subshift of finite type} (\emph{SFT}) if it is determined by specifying blocks of finite length, that is, there exist $N\ge 1$ and a finite set of (``forbidden'') $N$-blocks $\cF=\{(i_1\ldots i_N)\}\subset\{1,\ldots,L\}^N\}$ such that 
\[
	\tilde\Sigma=\tilde\Sigma(\cF)
	=\{\underline i\in\Sigma_L
		\colon (i_k\ldots i_{k+N-1})\not\in\cF
		\text{ for all }k\in\bZ\},
\]	 
and we call $N$ the \emph{admissible length}.
For the following classical result see also  \cite{Bow:72b}.

\begin{lemma}
	Let $X\subset T^1M$ be a basic one-dimensional set.
	Then $\Sigma^X$ is a SFT with the admissible length depending on $X$ (which can be chosen arbitrarily large).
\end{lemma}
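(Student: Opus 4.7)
The plan is to exploit the uniform hyperbolicity of the basic set $X$ to promote the general coding of Bowen--Walters to a Markov coding on a sufficiently refined partition, and then read off the SFT property. By Remark~\ref{bbasic}, on (a neighborhood of) $X$ the distributions $F^\s$ and $F^\u$ form a continuous hyperbolic splitting with uniform contraction/expansion constants $c,\lambda$ and a uniform transversality. I would first shrink the parameter $\alpha$ in Proposition~\ref{pro:comcrosec} below the local product scale of $X$, so that each $\tilde S_k=S_k\cap X$ inherits from $X\cap D(v_k)$ a local product structure: every point of $\tilde S_k$ is uniquely recoverable from its local stable and local unstable coordinates in $X$. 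Here I also use that $\tilde S_k$ is zero-dimensional, since $X$ has topological dimension one.

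Next I would pass to the $N$-cylinder refinement and establish a Markov property. By Lemma~\ref{lem:expansive}, the cylinders $\tilde S_{i_{-N}\ldots i_N}$ have diameter going to zero with $N$, and by hyperbolicity the forward cylinders $\tilde S_{i_0\ldots i_N}$ shrink only in the unstable direction while the backward cylinders $\tilde S_{i_{-N}\ldots i_0}$ shrink only in the stable direction. Combined with the local product structure, this shows that for all sufficiently large $N$ each $N$-cylinder is (relative to $X$) a product of a local unstable piece and a local stable piece of definite size. Because $S_k$ is already foliated by unstable leaves (Lemma~\ref{lem:locinv}) and the transition time across cylinders is constant (Remark~\ref{rem:invtratime}), the image $\cT(\tilde S_{i_{-N}\ldots i_N})$ fully crosses the adjacent cross section in the unstable direction whenever the corresponding block extension is admissible. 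This is the usual Markov condition.

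With the Markov property in place, the SFT assertion follows by a standard intersection argument: if every consecutive pair of $N$-cylinder labels of a sequence $\underline i$ is admissible, one builds a point of $X$ with code $\underline i$ as the limit of a nested family of nonempty compact ``rectangle'' intersections, using forward $\cT$-iterates to pin down the unstable coordinate and backward iterates to pin down the stable coordinate; the intersection is a single point by uniform hyperbolicity and expansivity (Lemma~\ref{lem:expansive}). Translated back to the original alphabet $\{1,\dots,L\}$, this exhibits $\Sigma^X$ as an SFT of admissible length $N$, and since $N$ may be taken as large as we wish this yields the ``arbitrarily large'' clause.

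The principal obstacle I anticipate is rigorously establishing the Markov property after refinement; in particular, that $\cT$-images of cylinders cross full unstable strips in the subsequent cross section and contain full stable strips coming from the previous one. The unstable alignment is essentially automatic from the construction of $\{S_k\}$ (Lemma~\ref{lem:locinv}), but the stable alignment of cylinder boundaries requires care. The cleanest way to handle this is to invoke the zero-dimensionality of $\tilde S_k$: one can choose open-closed $N$-cylinder ``rectangles'' whose boundaries in $X$ are empty in the relative topology, thereby sidestepping Bowen's classical boundary adjustment entirely and obtaining a genuine finite Markov partition for the transition map $\cT|_X$, from which the SFT property for $\Sigma^X$ follows immediately.
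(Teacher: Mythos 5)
The paper offers no argument here at all: it records the lemma as a classical fact and points to Bowen's paper on one-dimensional hyperbolic sets for flows \cite{Bow:72b}, so the only fair comparison is with that classical argument, which your sketch follows in outline (zero-dimensional clopen sections, local product structure, hyperbolicity, expansivity, nested cylinder intersections). The overall route is therefore the right one, but as written there is a genuine gap at exactly the step you yourself flag as the principal obstacle, and your proposed fix does not close it. The assertion that $\cT(\tilde S_{i_{-N}\ldots i_N})$ ``fully crosses'' the next section in the unstable direction does not follow from the sections being foliated by unstable leaves (Lemma~\ref{lem:locinv}) together with locally constant transition times (Remark~\ref{rem:invtratime}); and zero-dimensionality of $\tilde S_k$ only yields that cylinders can be taken clopen, hence pairwise at positive distance and free of boundary pathologies --- it does not, by itself, produce a Markov partition or any crossing property. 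Consequently the nested compact intersections in your third paragraph are not known to be nonempty: nonemptiness of $\bigcap_k \tilde S_{i_{-k}\ldots i_k}$ for every locally admissible $\underline i$ \emph{is} the finite-type property, so at that point you are assuming what has to be proved; hyperbolicity and expansivity (Lemma~\ref{lem:expansive}) only give uniqueness of the limit point, not existence.

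The missing idea is the gluing step. Given a sequence all of whose $N$-blocks are realized by orbits of $X$, one must produce a single orbit of $X$ realizing the whole sequence: pick points of $X$ realizing the successive central blocks, observe that consecutive ones are close (small cylinder diameter for $N$ large), and combine them using the local product structure \emph{of the basic set} --- here local maximality of $X$ (Remark~\ref{bbasic}) is what guarantees the bracket $[\,\cdot\,,\cdot\,]$ of nearby points of $X$, equivalently the shadowing orbit of the concatenated pseudo-orbit, again lies in $X$. Expansivity together with the positive separation of the clopen cylinders then identifies the code of this orbit with the prescribed sequence, which is where your clopenness observation genuinely earns its keep. With that argument supplied, the transition rule ``every $N$-block (or every admissible overlap of consecutive $N$-blocks) occurs in $\Sigma^X$'' defines $\Sigma^X$ as an SFT, and the ``arbitrarily large admissible length'' clause is immediate since a shift defined by forbidden words of length $N$ is also defined by forbidden words of any larger length.
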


\subsection{Suspension flow}\label{subsec.symflo}

Consider the compact metric space $(\Sigma_Q,d)$ and the homeomorphism $\sigma|_{\Sigma_Q}\colon \Sigma_Q\to \Sigma_Q$ (which we continue to denote simply by $\sigma$). 
Consider the \emph{height function} defined in~\eqref{eq:deftau} and recall that $\tau$ is continuous. The \emph{suspension of $\sigma|_{\Sigma_Q}$ under $\tau$} is the flow $F=(f^t)_t$ on the space
\[
	\Sigma_Q(\sigma,\tau)
	\eqdef
	{\Sigma_Q\times[0,\alpha]}/\sim,
\]
where $\sim$ is the identification of $(\underline i,\tau(\underline i)+s)$ with $(\sigma(\underline i),s)$ for all $s\ge0$ such that $\tau(\underline i)+s\le\alpha$, defined by 
\[
	f^r(\underline i,s)\eqdef (\underline i,r+s)
	\quad\text{ for any }\quad
	0\le r+s<\tau(\underline i).
\]	 
The space $\Sigma_Q(\sigma,\tau)$ is a compact metrizable metric space (see~\cite[Section 2]{BowWal:72} for a metric).
Recall the definition of $\Pi$ in~\eqref{eq:defpi}. Let $p\colon 	\Sigma_Q(\sigma,\tau)\to M$ be defined by 
\begin{equation}\label{eq:factor}
	p(\underline i,s)\eqdef g^s(\Pi(\underline i)).
\end{equation}
By~\cite[Theorem 10]{BowWal:72}, the flow $G$ on $T^1M$ is the factor of the suspension flow $F$ on $\Sigma_Q(\sigma,\tau)$, that is, we have
\[
	p\circ f^t
	= g^t\circ p,
\]
with the factor map $p$ being a homeomorphism between  invariant Baire sets.

\begin{remark}[basic sets]\label{rem:bassetsus}
Given a basic sect $X\subset T^1M$ and its associated SFT $\Sigma^X\subset\Sigma_Q$ considered in Section~\ref{sec:symcodflobas}, there is the analogously defined suspension of this SFT under $\tau$ and the corresponding flow on the space 
\[
	\Sigma^X(\sigma,\tau)
	\eqdef \Sigma^X\times[0,\alpha]/\sim.
\]
This suspension flow is simply the restriction of the suspension flow $F$ to $\Sigma^X(\sigma,\tau)$.
By~\cite[Theorem 10]{BowWal:72}, the flow $G$ restricted to $X$ is the factor of the restricted suspension flow $F$ on $\Sigma^X(\sigma,\tau)$ by means of the factor map $p$, with $p$ being a homeomorphism between $\Sigma^X(\sigma,\tau)$ and $X$. Observe that here we use the hypothesis that $X$ is one-dimensional.
By~\cite[Theorem 1]{Bow:72b}, the geodesic flow $G=(g^t)_t$ restricted to any basic set is isomorphic to a hyperbolic symbolic flow, that is, to a suspension of a SFT under a Lipschitz continuous height function. 
\end{remark}

\subsection{Measures and potentials associated to the suspension}
Based on the above, we can now rely on the theory developed in~\cite{BowWal:72,Bow:72b,Bow:73,BowRue:75}.
There is a canonical identification between invariant measures for the suspension flow  and the invariant measures for the subshift in the sense that
for any $\nu\in\cM(\sigma|_{\Sigma_Q})$ and the Lebesgue measure $m$ on $\bR$, the measure $\mu_\nu$ defined by 
\begin{equation}\label{eq:defmeaa}
	\mu_\nu
	\eqdef \frac{1}{(\nu\times m)(\Sigma_Q(\sigma,\tau))}
			(\nu\times m)|_{\Sigma_Q(\sigma,\tau)}
\end{equation}
 is a probability measure on $\Sigma_Q(\sigma,\tau)$ and the identifications take place on a set of measure zero. Moreover, $\mu_\nu$ is invariant under the suspension flow $F=(f^t)_t$ and the map $\nu\mapsto \mu_\nu$ between $\cM(\sigma|_{\Sigma_Q})$ and $\cM(F|_{\Sigma_Q(\sigma,\tau)})$ is one-to-one.
By Abramov's theorem~\cite{Abr:59}, for any $\nu\in\cM(\sigma|_{\Sigma_Q})$ and the corresponding measure $\mu_\nu\in\cM(F|_{\Sigma_Q(\sigma,\tau)})$ we have
\[
	h(F,\mu_\nu)
	=\frac{h(\sigma,\nu)}{\int\tau\,d\nu}.
\]

Note that,  on a basic set $X\subset T^1M$, by the above mentioned factor map $p$ defined in \eqref{eq:factor} between the  associated suspension flow $F$ on $\Sigma^X(\sigma,\tau)$ and the flow $G$ (see Section~\ref{subsec.symflo}), there is also a bijection between $\cM(F|_{\Sigma^X(\sigma,\tau)})$ and $\cM(G|_X)$.
More precisely, recall again that the flow $G|_X$ is the factor $p$ of the restricted suspension flow $F$ on $\Sigma^X(\sigma,\tau)$, with the factor map $p$ being a homeomorphism between $\Sigma^X(\sigma,\tau)$ and $X$ (recall Remark~\ref{rem:bassetsus}). If $\lambda\in\cM(G|_X)$ is the equilibrium measure of some H\"older continuous potential $\phi$ on $X$, then 
\[
	\mu
	=p^\ast\lambda
	\eqdef\lambda\circ p\in\cM(F|_{\Sigma^X(\sigma,\tau)})
\]	 
is the (unique) equilibrium measure for $\phi^\ast\eqdef\phi\circ p$. Note that both measures have the same entropy and that we have
\[
	P_{G|_X}(\phi)
	=h(G,\lambda)+\int\phi\,d\lambda
	=h(F,\mu)+\int\phi^\ast\,d\mu
	=P_{F|_{\Sigma^X(\sigma,\tau)}}(\phi^\ast).
\]
Moreover,  $\mu=\mu_\nu$ as for~\eqref{eq:defmeaa}, where $\nu\in\cM(\Sigma^X)$ is the  equilibrium measure of the potential $\Delta_\phi- P_{G|_X}(\phi)\cdot\tau$, where $\Delta_\phi\colon\Sigma^X\to\bR$ is defined by
\begin{equation}\label{laut}
	\Delta_\phi(\underline i)\eqdef 
	\int_0^{\tau(\underline i)} \phi\big({g}^s(\Pi(\underline i))\big)\,ds 
\end{equation}
 (see~\cite[Proposition 3.1]{BowWal:72}).
Together with Fubini's theorem, we hence obtain
\[
	\int\phi\,d\lambda
	= \int\phi^\ast\,d\mu_\nu
	= \frac{\int\Delta_\phi\,d\nu}{\int\tau\,d\nu}.
\]	

Observe that
\[
	\sum_{k=0}^{m-1}\Delta_\phi(\sigma^k(\underline i)) 
	= \int_0^{ T_{ m}(\underline i)}
		\phi\big({g}^t(\Pi(\underline i))\big)\,dt, 
\]
where $T_m(\underline i)$ denotes the  \emph{forward finite transition times of level $m$} at $\underline i$ defined by
\begin{equation}\label{defTm} 
	T_{m}(\underline i) \eqdef 
	\sum_{k=0}^{m-1}\tau(\sigma^k(\underline i)).
\end{equation}	

\section{Bridging measures on one-sided shift spaces}\label{sec:bridge}

We construct a Borel probability measure $\nu$ on the space $\Sigma_L^+$ that we call a \emph{bridging measure} which has the property that the orbit of a $\nu$-typical point has  -- on a given sequence of finite time-intervals --   ``finite-time entropies'' and ``finite-time averages'' that are very close to the entropy and ergodic averages of the measures $\nu_\ell$ from a given sequence $(\nu_\ell)_\ell$. Here we follow constructions in~\cite{GelRam:09,GelPrzRam:10}.
Such a bridging measure  mimics asymptotically the asymptotic behaviour of $\nu_\ell$ as $\ell\to\infty$.  Note that, in general, this measure is not invariant with respect to the shift $\sigma\colon\Sigma_L^+\to\Sigma_L^+$. 

The constructions and results in this section are completely general and  independent from the remaining sections of this paper. 
Since in the entire section we consider the one-sided shift space $\Sigma_L^+=\{0,\ldots,L\}^\bN$, to simplify notation we skip the symbol ${}^+$ in the notation of subspaces,  sequences, cylinders, etc.

\subsection{Construction of a bridging measure $\nu$ on $\Sigma_Q$}

We make the following hypothesis.
\smallskip

\noindent
\textbf{Hypothesis 1 -- Symbolic part:} 
Let $\Sigma_Q\subset\Sigma_L^+$ be a closed $\sigma$-invariant set and consider an increasing family $(\Sigma^\ell)_{\ell\ge1}$ of subshifts of finite type
\[
\Sigma^1\subset\Sigma^2\subset\ldots\subset\Sigma_Q\subset\Sigma_L^+.
\]
Consider a sequence $(\nu_\ell)_\ell$ of Borel probability measures each being the equilibrium measure of some H\"older continuous potential $\phi_\ell\colon\Sigma^\ell\to\bR$ with respect to $\sigma|_{\Sigma^\ell}$.
Without loss of generality%
\footnote{Observe that the claimed result does, in fact, not depend on the actual potential but only on the associated equilibrium measure $\nu_\ell$. Hence, without changing the equilibrium measure $\nu_\ell$, we can assume that the potential $\phi_\ell$  satisfies 
\begin{equation}\label{e.nanana}
	P_{\sigma|_{\Sigma^\ell}}(\phi_\ell)
	= h(\nu_\ell)+\int\phi_\ell\,d\nu_\ell = 0.
\end{equation}
Indeed, otherwise we can replace $\phi_\ell$ by the potential $\widetilde\phi_\ell=\phi_\ell-P_{\sigma|_{\Sigma^\ell}}(\phi_\ell)$ and observe that the latter has the same equilibrium measure such as the former and that its topological pressure is zero.}%
, we can assume that 
\[
	P_{\sigma|\Sigma^\ell}(\phi_\ell)=0. 
\]
In particular, $\nu_\ell$ is a Gibbs measure (see~\cite[Chapter 4]{Bow:75}) and hence gives positive measure to any finite-level cylinder intersecting $\Sigma^\ell$.

Let $\Delta\colon\Sigma_Q\to\bR$ be a continuous function.
Let $\underline a\in\Sigma^1$. Let $m_1$ some positive integer.

\begin{figure}
\begin{overpic}[scale=.48 
  ]{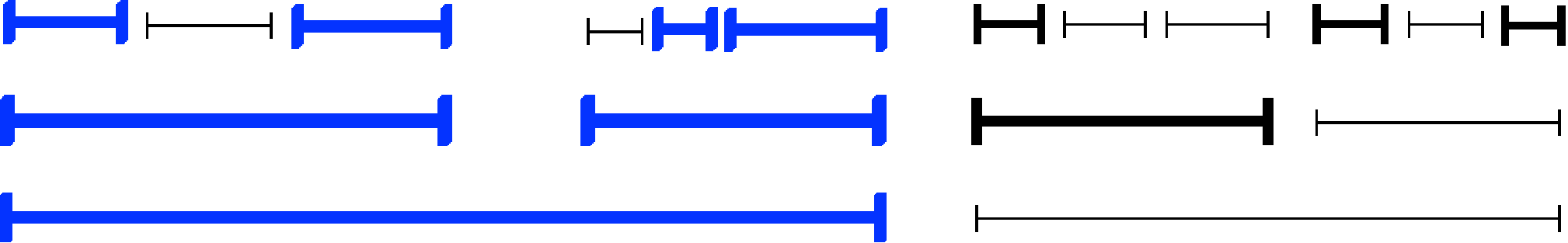}
        \put(105,1){\tiny$\Sigma_{m_1}^1$}
        \put(105,7){\tiny$\Sigma_{m_2}^2$}
        \put(105,13){\tiny$\Sigma_{m_3}^3$}
\end{overpic}
\caption{Schematic construction of $\nu$: $m_\ell$-cylinders which intersect $\Sigma^\ell$ (bold), cylinders on which $\nu$ is distributed (bold blue), $\ell=1,2,3$}
\label{fig:2}
\end{figure}

Let $(m_\ell)_{\ell\ge2}$ be a (sufficiently rapidly, as specified below) increasing sequence of positive integers. 
We will define a Borel probability measure $\nu$ on the Borel $\sigma$-algebra of $\Sigma_L^+$. Recall that this $\sigma$-algebra is generated by (one-sided) cylinders $[i_0\ldots i_{m-1}]$. 
To start, consider the cylinder $[a^{m_1}]=[a_0\ldots a_{m_1-1}]\subset\Sigma_L^+$ of length $m_1$ and define 
\[
	\nu([a^{m_1}])\eqdef 1
	\left(= \frac{\nu_1([a^{m_1}])}{\nu_1([a^{m_1}])}\right)
\]	
(note that since $\nu_1$ is an equilibrium measure and hence has the Gibbs property and $[a^{m_1}]$ intersects $\Sigma^1$, $\nu_1$ gives positive measure to the cylinder $[a^{m_1}]$).
Given $\ell\ge1$, assume now that the measure $\nu$ was already defined on cylinders of length $m_\ell$. We sub-distribute the measure $\nu$ on the sub-cylinders of length $m_{\ell+1}$ which intersect $\Sigma^{\ell+1}$ as follows: Given any finite sequence $i^{m_\ell}=(i_0\ldots i_{m_\ell-1})$ of length $m_{\ell}$ such that $[i^{m_\ell}]$ has positive measure $\nu$, let
\[
\nu([i^{m_\ell}j^{m_{\ell+1}-m_\ell}])
\eqdef N_{\ell+1}(i^{m_\ell})
        \, \nu([i^{m_\ell}])
        \, \nu_{\ell+1}([j^{m_{\ell+1}-m_\ell}]),
\]
where $N_{\ell+1}(i^{m_\ell})$ is the normalizing constant given by
\[
N_{\ell+1}(i^{m_\ell}) \eqdef
\left[
\sum
\nu_{\ell+1}([j^{m_{\ell+1}-m_\ell}])\right]^{-1}
\]
with summation taken over all cylinders $j^{m_{\ell+1}-m_\ell}$ so that $[i^{m_\ell}j^{m_{\ell+1}-m_\ell}]\cap\Sigma^{\ell+1}\ne\emptyset$.
For $m$ with $m_\ell<m<m_{\ell+1}$ let
\[
	\nu([i^{m}]) 
	\eqdef
 	\sum \nu([i^{m}j^{m_{\ell+1}-m}])
\]
with the analogous summation taken (compare Figure~\ref{fig:2}).
We extend the measure $\nu$ arbitrarily to the Borel $\sigma$-algebra of $\Sigma_L^+$. We will call the measure $\nu$ a \emph{bridging measure with respect to $((\Sigma^\ell)_\ell, (\nu_\ell)_\ell, \Delta,[a^{m_1}])$} and to a choice of a sufficiently rapidly increasing sequence $(m_\ell)_\ell$. 

\begin{remark}
Note that the choice of the sequence $(m_\ell)_\ell$ will be done according to certain properties of Birkhoff averages of $\Delta$. Hence, the bridging measure will also depend on $\Delta$ although this dependence is not yet apparent.
\end{remark}

Notice that $\nu$ in general is not $\sigma$-invariant.
Since $\Sigma^\ell\subset\Sigma_Q$ for each $\ell$, the measure $\nu$ is supported on $\Sigma_Q$ only. More precisely, it is supported on $[a^{m_1}]\cap\Sigma_Q$ only.

Denote
\[
	h_\ell
	\eqdef h(\sigma,\nu_\ell),\quad
	\Delta_\ell
	\eqdef \int\Delta\,d\nu_\ell.
\]
We define the \emph{forward finite-time Birkhoff averages of $\Delta$ of level $m$} at $\underline i$ by 
\[
	L_m(\underline i) \eqdef 
	\frac 1 m \sum_{k=0}^{m-1}\Delta(\sigma^k(\underline i))
\]
Recall the definition of the forward finite transition time of level $m$ at $\underline i$, $T_m(\underline i)$, in~\eqref{defTm}. 
Given the Borel probability measure $\nu$, we define its \emph{finite-time local entropy at level $m$} at $\underline i$ by
\[ 
	H_m(\nu,\underline i) 
	\eqdef - \frac{ 1}{m} \log \nu([i_0\ldots i_{m-1}]) .
\]
We define the \emph{distortion} of $\Delta$ near $\Sigma^\ell$  at level $m$ by
\[
	\dist_{\ell,m}\Delta
	\eqdef \max\left\{m\lvert L_{ m}(\underline j) - L_{ m}(\underline i)\rvert 	
			\colon
			\underline i\in\Sigma^\ell,\underline j\in[i_0\ldots i_{m-1}]\right\}.
\]
Note that we compare the distortion for orbits which stay \emph{close to} $\Sigma^\ell$ (that is, we do not necessarily have $\underline j\in\Sigma^\ell$). By continuity of $\Delta$, for each $\ell\ge1$ there exists a sequence $(\rho^\ell_m)_m$ of positive numbers converging to zero such that 
\begin{equation}\label{distorrrrtion}
	\dist_{\ell,m}\Delta
	\le {m\rho^\ell_m}.
\end{equation} 

The following result is in~\cite{GelRam:09}. For completeness, we sketch (parts of) its proof.

\begin{proposition}\label{p.pgfaweiss}
	Assume Hypothesis 1. There exist a choice of a sequence $(m_\ell)_\ell$ of sufficiently fast growing positive integers and a set $K\subset[a^{m_1}]\subset\Sigma_L^+$ such that for the bridging  measure $\nu$ with respect to $((\Sigma^\ell)_\ell,(\nu_\ell)_\ell,\Delta,[ a^{m_1}])$ the restriction of $\nu$ to $K$ is a Borel probability measure on $\Sigma_Q$ for which there exists a full measure set $\widehat K\subset K$ so that for every $\underline i\in\widehat K$ we have
\begin{itemize}
\item[(1)] $\displaystyle\liminf_{m\to\infty}H_m(\underline i)=\liminf_{m\to\infty} h_m$, 
	$\displaystyle\limsup_{m\to\infty}H_m(\underline i)=\limsup_{m\to\infty} h_m$, 
\item[(2)] $\displaystyle\liminf_{m\to\infty}L_m(\underline i)=\liminf_{m\to\infty} \Delta_m$, 
	$\displaystyle\limsup_{m\to\infty}L_m(\underline i)=\limsup_{m\to\infty} \Delta_m$,
\item[(3)] $\displaystyle\liminf_{m\to\infty}\frac{H_m(\underline i)}{L_m(\underline i)}
	=\liminf_{m\to\infty}\frac{h_m}{\Delta_m}$.	 	
\end{itemize}
Moreover, each of these  limits is uniform in $\underline i\in\widehat K$.
\end{proposition}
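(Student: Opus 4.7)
The plan is to proceed in three stages: extract uniform ergodic statistics for each $\nu_\ell$, choose $(m_\ell)$ to grow fast enough that these statistics transfer to the bridged measure, and estimate the $\nu$-measures of cylinders of length $m$ by a telescoping argument through the block structure.

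Since each $\nu_\ell$ is the equilibrium measure of the H\"older potential $\phi_\ell$ on the SFT $\Sigma^\ell$ with $P_\sigma(\phi_\ell)=0$, it is mixing and satisfies the Gibbs property. By the Shannon--McMillan--Breiman theorem, the Birkhoff ergodic theorem applied to $\Delta$ and to $\phi_\ell$, together with Egorov's theorem, I would extract for each $\ell$ an integer $k_\ell^\ast$ and a set $G_\ell \subset \Sigma^\ell$ with $\nu_\ell(G_\ell) \ge 1 - 2^{-\ell}$ on which, for every $k \ge k_\ell^\ast$ and every $\underline j \in G_\ell$,
\[
\bigl|{-}\tfrac{1}{k}\log \nu_\ell([j_0 \ldots j_{k-1}]) - h_\ell \bigr| \le \varepsilon_\ell
\quad\text{and}\quad
\bigl| L_k(\underline j) - \Delta_\ell \bigr| \le \varepsilon_\ell,
\]
with $\varepsilon_\ell \to 0$. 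Next, choose $(m_\ell)$ inductively so that (a)~$m_{\ell+1}-m_\ell \ge k_{\ell+1}^\ast$, (b)~$m_\ell/m_{\ell+1} \le 2^{-\ell}$, (c)~$m_{\ell+1}\rho^{\ell+1}_{m_{\ell+1}-m_\ell} \le 2^{-\ell}$ via \eqref{distorrrrtion}, and (d)~$m_\ell$ exceeds the admissibility length of $\Sigma^{\ell+1}$. Define
\[
K \eqdef \bigl\{ \underline i \in [a^{m_1}] \colon \text{for all but finitely many } \ell,\ (i_{m_\ell},\ldots, i_{m_{\ell+1}-1}) \text{ prefixes some } \underline j \in G_{\ell+1} \bigr\}.
\]
A Borel--Cantelli argument, using that the conditional $\nu$-mass of the ``bad'' sub-cylinders at stage $\ell+1$ is at most $(\max N_{\ell+1})\cdot 2^{-\ell}$, yields $\nu(K)>0$, and on the tail set $\widehat K$ where the bad index has stabilized, the forthcoming convergences are uniform (by Egorov).

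For $\underline i \in \widehat K$, iterating the construction of $\nu$ yields the product formula
\[
\nu([i_0\ldots i_{m_\ell -1}]) = \prod_{k=2}^{\ell} N_k(i^{m_{k-1}}) \cdot \nu_k\bigl([i_{m_{k-1}} \ldots i_{m_k-1}]\bigr).
\]
Combined with the Shannon--McMillan bounds above applied to each block, with (b), and with uniform bounds on the $N_k$ (see the final paragraph), this gives $H_{m_\ell}(\underline i) = h_\ell + o(1)$ and $L_{m_\ell}(\underline i) = \Delta_\ell + o(1)$. For intermediate $m \in (m_\ell,m_{\ell+1})$, summing over allowed continuations gives
\[
\nu([i_0\ldots i_{m-1}]) \asymp \nu([i^{m_\ell}]) \cdot \nu_{\ell+1}([i_{m_\ell}\ldots i_{m-1}]),
\]
so
\[
H_m(\underline i) = \frac{h_\ell\, m_\ell + h_{\ell+1}(m-m_\ell)}{m} + o(1),
\]
and analogously for $L_m$. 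Both are rational functions of $m$ whose derivative in $m$ has constant sign, hence their values at intermediate $m$ lie between the endpoint values $h_\ell,h_{\ell+1}$ (respectively $\Delta_\ell,\Delta_{\ell+1}$), and the ratio $H_m/L_m$ is correspondingly monotonic between $h_\ell/\Delta_\ell$ and $h_{\ell+1}/\Delta_{\ell+1}$. Taking $\liminf_{m\to\infty}$ and $\limsup_{m\to\infty}$ therefore recovers the corresponding extremes of the sequences $(h_\ell)$, $(\Delta_\ell)$, and $(h_\ell/\Delta_\ell)$, giving (1), (2), and~(3).

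The main obstacle is the uniform control of the normalization constants $N_{\ell+1}(i^{m_\ell})$. Their reciprocals are sums of $\nu_{\ell+1}$-measures of cylinders $[j^{m_{\ell+1}-m_\ell}]$ whose concatenation with $i^{m_\ell}$ meets $\Sigma^{\ell+1}$; once $m_\ell$ exceeds the admissibility length of $\Sigma^{\ell+1}$, this compatibility is determined by the last few symbols of $i^{m_\ell}$, and the Gibbs property of $\nu_{\ell+1}$ together with mixing of the SFT $\Sigma^{\ell+1}$ bounds the sum uniformly from above and below by constants depending only on $\ell+1$. Consequently $\sum_{k\le\ell}\log N_k = O(\ell) = o(m_\ell)$, which is precisely what is needed for the telescoping in the previous paragraph to produce the claimed asymptotics.
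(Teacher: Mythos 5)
Your overall architecture --- block construction of $\nu$, per-level typicality sets with Egorov-type uniformity, a Borel--Cantelli argument on the conditional measures of the bad blocks, and interpolation of $H_m$ and $L_m$ between consecutive scales $m_\ell$ --- is essentially the paper's, with the paper's two large-deviation lemmas for $\nu$ (quoted from the reference on parabolic systems) replaced by Shannon--McMillan--Breiman and Birkhoff plus Egorov applied to each $\nu_{\ell+1}$ together with the Gibbs property; that substitution is legitimate and would make the argument more self-contained.

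The genuine gap is in item (3). Every error you control is controlled only \emph{additively}: you obtain $H_{m_\ell}(\underline i)=h_\ell+o(1)$, $L_{m_\ell}(\underline i)=\Delta_\ell+o(1)$, and the analogous $o(1)$ statements at intermediate times $m$. This suffices for (1) and (2), but it does not give the ratio statement (3) unless $h_\ell$ and $\Delta_\ell$ stay bounded away from zero. In the only regime this proposition is actually used in (the boundary exponent $\alpha=0$ in Theorem~A), one has $\Delta_\ell\to0$ and $h_\ell\to0$ while $h_\ell/\Delta_\ell$ converges to the claimed dimension bound; then knowing $H_m$ and $L_m$ up to additive $o(1)$ errors gives no control at all on $H_m/L_m$, since the errors dominate both numerator and denominator, and your final step ``taking $\liminf$ recovers $h_\ell/\Delta_\ell$'' fails. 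The paper is arranged precisely to avoid this: the sets $K_{\rm H}(\varepsilon,\ell)$ and $K_{\rm L}(\varepsilon,\ell)$ measure deviations in units of $\lvert h_{\ell+1}-h_\ell\rvert$ and $\lvert \Delta_{\ell+1}-\Delta_\ell\rvert$, the recursion yields the \emph{multiplicative} convergence $H_{m_\ell}(\nu,\underline i)/h_\ell\to1$ uniformly, and only then is the convex combination $\frac{m_\ell}{m}h_\ell+\frac{m-m_\ell}{m}h_{\ell+1}$ compared with $\min\{h_\ell/\Delta_\ell,\,h_{\ell+1}/\Delta_{\ell+1}\}$. Your proof can be repaired, but every error source must be made relative rather than absolute: choose $\varepsilon_\ell$ small compared with $h_{\ell+1}$ and $\Delta_{\ell+1}$ (or with the successive gaps), enlarge $k_\ell^\ast$ so that the distortion $\rho^\ell_k$ is small relative to $\Delta_\ell$, and choose $m_\ell$ so large that $m_{\ell-1}\log L+\sum_{k\le\ell}\log N_k$ is $o(m_\ell h_\ell)$ and $o(m_\ell\Delta_\ell)$, not merely $o(m_\ell)$. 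Two smaller points in the same spirit: the summability needed for Borel--Cantelli requires the Egorov exceptional mass at stage $\ell+1$ to be chosen \emph{after} the lower bound on the normalization sums (with the rigid choice $2^{-\ell}$ the series $\sum_\ell (\max N_{\ell+1})2^{-\ell}$ need not converge), and the claimed uniformity of the limits must be obtained by restricting to a positive-measure set on which the good-block property holds for all $\ell\ge j$ with one fixed $j$ (as the paper does with its sets $K_j$), not on the full-measure union over $j$.
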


\begin{proof}
For the cylinder $[a^{m_1}]=[a_0\ldots a_{m_1-1}]$, for a given sequence $(m_\ell)_\ell$ of natural numbers, consider the subsets
\[\begin{split}
	K_{\rm H}(\varepsilon, \ell)
	&\eqdef 
	 \Big\{ \underline i \in[a^{m_1}]\colon\\
	 &\phantom{\eqdef\Big\{\underline i}
	 	\Big\lvert H_m(\nu,\underline i) - \Big(\frac{m_\ell}{m}H_{m_\ell}(\nu,\underline i) 
	+\frac{(m-m_\ell)}{m} h_{\ell+1}\Big)\Big\rvert
	\le \varepsilon  \lvert h_{\ell+1}-h_\ell\rvert \\
	&\phantom{-------------------}
	 \text{ for }m_\ell<m\le m_{\ell+1}\Big\}\\
	 K_{\rm L}(\varepsilon, \ell)
	&\eqdef \Big\{ \underline i \in[a^{m_1}]\colon\\
	&\phantom{\eqdef\Big\{\underline i}
	 \Big\lvert  L_m(\underline i) - \Big(\frac{m_\ell}{m} L_{m_\ell}(\underline i) +
	  \frac{m-m_\ell}{m} \Delta_{\ell+1}\Big)\Big\rvert
	\le \varepsilon \, \lvert \Delta_{\ell+1}-\Delta_\ell\rvert \\
	&\phantom{-------------------}
	\text{ for  }m_\ell<m\le m_{\ell+1}\Big\}.
\end{split}\]

Observe that the finite-time local entropy at each $\underline i\in K_{\rm H}(\varepsilon,\ell)$ at level $m$ for $m_\ell<m\le m_{\ell+1}$ is roughly equal to the convex combination of its finite-level entropy $H_{m_\ell}(\nu,\underline i)$ and the value $h_{\ell+1}$. More precisely,  we have
\[
	\left\lvert
		\log\frac{\nu([i_0\ldots i_{m_{\ell+1}-1}])}
				{\nu([i_0\ldots i_{m_\ell-1}])}
		+ (m_{\ell+1}-m_\ell)h_{\ell+1}
	\right\rvert
	\le 	m_{\ell+1}\varepsilon	 \lvert h_{\ell+1}-h_\ell\rvert.
\]
This observation yields to the following large deviation result which we state without proof (compare~\cite[Proposition 7]{GelRam:09}).

\begin{lemma} \label{cor1}
	For every $\varepsilon>0$ and every $\delta\in(0,1)$ there exists a number $M_{\rm H}=M_{\rm H}(\varepsilon,\delta,\nu_{\ell+1})\ge1$ such that for any choice of $m_\ell\ge M_{\rm H}$ we have 
	\[
	\nu(K_{\rm H}(\varepsilon,\ell))> 1- \delta.
	\]
\end{lemma}

The case of finite-time Lyapunov exponents is easier. Indeed, for any $m$ with $m_\ell<m\le m_{\ell+1}$ the finite-time Birkhoff averages are convex combinations.
\begin{equation} \label{eqn:lups}
	L_m(\underline i) 
	= \frac {m-m_\ell} m L_{m-m_\ell}(\sigma^{m_\ell}(\underline i)) 
		+ \frac {m_\ell} m L_{m_\ell}(\underline i)	\notag.
\end{equation}
For every $\underline i\in K_{\rm L}(\varepsilon, \ell)$ we have
\[
	\left\lvert
		\sum_{k=m_\ell}^{m_{\ell+1}-1}\Delta(\sigma^k(\underline i))
		- (m_{\ell+1}-m_\ell)\Delta_{\ell+1}
	\right\rvert
	\le 	m_{\ell+1}\varepsilon	 \lvert \Delta_{\ell+1}-\Delta_\ell\rvert.
\]
In correspondence to Lemma~\ref{cor1}, we have the following large deviation result (see~\cite[Proposition 8]{GelRam:09}).

\begin{lemma} \label{cor2}
	For every $\varepsilon>0$ and every $\delta\in(0,1)$ there exists an integer $M_{\rm L}=M_{\rm L}(\varepsilon,\delta,\phi_{\ell+1},\nu_{\ell+1})\ge1$ such that for any choice of $m_\ell\ge M_{\rm L}$ we have 
	\[
	\nu(K_{\rm L}(\varepsilon,\ell)		)> 1- \delta.
	\]
\end{lemma}

We make now a first specification of the sequence $(m_\ell)_\ell$.

\begin{lemma}\label{lem:LD}
	For every sequence $(\varepsilon_\ell)_\ell$ decaying to $0$, there exists a choice of sequence $(m_\ell)_\ell$ such that $\nu$-almost every $\underline i\in \Sigma_Q$ is contained in $K_{\rm H}(\varepsilon_\ell,\ell)\cap K_{\rm L}(\varepsilon_\ell,\ell)$ for all except finitely many $\ell$.
\end{lemma}

\begin{proof}
	By Lemmas~\ref{cor1} and~\ref{cor2} we can choose a summable sequence of numbers $(\delta_\ell)_{\ell=1}^\infty$ and a sequence $(M_\ell)_\ell$ such that for any choice $m_\ell\ge M_\ell$ we have
	\[
	\sum_{\ell=1}^\infty
	\nu\Big(\Sigma_Q\setminus 
		\big(K_{\rm H}(\varepsilon_\ell,\ell)\cap K_{\rm L}(\varepsilon_\ell,\ell) 
		\big) \Big) <
		2\sum_{\ell=1}^\infty\delta_\ell
		<\infty.
	\] 
	The Borel-Cantelli lemma now implies that the set of points $\underline i\in \Sigma_Q$ that are contained in infinitely many $K_{\rm H}(\varepsilon_\ell,\ell)\cap K_{\rm L}(\varepsilon_\ell,\ell)$ 
	 is zero.
\end{proof}	

In the rest of this proof fix some sequence $(\varepsilon_\ell)_\ell$ decaying to $0$ and consider a sequence $(m_\ell)_\ell$ as provided by Lemma~\ref{lem:LD}. Given $j\ge 1$ let
\begin{equation}\label{defKj}
K_j\eqdef \bigcap_{\ell\ge j}\Big( K_{\rm H}(\varepsilon_\ell,\ell)\cap 
		K_{\rm L}(\varepsilon_\ell,\ell) 
		\Big)
		\quad\text{ and  let }\quad
		\widehat K\eqdef \bigcup_{j\ge1}K_j.
\end{equation}
Note that $(K_j)_{j\ge1}$ is an increasing family of sets of points for which the above estimates are satisfied for all $\ell\ge j$. 

By Lemma~\ref{lem:LD} we have $\nu(\widehat K)=1$. 
Hence, there is $j\ge1$  such that $\nu(K_j)>0$.
Given $\underline i\in K_j$, by definition of $K_{\rm H}(\varepsilon_\ell,\ell)$, for all $\ell\ge j$ we have
\[
\Big\lvert H_{m_{\ell+1}}(\nu,\underline i) - \frac{m_\ell}{m_{\ell+1}} H_{m_\ell}(\nu,\underline i) 
	-h_{\ell+1} +\frac{m_\ell}{m_{\ell+1}} h_{\ell+1}\Big\rvert
	\le \varepsilon_\ell \, \lvert h_{\ell+1}-h_\ell\rvert.
\]
This implies 
\[\begin{split}
&\Big\lvert  H_{m_{\ell+1}}(\nu,\underline i) - h_{\ell+1}\Big\rvert \\
&\le \Big\lvert H_{m_{\ell+1}}(\nu,\underline i)
	- \frac{m_\ell}{m_{\ell+1}} H_{m_\ell}(\nu,\underline i)
	- h_{\ell+1}
	+\frac{m_\ell}{m_{\ell+1}} h_{\ell+1}\Big\rvert\,+\\
&\phantom{=}+\,	\Big\lvert
	\frac{m_\ell}{m_{\ell+1}}h_\ell
	-\frac{m_\ell}{m_{\ell+1}} h_{\ell+1}
	- \frac{m_\ell}{m_{\ell+1}}h_\ell
	+\frac{m_\ell}{m_{\ell+1}} H_{m_\ell}(\nu,\underline i)
	\Big\rvert \\
&\le  \Big(\frac{m_\ell}{m_{\ell+1}}+\varepsilon_\ell\Big) \lvert h_{\ell+1}-h_\ell\rvert
	+ \frac{m_\ell}{m_{\ell+1}}\Big\lvert H_{m_\ell}(\nu,\underline i) - h_\ell\Big\rvert.
\end{split}\]
Hence, given $\underline i\in K_j$ for every $\ell\ge j$ we have 
\[\begin{split}
&\left\lvert  \frac{H_{m_{\ell+1}}(\nu,\underline i)}{h_{\ell+1}} - 1 \right\rvert \\
&\le \left(\frac{m_\ell}{m_{\ell+1}}+\varepsilon_\ell\right) 
	\left\lvert 1 - \frac{h_\ell}{h_{\ell+1}}\right\rvert
	+ \frac{m_\ell}{m_{\ell+1}}
	\left\lvert \frac{H_{m_\ell}(\nu,\underline i)}{h_\ell} - 1\right\rvert
	\frac{h_\ell}{h_{\ell+1}}.
\end{split}\]
Clearly $H_{m_1}(\nu,\underline i) \in [H^\s, H^\u]$ for numbers $0<H^\s$, $H^\u<\infty$ independent of $\underline i$ since there are only finitely many cylinders. 
Hence, making a particular choice of the sequences  $\varepsilon_\ell\to 0$ and then $m_\ell\to\infty$, the above estimates imply that the convergence $H_{m_\ell}(\nu,\underline i)/h_\ell \rightarrow 1$ is uniform in $\underline i\in K_j$ as $\ell\to \infty$.
Notice that for every $m_\ell\le m \le m_{\ell+1}$
\[
 h(m)\eqdef\frac{m_\ell}{m} h_\ell + \frac{(m-m_\ell)}{m} h_{\ell+1}
\]
satisfies $h_\ell \le h(m)\le h_{\ell+1}$. Now, recalling that $\underline i\in K_{\rm H}(\varepsilon_\ell,\ell)$ for every $\ell\ge j$, we obtain
\[\begin{split}
	\lvert H_m(\nu,\underline i) - h(m)\rvert
	&= \Big\lvert H_m(\nu,\underline i) 
		- \frac{m_\ell}{m}h_\ell - \frac{(m-m_\ell)}{m}h_{\ell+1}\Big\rvert\\
	&\le \varepsilon_\ell\lvert h_{\ell+1}-h_\ell\rvert 
		+ \frac{m_\ell}{m}	\lvert H_{m_\ell}(\nu,\underline i)-h_\ell\rvert,	
\end{split}\]
which implies 
\[
\liminf_{m\to\infty} H_m(\nu,\underline i) = \liminf_{\ell\to\infty} h_\ell
\quad\text{ and }\quad
\limsup_{m\to\infty} H_m(\nu,\underline i) = \limsup_{\ell\to\infty} h_\ell
\]
uniformly in $\underline i\in K_j$, proving item (1) of the proposition.

Note that $L_{m_1}(\underline i) \in [L^\s, L^\u]$ for some numbers $0<L^\s$,  $L^\u<\infty$ independent of $\underline i$ since $\Delta$ as a continuous function is  bounded on $\Sigma_Q$.
By analogous arguments, from the definition of $K_{\rm L}(\varepsilon_\ell,\ell)$ together with
\[
 \Delta(m)\eqdef\frac{m_\ell}{m} \Delta_\ell + \frac{m-m_\ell}{m} \Delta_{\ell+1}
\]
satisfying $\Delta_\ell\le \Delta(m) \le \Delta_{\ell+1}$, we can conclude that 
\[
	\liminf_{m\to\infty} L_{\pm m}(\underline i) = \liminf_{\ell\to\infty} \Delta_\ell
	\quad\text{ and }\quad 
	\limsup_{m\to\infty} L_{\pm m}(\underline i)  = \limsup_{\ell\to\infty} \Delta_\ell
\]	
uniformly in $\underline i\in K_j$, proving item (2) of the proposition.

As for all $m_\ell<m\le m_{\ell+1}$ we have
\[
\frac{h(m)}{\Delta(m)} \ge \min\Big\{\frac{h_\ell}{\Delta_\ell}, \frac{h_{\ell+1}}{\Delta_{\ell+1}}\Big\},
\]
we finally derive
\[
	\liminf_{m\to\infty} \frac {H_m(\nu,\underline i)} {L_m(\underline i)} 
	= \liminf_{\ell\to\infty} \frac{h_\ell}{\Delta_\ell}
\]	
uniformly in $\underline i\in K_j$.

Finally, consider the restriction of $\nu$ to $K_j$ which is a  Borel probability measure having all properties claimed in the proposition on the set $\widehat K\eqdef K_j$.
\end{proof}

\subsection{The modeled counterpart}
We will suppose that the above symbolic dynamics is the model of some system on a compact metric space. Moreover, we will further specify the choice of the sequence $(m_\ell)_\ell$ (note that this will not alter the already obtained properties of the hence constructed bridging measure, see Remark~\ref{rem:biggersequence}).
\smallskip

\noindent
\textbf{Hypothesis 2 -- modeled counterpart:} 
Consider $(M,\rho)$ a one-dimensional smooth manifold and $\Pi\colon[a^{m_1}]\to M$  a continuous map. Assume that for every $\ell\ge1$ 
\[
	\{\Pi([i_0\ldots i_{m_\ell-1}])\colon\underline i \in\Sigma^\ell\}
\]	 
is a family of pairwise disjoint sets.
 Let $\underline\chi,\overline\chi\colon M\to\bR_{>0}$ be two measurable functions such that 
 \[
 	\underline\chi\circ\Pi=\liminf_{m\to\infty}L_m
	\quad\text{ and }\quad
 	\overline\chi\circ\Pi=\limsup_{m\to\infty}L_m.
 \]
Suppose that there is a sequence of positive numbers $(K_\ell)_\ell$ such that for every $\ell\ge1$ for every $\underline i\in\Sigma^\ell\cap[a^{m_1}]$  and every $m=kN_\ell\ge1$, where $N_\ell$ is the admissible length of the SFT $\Sigma^\ell$ and $k\ge1$, we have
\begin{equation}\label{H2:diameter}
	 \diam_M\big(\Pi([i_0\ldots i_{m-1}])\big)
	 	> K_\ell^{-1}
			e^{-m\rho^\ell_m}
			e^{-m L_{m}(\underline i)},
\end{equation}
where $\diam_M$ denotes the diameter relative to $\rho$.
Without loss of generality, we can also assume that 
\begin{equation}\label{H2:const}
	\lim_{\ell\to\infty}\frac{1}{m_\ell}\log K_\ell=0
	\quad\text{ and }\quad
	\lim_{\ell\to\infty}\rho^\ell_{m_\ell}=0.
\end{equation}

\begin{remark}[Choice of sequence $(m_\ell)_\ell$]\label{rem:biggersequence}
Observe that it is a consequence of its proof that the result of Proposition \ref{p.pgfaweiss} remains true if we replace the sequence $(m_\ell)_\ell$ by a sequence $(\widetilde m_\ell)_\ell$ satisfying $\widetilde m_\ell\ge m_\ell$ for every $\ell$. 
Without loss of generality, we can, for example, assume that for each $\ell$, $m_\ell$ is a multiple of the $\Sigma^\ell$-admissible length of this SFT.
Observe that Hypothesis 2 adds additional assumptions about the choice of the sequence $(m_\ell)_\ell$ which hence do not alter the already obtained properties of the corresponding bridging measure.
\end{remark}

Assuming Hypothesis 2, consider $\nu$ the bridging measure and define  
\[
	\lambda
	\eqdef \Pi_\ast\nu
\]
and call it the \emph{bridging measure  with respect to $((\Sigma^\ell)_\ell,(\nu_\ell)_\ell,\Delta,[a^{m_1}])$, the  choice of $(m_\ell)_\ell$, and the modeled counterpart $\Pi\colon[a^{m_1}]\to M$}.

\begin{remark}\label{rem:promeas}
	By construction of the bridging measure $\nu$, for every $\ell\ge1$ and every $\underline i\in\Sigma^\ell$ we have
\[
	\lambda(\Pi ([i_0\ldots i_{m_\ell-1}]) )
	= \nu( [i_0\ldots i_{m_\ell-1}]) 
	.
\]	
\end{remark}

\begin{proposition}\label{p.pgfaweiss-b}
	Assume Hypotheses 1 and 2. There exists a Borel probability measure $\lambda$ on $M$ and a $\lambda$-full measure set $\widetilde K\subset M$ so that for every $x\in\widetilde K$ we have
\[
	\underline d_\lambda(x)
	\eqdef \liminf_{\varepsilon\to0}\frac{\log \lambda(B(x,\varepsilon))}{\log\varepsilon}
	\ge  \liminf_{\ell\to\infty}\frac{h_\ell}{\Delta_\ell}
\]
and
\[
	\underline\chi(x)
	=\liminf_{\ell\to\infty}\Delta_\ell
	\quad\text{ and }\quad
	\overline\chi(x)
	=\limsup_{\ell\to\infty}\Delta_\ell.
\]
\end{proposition}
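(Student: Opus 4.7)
The plan is to set $\lambda \eqdef \Pi_\ast\nu$ and $\widetilde K \eqdef \Pi(\widehat K)$, where $\nu$ and $\widehat K$ are the bridging measure and full-measure set furnished by Proposition~\ref{p.pgfaweiss}. Then $\lambda$ is automatically a Borel probability measure with $\lambda(\widetilde K) = 1$. The two Lyapunov exponent assertions then fall out immediately: for $x = \Pi(\underline i) \in \widetilde K$ with $\underline i \in \widehat K$, Hypothesis~2 identifies $\underline\chi(x) = \liminf_m L_m(\underline i)$ and $\overline\chi(x) = \limsup_m L_m(\underline i)$, which by Proposition~\ref{p.pgfaweiss}(2) equal $\liminf_\ell \Delta_\ell$ and $\limsup_\ell \Delta_\ell$ respectively.

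For the lower bound on $\underline d_\lambda(x)$, I would fix $\delta > 0$, set $d \eqdef \liminf_\ell h_\ell/\Delta_\ell$, and combine two ingredients at each level $\ell$. By Proposition~\ref{p.pgfaweiss}(3), for all sufficiently large $\ell$ we have $H_{m_\ell}(\nu,\underline i) \geq (d-\delta)\, L_{m_\ell}(\underline i)$, equivalently $\nu([i_0\ldots i_{m_\ell-1}]) \leq e^{-(d-\delta)\, m_\ell L_{m_\ell}(\underline i)}$. On the geometric side, by the bridging construction the $m_\ell$-cylinder of $\underline i$ intersects $\Sigma^\ell$, so picking any $\underline j \in \Sigma^\ell \cap [i_0\ldots i_{m_\ell-1}]$ and combining the diameter estimate~\eqref{H2:diameter} with the distortion bound~\eqref{distorrrrtion} yields
\[
\diam_M \Pi([i_0\ldots i_{m_\ell-1}]) \;\geq\; K_\ell^{-1}\, e^{-2 m_\ell \rho^\ell_{m_\ell}}\, e^{-m_\ell L_{m_\ell}(\underline i)}.
\]
Remark~\ref{rem:promeas} identifies the $\lambda$-mass of the cylinder image with $\nu([i_0\ldots i_{m_\ell-1}])$, while~\eqref{H2:const} ensures the factors $K_\ell$ and $e^{2 m_\ell \rho^\ell_{m_\ell}}$ are sub-exponential in $m_\ell$. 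Thus
\[
\lambda\bigl(\Pi([i_0\ldots i_{m_\ell-1}])\bigr) \;\leq\; \bigl(\diam_M \Pi([i_0\ldots i_{m_\ell-1}])\bigr)^{d-\delta}
\]
up to a sub-exponential correction.

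Converting this cylinder-wise bound into a bound on $\lambda(B(x,\varepsilon))$ is the step I expect to be the main obstacle. Given a small $\varepsilon > 0$, I would choose $\ell = \ell(\varepsilon)$ so that the diameter lower bound just derived is of order $\varepsilon$. Using the one-dimensionality of $M$ together with the pairwise disjointness of $\{\Pi([j^{m_\ell}]) : \underline j \in \Sigma^\ell\}$ stipulated in Hypothesis~2, only a uniformly bounded number of members of this disjoint family can meet $B(x,\varepsilon)$. Summing the cylinder measures across these few pieces then yields $\lambda(B(x,\varepsilon)) \leq C_\ell\, \varepsilon^{d-\delta}$ with $C_\ell$ sub-exponential in $m_\ell$, hence $\underline d_\lambda(x) \geq d-\delta$, and letting $\delta \to 0$ concludes. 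The technical subtlety in this last step is that the diameter lower bound from~\eqref{H2:diameter} depends on the local finite-time Birkhoff average of the specific cylinder, so care is needed in using the distortion bound~\eqref{distorrrrtion} to control both the entropy-exponent ratio at $\underline i$ and the ball-to-cylinder comparison simultaneously.
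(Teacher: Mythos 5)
Your strategy is the one the paper follows: take $\lambda\eqdef\Pi_\ast\nu$ and $\widetilde K\eqdef\Pi(\widehat K)$, read off the two exponent identities from Hypothesis~2 together with Proposition~\ref{p.pgfaweiss}(2), and for the dimension bound compare the $\nu$-mass of the $m_\ell$-cylinder through $\underline i$ (controlled by $H_{m_\ell}\approx h_\ell$, i.e.\ by items (1)/(3) of Proposition~\ref{p.pgfaweiss}) with the diameter of its projection, obtained from~\eqref{H2:diameter} applied to a point of $\Sigma^\ell$ in the same cylinder and transferred to $\underline i$ by the distortion bound~\eqref{distorrrrtion}, with Remark~\ref{rem:promeas} and~\eqref{H2:const} entering exactly as you use them. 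Up to the ball-versus-cylinder step your proposal reproduces the paper's proof.

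The step you yourself flag as the main obstacle is where your argument, as written, fails. Disjointness of $\{\Pi([j_0\ldots j_{m_\ell-1}])\colon\underline j\in\Sigma^\ell\}$ in a one-dimensional manifold does \emph{not} give a uniformly bounded number of members meeting $B(x,\varepsilon)$: the lower bound~\eqref{H2:diameter} for a neighbouring cylinder involves \emph{its own} Birkhoff average $L_{m_\ell}(\underline j)$, which may greatly exceed $L_{m_\ell}(\underline i)$, so neighbouring members can be exponentially shorter than $\varepsilon$ and the number of members meeting the ball is only bounded by $\varepsilon$ divided by the smallest diameter in the family, i.e.\ by a quantity exponential in $m_\ell$. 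What rescues this line of argument is that (after the restriction made at the end of the proof of Proposition~\ref{p.pgfaweiss}) only cylinders meeting $K_{\rm H}(\varepsilon_\ell,\ell)\cap K_{\rm L}(\varepsilon_\ell,\ell)$ carry $\nu$-mass; for those both the mass bound $e^{-(1-\varepsilon)m_\ell h_\ell}$ and a diameter lower bound with $L_{m_\ell}\approx\Delta_\ell$ hold uniformly, so the multiplicity is only of order $e^{2\varepsilon m_\ell\Delta_\ell+o(m_\ell)}$ --- not bounded, but harmless in the limit. The paper avoids the issue by estimating only at the radii $r_\ell\eqdef\diam_M\Pi([i_0\ldots i_{m_\ell-1}])$ of the cylinder containing $x$ and bounding $\lambda(B(x,r_\ell))$ by the mass of the at most two projected cylinders containing $x$. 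Relatedly, your plan to choose $\ell(\varepsilon)$ so that the level-$m_\ell$ diameter bound is ``of order $\varepsilon$'' cannot be carried out for every small $\varepsilon$: since $m_{\ell+1}/m_\ell\to\infty$, consecutive levels leave huge gaps of scales, and intermediate $\varepsilon$ would have to be handled with cylinders of length $m$, $m_\ell<m\le m_{\ell+1}$, whose mass and averages are governed by the convex combinations $h(m)$, $\Delta(m)$ in the definitions of $K_{\rm H}$, $K_{\rm L}$ and by item (3) of Proposition~\ref{p.pgfaweiss} --- this is precisely what that item is for; the paper's own write-up likewise works only along the sequence $r_\ell$, so do not present the single-scale-per-level reduction as if it covered all $\varepsilon$.
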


\begin{proof}
The main idea is to construct a ``multi-scale dynamically defined Cantor set'' which at each level of construction is covered by projections by $\Pi$ of cylinder sets which intersect the sets  $K_{\rm H}$ and $K_{\rm L}$ and which intersect the projection of $\Sigma^\ell$, where we will make explicit reference to the properties of those sets constructed in the proof of Proposition~\ref{p.pgfaweiss}. 
 On those sets (and hence on some sufficiently small cylinder neighborhoods) we have good control of finite-time entropy and finite-time Lyapunov exponents. This will enable us to determine the local dimension at each point of the generated Cantor set.
 
Fix some sequence $(\varepsilon_\ell)_\ell$ decaying to $0$ and choose a sufficiently rapidly growing sequence $(m_\ell)_\ell$ according to Proposition \ref{p.pgfaweiss} and to Remark~\ref{rem:biggersequence} (we will further specify this sequence below). Given $j\ge1$ let 
\begin{equation}\label{eq:herenew}
	\cK_j
	\eqdef\bigcap_{\ell\ge j} \cC_\ell
	\quad\text{ and let }\quad
	\widehat \cK
	\eqdef \bigcup_{j\ge1} \cK_j,
\end{equation}
 where 
\[ 
	\cC_\ell
	\eqdef\bigcup \Pi( [i_0\ldots i_{m_\ell-1}]),
\]
where the union is taken over all cylinders $[i_0\ldots i_{m_\ell-1}]$ for some sequence $\underline i\in\Sigma^\ell$ which have nonempty intersection with the set $K_{\rm H}(\varepsilon_\ell,\ell)\cap K_{\rm L}(\varepsilon_\ell,\ell)$.	
Note that, by Hypothesis 2 and by Remark \ref{rem:promeas}, for the measure  $\lambda=\Pi_\ast\nu$ we have
\[
	\lambda(\Pi ([i_0\ldots i_{m_\ell-1}]) )
	= \nu( [i_0\ldots i_{m_\ell-1}]) 	.
\]	

By  Proposition~\ref{p.pgfaweiss}, there is a $\nu$-full measure set $\widehat K\subset\Sigma_L^+$
\[
	\lim_{\ell\to\infty}\lvert{H_{m_\ell}(\nu,\cdot)}-{h_\ell}\rvert=0
	\quad\text{ and }\quad
	\lim_{\ell\to\infty}\lvert{L_{m_\ell}(\cdot)}-{\Delta_\ell}\rvert=0.
\]
uniformly in $\widehat K$. Hence, given $\varepsilon>0$ there exists $\ell_0$ such that for any $\underline i\in \widehat K$ for any $\ell\ge \ell_0$ we have
\[
	\frac{-\log \nu([i_0\ldots i_{m_\ell-1}])}{m_\ell h_\ell}
	\in(1-\varepsilon,1+\varepsilon)
	\quad\text{ and }\quad
	\frac{L_{m_\ell}(\underline i)}{\Delta_\ell}
	\in(1-\varepsilon,1+\varepsilon).
\]

Let us now estimate the pointwise dimension of the measure $\lambda$.
Let $x\in\widehat \cK$. 
Note that we can write 
\[
	\{x\}
	\subset \bigcap_{\ell\ge1}\Pi ([i_0\ldots i_{m_\ell-1}]) 
\]
for some appropriate symbolic sequence $\underline i\in\Sigma_Q$. Given $x$, by~\eqref{eq:herenew} there is $j\ge1$ such that $x\in\cK_j$ and hence $x\in\cC_\ell$ for every $\ell\ge j$.
Together with Hypothesis 2, 
\[\begin{split}
	\log\diam_M\Pi([i_0\ldots i_{m_\ell-1}])
	&\ge -\log K_\ell - m_\ell\rho_{m_\ell}^\ell - m_\ell L_{m_\ell}(\underline i)\\
	&\ge -\log K_\ell - m_\ell\rho_{m_\ell}^\ell - m_\ell(1-\varepsilon)\Delta_\ell.
\end{split}\]
 On the other hand, 
\[
	\log\lambda(\Pi([i_0\ldots i_{m_\ell-1}]))
	= \log\nu([i_0\ldots i_{m_\ell-1}])
	\le -m_\ell (1-\varepsilon) h_\ell.
\]
Letting now $r_\ell\eqdef \diam_M\Pi([i_0\ldots i_{m_\ell-1}])$ and using the fact that $M$ is one-dimensional smooth manifold and hence $x$ can be in at most two cylinder sets $\Pi([i_0\ldots i_{m_\ell-1}])$ simultaneously, we obtain
\[
	\lambda(B(x,r_\ell))
	\le 2e^{-m_\ell (1-\varepsilon) h_\ell}
\]
which implies
\[
	\frac{\log\lambda(B(x,r_\ell))}
		{\log r_\ell}
	\ge  \frac{-\frac{\log 2}{m_\ell}+ (1-\varepsilon)h_\ell}
			{\frac{\log K_\ell}{m_\ell} + \rho_{m_\ell}^\ell + (1-\varepsilon)\Delta_\ell}.
\]
Taking the limit $\ell\to\infty$ and using~\eqref{H2:const} and recalling that $\varepsilon$ was arbitrary, we obtain
\[
	\underline d_\lambda(x)
	\ge \liminf_{\ell\to\infty}\frac{h_\ell}{\Delta_\ell}.
\]

Finally, let $\widetilde K\eqdef\Pi(\widehat K)$.
The statement about the lower and upper limits $\underline\chi$ and $\overline\chi$ are immediate by Proposition~\ref{p.pgfaweiss}.
This proves the proposition.
\end{proof}

\section{Proof of Theorem~\ref{jen}}\label{sec:locate}

Any increasing family of basic sub-sets of $T^1M$ provides us immediately with lower bounds for the Hausdorff dimension of $\cL(\alpha)$ for exponents $\alpha$ in the \emph{interior} of the spectrum of possible Lyapunov exponents. Our main concern, however, is to describe the level set $\cL(0)$ for the exponent $\alpha=0$ at the \emph{boundary} of the spectrum of possible exponents. The analogous applies to the other boundary value $\overline\chi\eqdef\max\{\alpha>0\colon\chi(\mu)=\alpha\text{ for some }\mu\in\cM_{\rm e}(G)\}$.

Given $v\in T^1M$, denote by
\begin{equation}\label{eq:defexponents}
	\underline\chi^\pm(v)
	\eqdef \liminf_{t\to\pm\infty}\frac1t\log\,\lVert dg^t|_{F_v}\rVert
\end{equation}
the \emph{lower forward/backward Lyapunov exponent} at $v$ (we assume that the limit exists) and define the \emph{upper forward/backward Lyapunov exponent} at $v$ analogously replacing $\liminf$ by $\limsup$ and denote them by $\overline\chi^\pm(v)$. We study the Hausdorff dimension of level sets $\{v\colon \underline\chi^\pm(v)=\alpha_1,\overline\chi^\pm(v)=\alpha_2\}$.

\begin{proposition}\label{prop:boundary}
	Let $(\Lambda_\ell)_{\ell\ge1}$ be an increasing family of basic sets each with topological dimension one and so that $\bigcup_{\ell\ge1}\Lambda_\ell$ is dense in $T^1M$.
For each $\ell\ge1$ let $\lambda_\ell\in\cM(G|_{\Lambda_\ell})$ be an equilibrium measure for some H\"older continuous potential. 
Let $v\in \bigcup_{\ell\ge1}\Lambda_\ell$ and $\varepsilon\in(0,1)$.
Then
	\begin{multline*}
		\dim_{\rm H}\left(\left\{ w\in \sW^\u_\varepsilon(v)\colon 
		\underline\chi^+(w)=\liminf_{\ell\to\infty}\chi(\lambda_\ell),\,
		\overline\chi^+(w)=\limsup_{\ell\to\infty}\chi(\lambda_\ell)\right\}\right)\\
		\ge \liminf_{\ell\to\infty}\frac{h(G,\lambda_\ell)}{\chi(\lambda_\ell)}.
	\end{multline*}
The analogous statement holds true for the local stable manifold $\sW^\s_v$ and the backward Lyapunov exponents.	
\end{proposition}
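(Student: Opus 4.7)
The plan is to build, on the local unstable manifold $\sW^\u_\varepsilon(v)$, a Borel probability measure whose typical points realize the prescribed forward Lyapunov behavior and whose lower pointwise dimension is bounded below by $\liminf_\ell h(G,\lambda_\ell)/\chi(\lambda_\ell)$; the mass distribution principle will then give the claimed Hausdorff dimension bound.

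First I would anchor the symbolic coding at $v$. Pick $\ell_0$ with $v\in\Lambda_{\ell_0}$ and apply Proposition~\ref{pro:comcrosec} with $\hat v=v$ and $\alpha$ sufficiently small, producing a common adapted family of disjoint local cross sections for the expansive flow on $T^1M$ and, simultaneously, for each one-dimensional basic set $\Lambda_\ell$, $\ell\ge\ell_0$, with $v$ in the interior of a section $S_1\subset D(v)$. This yields a symbolic coding $\Sigma_Q\subset\Sigma_L$ of the flow and an increasing family of SFTs $(\Sigma^\ell)_{\ell\ge\ell_0}$ coding the basic sets (Section~\ref{sec:symcodflobas}), together with the height function $\tau$, factor map $\Pi$, and semi-conjugacy $p$ (Section~\ref{subsec.symflo}). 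By H\"older regularity of $\varphi^{(\u)}$ on each basic set together with the Bowen--Walters correspondence (Section~\ref{subsec.symflo}), $\lambda_\ell$ pulls back to the equilibrium state $\nu_\ell$ of a H\"older potential on $\Sigma^\ell$. Passing to the one-sided quotient adapted to the unstable direction, I select a one-sided cylinder $[a^{m_1}]$ around the code of $v$ whose $\Pi$-image lies inside $\sW^\u_\varepsilon(v)\cap S_1$; this sets up Hypothesis~1 of Section~\ref{sec:bridge}.

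I would then apply the bridging construction with $\Delta=-\Delta_{\varphi^{(\u)}}$ defined as in~\eqref{laut}; H\"older regularity of $\varphi^{(\u)}$ on each basic set makes $\Delta|_{\Sigma^\ell}$ H\"older, so the distortion sequences $\rho^\ell_m$ in~\eqref{distorrrrtion} tend to zero. For the modeled counterpart of Hypothesis~2, I take $M=\sW^\u_\varepsilon(v)$ with the map from cylinders to arcs obtained by holonomy along the centre-stable foliation inside $S_1$ (Lemma~\ref{lem:locpro}). The arc on $\sW^\u_\varepsilon(v)$ associated to $[i_0\ldots i_{m-1}]$ is, up to bounded distortion governed by a constant $K_\ell$, the pull-back under $g^{T_m(\underline i)}|_{F^\u}$ of a uniformly sized unstable segment, hence has length of order $\lVert dg^{T_m(\underline i)}|_{F^\u_{\Pi(\underline i)}}\rVert^{-1}$. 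Since
\[
        \log\,\lVert dg^{T_m(\underline i)}|_{F^\u_{\Pi(\underline i)}}\rVert
        = -\int_0^{T_m(\underline i)}\varphi^{(\u)}(g^s\Pi(\underline i))\,ds
        = \sum_{k=0}^{m-1}\Delta(\sigma^k\underline i)
        = m L_m(\underline i),
\]
this yields~\eqref{H2:diameter}, with the constants $K_\ell$ absorbed by enlarging $(m_\ell)_\ell$ (Remark~\ref{rem:biggersequence}).

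Proposition~\ref{p.pgfaweiss-b} then produces a Borel probability $\lambda$ on $\sW^\u_\varepsilon(v)$ and a full-measure subset $\widetilde K$ on which
\[
        \underline d_\lambda(x) \,\ge\, \liminf_{\ell\to\infty}\frac{h(\sigma,\nu_\ell)}{\int\Delta\,d\nu_\ell}
        \,=\, \liminf_{\ell\to\infty}\frac{h(G,\lambda_\ell)}{\chi(\lambda_\ell)},
\]
where the identity uses Abramov's theorem together with $\int\Delta\,d\nu_\ell=\chi(\lambda_\ell)\int\tau\,d\nu_\ell$, so the factor $\int\tau\,d\nu_\ell$ cancels. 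The same proposition gives $\liminf_m L_m(\underline i)=\liminf_\ell\int\Delta\,d\nu_\ell$ and the analogous $\limsup$; converting from symbolic to flow time via $T_m/m\to\int\tau\,d\nu_\ell$ on the block $m_\ell<m\le m_{\ell+1}$ and using Proposition~\ref{pro:whams}, I conclude $\underline\chi^+(x)=\liminf_\ell\chi(\lambda_\ell)$ and $\overline\chi^+(x)=\limsup_\ell\chi(\lambda_\ell)$ for $x\in\widetilde K$. The mass distribution principle applied to $\lambda|_{\widetilde K}$ then delivers the claimed dimension bound, and the stable-leaf statement follows by the symmetry~\eqref{eq:forbac}. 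The hard part will be verifying Hypothesis~2 uniformly in $\ell$, namely that the geometric length of $\Pi([i_0\ldots i_{m-1}])$ is bounded below by $K_\ell^{-1}e^{-m\rho_m^\ell}e^{-m L_m(\underline i)}$ with $\log K_\ell=o(m_\ell)$; this will require the tempered distortion property~\eqref{distorrrrtion}, H\"older continuity of $F^\u$ on each basic set, and the fact that Proposition~\ref{pro:comcrosec} furnishes a single adapted cross-section family working for all $\Lambda_\ell$ at once. A subsidiary subtlety is the symbolic-to-flow renormalization, in which the common factor $\int\tau\,d\nu_\ell$ must be carried through both the exponent and dimension computations so that the bridging outputs match the statements involving $\chi(\lambda_\ell)$.
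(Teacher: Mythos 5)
Your proposal is correct and follows essentially the same route as the paper: coding via Proposition~\ref{pro:comcrosec}, pulling the equilibrium states back to the one-sided SFTs, running the bridging construction of Section~\ref{sec:bridge} with $\Delta=\Delta_{-\varphi^{(\u)}}$, verifying~\eqref{H2:diameter} by pushing cylinder arcs forward by $g^{T_m}$ onto uniformly sized unstable segments with tempered distortion, and concluding with Proposition~\ref{p.pgfaweiss-b} and the mass distribution principle. Your explicit Abramov-type cancellation of $\int\tau\,d\nu_\ell$ and the symbolic-to-flow-time renormalization are only spelled out more explicitly than in the paper's own argument, not a different method.
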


Using Proposition \ref{prop:boundary}, we are now prepared to give the proof of Theorem~\ref{jen}. Our approach is to ``fill'' $T^1M$ by a family of hyperbolic subsets, invoking results from~\cite{BurGel:14}. 

By \cite{BurGel:14} there exists an increasing family  of (nontrivial, that is, not only one single periodic orbit) basic sets $(\Lambda_\ell)_\ell$ whose union is dense in $T^1M$.

\begin{proof}[Proof of Theorem~\ref{jen}]
We will only prove the assertion for the local unstable manifolds. The assertion for the local stable manifolds is analogous and follows from the naturally given symmetry between forward/backward trajectories and hence unstable/stable manifolds, see \eqref{eq:forbac}.

Recall the definition of the Lyapunov exponent $\chi$ in~\eqref{deflyame}. Given a compact invariant set $\Lambda\subset T^1M$, denote 
\[
	\underline\chi(\Lambda)\eqdef \inf\{\alpha\ge 0\colon \chi(\mu)=\alpha
	\,\,\text{ for some }\mu\in\cM_{\rm e}(G|_\Lambda)
	\}
	\]
and define $\overline\chi(\Lambda)$ analogously replacing $\inf$ by $\sup$. 
Recall that, by our hypothesis, we have $\cH\ne\emptyset$ which implies $\underline\chi(T^1M)=0$.
By~\cite[Theorem 1.4]{BurGel:14} there exists an increasing family of basic sets $(\Lambda_\ell)_{\ell\ge1}$ which satisfy  
	\[
	\lim_{\ell\to\infty}\underline\chi(\Lambda_\ell)=\underline\chi(T^1M)=0
	\quad\text{ and }\quad
	\lim_{\ell\to\infty}\overline\chi(\Lambda_\ell)=\overline\chi(T^1M).
	\]
Moreover, this family can be chosen such that each $\Lambda_\ell$ is one-dimensional and that $\bigcup_{\ell\ge1}\Lambda_\ell$ is dense in $T^1M$.	

The following concept is fundamental in almost any type of multifractal analysis (see, for example, \cite{PesWei:97}), and instrumental in the present considerations.
Given $\alpha\in\bR$, let us  introduce the \emph{Legendre-Fenchel transform of the pressure function} $q\mapsto P_{G|\Lambda}(q\varphi^{(\u)})$ that is defined by
\[
	\cE_{G|\Lambda}(\alpha)\eqdef 
	\inf_{q\in\bR}\left( P_{G|\Lambda}(q\varphi^{(\u)}) + q\alpha\right).
\]
By~\cite[Section 3.3]{BurGel:14}   for every $\alpha\in(0,\alpha_1]$ we have 
\[
	 \frac{1}{\alpha}\cE_{G|T^1M}(\alpha)=1.
\]	
Together with \cite[Proposition 12]{BurGel:14}, for any $\alpha\in(\underline\chi(T^1M),\overline\chi(T^1M))$ we obtain $\lim_{\ell\to\infty}\cE_{G|\Lambda_\ell}(\alpha)=\cE_{G|T^1M}(\alpha)$. Hence, we can choose a monotonically decreasing sequence of exponents $\alpha_k \to 0$ and for any $k$ we can choose an index $\ell=\ell(k)$ such that $\alpha_k\in (\underline\chi_\ell,\overline\chi_\ell)$ and $\cE_{G|\Lambda_\ell}(\alpha_k)/\alpha_k\ge 1-1/k$. For each $\alpha_k$ there exist a unique number $q_k$ and a unique equilibrium measure $\lambda_k$ for the potential $q_k \varphi^{(\u)}$  (with respect to ${G}|_{\Lambda_{\ell}}$) such that $\chi(\lambda_k)=\alpha_k$.  For each such subsystem we hence have
	\[
	h({G}|_{\Lambda_{\ell}},\lambda_k)
	=P_{{G}|\Lambda_{\ell}}(q_k \varphi^{(\u)})+q_k \alpha_k 
	\ge \min_{q\in\bR} \Big(P_{{G}|\Lambda_{\ell}}(q \varphi^{(\u)})+q\alpha_k \Big)
	= \cE_{G|\Lambda_{\ell}}(\alpha_k).
	\]
Hence, by Proposition~\ref{prop:boundary} for every $v\in T^1M$ and $\varepsilon>0$ we have
	\[
	\dim_{\rm H}\left(\left\{ w\in \sW^\u_\varepsilon(v)\colon 
	\chi(w)=\lim_{k\to\infty}\chi(\lambda_k)=0\right\}\right)
	\ge
	 	\liminf_{k\to\infty}\frac{\cE_{G|\Lambda_{\ell(k)}}(\alpha_k)}{\alpha_k}=1
	.\]
	This proves the theorem.
\end{proof}

In the remainder of this section we will prove Proposition~\ref{prop:boundary}.

\begin{proof}[Proof of Proposition~\ref{prop:boundary}]
Let $(\Lambda_\ell)_\ell$ be an increasing family  of one-dimensional basic sets $(\Lambda_\ell)_\ell$ whose union is dense in $T^1M$. Let $v\in \bigcup_{\ell\ge1}\Lambda_\ell$. Without loss of generality, we can assume that $v\in\Lambda_1$. Let $\varepsilon\in(0,1)$ be an expansivity constant and let $\delta=\delta(\varepsilon/3)$ be as in Lemma~\ref{lem:locpro}.
\smallskip

\noindent\textbf{Choice of local cross sections.}
By Proposition~\ref{pro:comcrosec} there is a finite family $\eS=\{S_k\}_{k=1}^L$ of pairwise disjoint sets $S_k$, each one being a topological two-manifold foliated by unstable leaves, being a local cross section of  time $\zeta$ for some $\zeta>0$ and of diameter at most $\alpha:=\delta$. Moreover, $\eS$ can be chosen such that $v\in{S_1}^\ast\subset S_1\subset D(v)$, that is, $S_1$ contains a neighborhood $U(v)$ of $v$ (relative to the induced topology on $S_1$).
Note that, by construction of $\eS$, we have $\sW^\u_{\delta/2}(v)\subset D(v)$.
\smallskip

\noindent\textbf{Intersection of $\Lambda_1$ with $\sW^\u_v$.}
Let $\varepsilon_1>0$ such that $B(v,\varepsilon_1)\cap S_1\subset U(v)$. 
By the well-known property of local product structure for any basic set, there exists $\delta_1<\delta$ such that for any $w$ in the basic set $\Lambda_1$ satisfying $\rho(w,v)\le \delta_1$ we have that the point $[w,v]$ is again in $\Lambda_1$, where $[\cdot,\cdot]$ is as defined in Lemma \ref{lem:locpro}. 
Since $\Lambda_1$ is a (nontrivial) basic set, none of its points is isolated and hence there exists $w\in\Lambda_1$ satisfying $\rho(w,v)\le\delta_1$. Hence, with these choices we can conclude that $[w,v]\in\sW^\u_{\varepsilon_1}(v)$. 

Note that, by analogous arguments, one can conclude that the segment between the points $v$ and $[w,v]$ which is contained in the local unstable manifold $\sW^\u_{\varepsilon_1}(v)$ contains further points of $\Lambda_1$ and hence of all further basic sets $\Lambda_\ell$, $\ell\ge2$.
\smallskip

\noindent\textbf{Choice of the SFT's and the one-sided cylinder $[a^{m_1}]\subset\Sigma_L$.}
Given the finite family of local cross sections $\eS$, consider the closed invariant set $\Sigma_Q\subset\Sigma_L$, and the hence associated increasing family of SFT's 
\[
	\Sigma^1\subset\Sigma^2\subset\ldots\subset\Sigma_L,
\]
where each $\Sigma^\ell$ is defined as in Sections~\ref{sec:loccro}--\ref{sec:symcodflobas} for $X=\Sigma^{\Lambda_\ell}$.  Let $\underline a,\underline b\in\Sigma^1$ such that $\Pi(\underline a)=v$ and $\Pi(\underline b)=w$. Since $w\in\sW^\u_\varepsilon(v)$ and since $\Pi|_{\Sigma^1}$ is a bijection, we have  $b_{-k}=a_{-k}$ for every $k\ge1$ and there exists a positive integer $m_1$ such that $b_k=a_k$ for every $k=0,\ldots,m_1-1$, that is, we have $\underline b\in[a^{m_1}]\eqdef [a_0\ldots a_{m_1-1}]$.
\smallskip

\noindent\textbf{Consideration of one-sided subshifts and choice of the equilibrium measures.}
Contrary to the simplifying notation in Section~\ref{sec:bridge}, we will now keep track of the one-sided aspect and use the notation ${}^+$.
Consider the natural projection $\pi^+\colon\Sigma_L\to\Sigma_L^+=\{0,\ldots,L-1\}^{\bN_0}$ given by $\pi^+(\ldots i_{-1}.i_0\ldots)\eqdef(i_0\ldots)$. Denote by 
\[
	\Sigma^{+,\ell}
	\eqdef \pi^+(\Sigma^\ell)
\]
the corresponding one-sided SFT's (which code the dynamics on the local unstable manifolds of the basic sets $\Lambda_\ell$).

For each $\ell\ge1$ let $\lambda_\ell\in\cM(G|_{\Lambda_\ell})$ be an equilibrium measure for some H\"older continuous potential as in the hypothesis of the proposition. Recalling again (see Section~\ref{sec:symcodflobas}) that $\Pi|_{\Sigma^\ell}\colon\Sigma^\ell\to\Lambda_\ell$ is a bijection, let $\nu_\ell\eqdef({\Pi|_{\Sigma^\ell}}^{-1})_\ast\lambda_\ell$.
Consider the Borel probability measure
\[
	\nu_\ell^+
	\eqdef(\pi^+)_\ast\nu_\ell.
\]

\noindent\textbf{Choice of the potential(s).}
Consider the continuous potential $\varphi^{(\u)}\colon T^1M\to\bR$ defined in~\eqref{phidef}.
Let 
\[
	\Delta=\Delta_{-\varphi^{(\u)}}\colon\Sigma_Q\to\bR
\]	 
denote the associated potential defined as in~\eqref{laut}. 
Consider the potential
\[
	\Delta^+\colon\Sigma_L^+\to\bR,
	\quad
	\Delta^+(i_0i_1\ldots)
	\eqdef \Delta(\ldots a_{-2}a_{-1}.i_0i_1\ldots).
\]
Recall that $\Delta$ and hence $\Delta^+$ are continuous.
\smallskip

This concludes the verification of the Hypothesis 1 and we let $\nu$ be the bridging measure with respect to $((\Sigma^{+,\ell})_\ell,(\nu_\ell^+)_\ell,\Delta^+,[a^{m_1}]^+)$ for some sufficiently rapidly growing sequence $(m_\ell)_\ell$ according to Proposition~\ref{p.pgfaweiss}. Recall that it is a Borel probability measure supported on $[a^{m_1}]^+$.
\smallskip

\noindent\textbf{Modeled counterpart.}
In the following, we will consider as modeled counterpart  $M$ the connected segment bounded by the points $[w,v]$ and $v$ of the unstable manifold $\sW^\u_v$. By Corollary~\ref{cor:unstmani}, for any  $\underline i\in[\ldots a_{-2}a_{-1}.a^{m_1}]$ we have $\Pi(\underline i)\in \sW^\u_v$. For each $\ell\ge1$ the coding map $\Pi$ restricted to $[a^{m_1}]$ is a continuous map. We define 
\[
	\Pi^+\colon [a^{m_1}]^+\to M,\quad
	\Pi^+(\underline i^+)
	\eqdef\Pi(\ldots a_{-2}a_{-1}.\underline i^+)
\]
which hence defines a continuous map.

By the above choice of $\Delta^+$, for any $\underline i\eqdef (\ldots a_{-2}a_{-1}.\underline i^+)$ we have
\begin{equation}\label{eq:before}\begin{split}
	L_m^+(\underline i^+)
	&= \frac1m\sum_{k=0}^{m-1}\Delta^+(\sigma^k(\underline i^+))
	= \frac1m\sum_{k=0}^{m-1}\Delta(\sigma^k(\underline i))	\\
	&=- \frac1m\int_0^{T_m(\underline i)}
		\varphi^{(\u)}(g^t(\Pi(\underline i))\,dt\\
	&=\frac1m\int_0^{T_m(\underline i)}
		\frac{d}{dt}\log\,\lVert dg^t|_{F^\u_{g^t(\Pi(\underline i))}}
			\rVert|_{t=0}\,dt\\	
	&= \frac1m\log \,\lVert dg^{T_m(\underline i)}
			|_{F^\u_{g^t(\Pi(\underline i))}}
			\rVert.
\end{split}\end{equation}
Hence, the limit inferior (superior) of $L_m^+$ coincides with the function $\underline \chi^+\circ\Pi$ (with $\overline\chi^+\circ\Pi$), where $\underline\chi^+$ ($\underline i^+$) is defined as in~\eqref{eq:defexponents}. 

Recall that each basic set $\Lambda_\ell$ has topological dimension one and that $\Lambda_\ell\cap S_1$ has topological dimension zero.
Let $N_\ell$ be an admissible length of the SFT $\Sigma^\ell$ and note that the family of sets $\{\Pi([i_{-kN_\ell}\ldots i_{kN_\ell}])\colon \underline i\in\Sigma^\ell\}$ provides a cover of $\Lambda_\ell\cap\cup\eS$ whose  diameter decreases when $k$ increases (see Lemma~\ref{lem:expansive}). Hence, without loss of generality, we can assume that  the family $\{\Pi^+([i_0\ldots i_{kN_\ell-1}]^+) \colon\underline i^+\in\Sigma^{+,\ell}\}$ is a family of pairwise disjoint sets. 

By hyperbolicity of $\Lambda_\ell$, local unstable manifolds have a uniform minimal size at any point in $\Lambda_\ell$. Hence, we have $\diam(\Pi([\underline i^-.]))\ge C_ \ell$ for any $\underline i\in\Sigma^\ell$ for some positive number $C_\ell$.

Note that any set $I\eqdef\Pi([\underline a^-.i_0\ldots i_{N-1}])=\Pi^+([i_0\ldots i_{N-1}]^+)$ is a connected segment of the local unstable  manifold $\sW^\u_v$ which contains $v=\Pi(\underline a)$. Recall again that the transition times between local cross sections are locally constant (Remark~\ref{rem:invtratime}) and hence $T_m(\underline a^-.\underline i^+)=T_m(\underline a)$ for any $\underline i^+\in [a^{m_1}]^+$. This, together with the fact that the geodesic flow is a factor of a suspension flow implies  (we denote $T=T_{kN_\ell}(\underline a)$ for short)
\[\begin{split}
	g^{T}\circ\Pi^+([ i_0\ldots i_{kN_\ell-1}]^+)
	&= g^{T}\circ p([\underline a^-.  i_0\ldots i_{kN_\ell-1}],0)\\
	\text{(property of the factor) }
	&= p\circ f^{T}([\underline a^-.  i_0\ldots i_{kN_\ell-1}],0)\\
	\text{(property of the suspension flow) }
	&= p(\sigma^{kT}([\underline a^-.  i_0\ldots i_{kN_\ell-1}]),0)\\
	\text{(definition of $p$) }
	&= \Pi(\sigma^{kT}([\underline a^-.  i_0\ldots i_{kN_\ell-1}]))\\
	&=\Pi([\underline a^-i_0\ldots i_{kN_\ell-1}.])	\\
\end{split}\]
By the above, the latter contains a segment $I$ of diameter at least $C_\ell>0$. Since the subbundle $F^\u$ is everywhere tangent to $\sW^\u$ and recalling the definition of $\varphi^{(\u)}$ in~\eqref{phidef}, for the diameter of $I$ and its image we have
\[\begin{split}
	&\diam_M(I) \\
	&	\phantom{w}
	\le  \diam_M(\Pi^+([i_0\ldots i_{kN_\ell-1}]^+)) 
		\Big(\max_{w\in \Pi^+([i_0\ldots i_{kN_\ell-1}]^+)}\,
			\lVert dg^{T_{kN_\ell}(\underline a)}|_{F^\u_w}\rVert\Big)\\
	&	\phantom{w}
	\le  \diam_M(\Pi^+([i_0\ldots i_{kN_\ell-1}]^+)) 
		 e^{\max_{w\in  \Pi^+([i_0\ldots i_{kN_\ell-1}]^+)} 
		 	-\int_0^{T_{kN_\ell}(\underline a)}\varphi^{(\u)}(g^t(w))\,dt}\\
	&\text{by~\eqref{eq:before} }
	\le  \diam_M(\Pi^+([i_0\ldots i_{kN_\ell-1}]^+)) 
		 e^{\max_{\underline i^+\in [i_0\ldots i_{kN_\ell-1}]^+}
		 	{kN_\ell L^+_{kN_\ell}(\underline i^+)}} \\
	&\text{by~\eqref{distorrrrtion} }
	\le  \diam_M(\Pi^+([i_0\ldots i_{kN_\ell-1}]^+)) 
		 e^{kN_\ell\rho^\ell_{kN_\ell}}
		 e^{{kN_\ell L^+_{kN_\ell}(\underline a^+)}}		 .
\end{split}\] 
This implies
\[
	\diam_M(\Pi^+([i_0\ldots i_{kN_\ell-1}]^+)) 
	\ge C_\ell e^{-kN_\ell\rho^\ell_{kN_\ell}}
		 e^{-{kN_\ell L^+_{kN_\ell}(\underline a^+)}}
\]
and hence we verified property~\eqref{H2:diameter} of Hypothesis 2.

This finishes the verification of Hypothesis 2.

Let $\lambda\eqdef\Pi_\ast\nu$ be the bridging measure for $((\Sigma^{+,\ell})_\ell,(\nu^+_\ell)_\ell,\Delta^+,[a^{m_1}]^+)$, the choice of $(m_\ell)_\ell$ and the modeled counterpart $\Pi^+\colon[a^{m_1}]^+\to M$.
By Proposition \ref{p.pgfaweiss-b}, there exists a $\lambda$-full measure set $L^+\subset M\subset \Pi^+([a^{m_1}]^+)\subset\sW^\u_v$ so that for every $x\in L^+$ we have $\underline d_\lambda(x)\ge\liminf_{\ell\to\infty}h_\ell/\Delta_\ell$ and and
\[
	\underline\chi(x)
	=\liminf_{\ell\to\infty}\Delta_\ell
	\quad\text{ and }\quad
	\overline\chi(x)
	=\limsup_{\ell\to\infty}\Delta_\ell.
\]

To estimate the Hausdorff dimension, we will now use the following result.

\begin{lemma}[{Non-uniform mass distribution principle~\cite[Proposition 2.1]{You:82}}]
	Let $Z\subset\bR^N$ be measurable and let $\mu$ be a finite non-atomic Borel measure on $\bR^n$ with $\mu(Z)>0$. Suppose that for every $x\in Z$ we have 
	\[
		D\le \underline d_\mu(x).
	\]
	Then $\dim_{\rm H}(Z)\ge D$.
\end{lemma}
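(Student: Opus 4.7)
The plan is to run a standard Frostman-type argument. Fix any $s<D$; I would show that $\dim_{\rm H}(Z)\ge s$, and then let $s\nearrow D$. The local dimension hypothesis $\underline d_\mu(x)\ge D$ for $x\in Z$ means precisely that for every $x\in Z$ there exists $r_0(x)>0$ such that $\mu(B(x,r))\le r^s$ whenever $0<r<r_0(x)$; indeed, $\log\mu(B(x,r))/\log r \ge s$ for small $r$ inverts (because $\log r<0$) to $\mu(B(x,r))\le r^s$.

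Next, to turn this pointwise bound into a uniform one on a set of positive measure, I would stratify $Z$ by the size of $r_0(x)$: set
\[
	Z_n\eqdef\{x\in Z\colon \mu(B(x,r))\le r^s\text{ for all }0<r<1/n\}.
\]
Then $Z=\bigcup_{n\ge1}Z_n$ is an increasing union, so by countable additivity there exists $n_0$ with $\mu(Z_{n_0})>0$ (we only need that $\mu$ is $\sigma$-finite here, which is automatic since it is finite).

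Finally, to estimate $\mathcal{H}^s(Z_{n_0})$ from below, take any cover $\{U_i\}$ of $Z_{n_0}$ by sets of diameter $d_i<1/(2n_0)$. For each $U_i$ that meets $Z_{n_0}$, pick $x_i\in U_i\cap Z_{n_0}$; then $U_i\subset B(x_i,2d_i)$, and $2d_i<1/n_0$, so by definition of $Z_{n_0}$ we get $\mu(U_i)\le (2d_i)^s=2^s d_i^s$. Summing,
\[
	0<\mu(Z_{n_0})\le \sum_i\mu(U_i)\le 2^s\sum_i d_i^s,
\]
hence $\sum_i d_i^s\ge 2^{-s}\mu(Z_{n_0})>0$ for every admissible cover. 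Passing to the infimum over covers gives $\mathcal{H}^s_\delta(Z_{n_0})\ge 2^{-s}\mu(Z_{n_0})>0$ for all $\delta<1/(2n_0)$, so $\mathcal{H}^s(Z_{n_0})>0$ and thus $\dim_{\rm H}(Z)\ge\dim_{\rm H}(Z_{n_0})\ge s$.

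There is no real obstacle here; the only technicality worth flagging is the passage from balls to arbitrary cover elements, where one pays a harmless factor of $2^s$ by enclosing each $U_i$ in a ball of radius equal to its diameter centered at a chosen point of $U_i\cap Z_{n_0}$. The hypothesis that $\mu$ is non-atomic is not actually needed for the bound as stated; what is essential is that $\mu(Z)>0$ so that some stratum $Z_n$ inherits positive mass.
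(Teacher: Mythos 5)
Your argument is correct. The paper does not prove this lemma at all --- it simply cites Young \cite[Proposition 2.1]{You:82} --- and your stratification-plus-Frostman argument is precisely the standard proof of that cited result: convert $\underline d_\mu(x)\ge D>s$ into the uniform bound $\mu(B(x,r))\le r^s$ on strata $Z_n$, extract a stratum of positive mass, and bound $\mathcal H^s_\delta(Z_{n_0})$ from below via the $2^s$ ball-enlargement of an arbitrary cover. Two harmless technicalities you could flag explicitly: the sets $Z_n$ need not obviously be Borel, but monotonicity and countable subadditivity of the induced outer measure (which is Borel regular, hence continuous along increasing sequences) make the argument go through verbatim; and your observation that non-atomicity is superfluous is right, since for $x\in Z_n$ one already has $\mu(\{x\})\le r^s\to0$, while for $D\le 0$ the statement is vacuous.
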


This implies $\dim_{\rm H}(L^+)\ge \liminf_{\ell\to\infty}h_\ell/\Delta_\ell$, proving the proposition.
\end{proof}

We finally give a description of some (large) subset of the level set $\cL(0)=\cL^-(0)\cap\cL^+(0)$, which is an immediate consequence of the above proof.

\begin{corollary}\label{cor:product}
	For $v,L^-,L^+$ as in the assertion of Theorem \ref{jen}, we have
\[
	\{[w_1,w_2]\colon w_1\in L^-,w_2\in L^+\}
	\subset\cL(0)
\]	
\end{corollary}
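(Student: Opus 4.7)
The plan is to combine the local product structure of Lemma~\ref{lem:locpro} with the Birkhoff-average characterization of $\cL^{\pm}(0)$ provided by Proposition~\ref{pro:whams}, and to exploit the fact that these averages are preserved along leaves of the strong stable and unstable foliations as soon as $\varphi^{(\u)}$ is continuous on the compact manifold $T^{1}M$.

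Given $w_{1}\in L^{-}$ and $w_{2}\in L^{+}$, I would first apply Lemma~\ref{lem:locpro} to obtain $u:=[w_{1},w_{2}]\in\sW^{\s}_{\varepsilon}(g^{t}(w_{1}))\cap\sW^{\u}_{\varepsilon}(w_{2})$ for some $|t|\le\varepsilon$. Since $\varphi^{(\u)}$ is continuous on the compact manifold $T^{1}M$, hence uniformly continuous, a standard Ces\`aro argument shows that any two points on a common leaf of $\sW^{\s}$ (resp.\ $\sW^{\u}$) share the same forward (resp.\ backward) Birkhoff average of $\varphi^{(\u)}$ whenever one of the two limits exists, and these averages are manifestly invariant under time-shifts along the flow. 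Applied to the pair $u,g^{t}(w_{1})$ along $\sW^{\s}$ and to the pair $u,w_{2}$ along $\sW^{\u}$, together with Proposition~\ref{pro:whams}, this will identify the forward Lyapunov exponent at $u$ with that at $w_{1}$ and the backward one at $u$ with that at $w_{2}$. Propagating the same invariance once more along $w_{1}\in\sW^{\s}_{\varepsilon}(v)$ and $w_{2}\in\sW^{\u}_{\varepsilon}(v)$ reduces both to the corresponding exponents at the base point $v$.

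To conclude $u\in\cL^{+}(0)\cap\cL^{-}(0)=\cL(0)$, I would then invoke the specific bridging construction of $L^{\pm}$ from Proposition~\ref{prop:boundary}: the approximating equilibrium measures $\lambda_{k}$ on hyperbolic basic sets $\Lambda_{\ell(k)}$ are chosen with $\chi(\lambda_{k})\to 0$, and on each such basic set the forward and backward Lyapunov exponents of a $\lambda_{k}$-typical orbit coincide, so the symmetry between forward and backward time built into the bridging procedure forces both the forward and the backward Birkhoff averages of $\varphi^{(\u)}$ at the bridging-typical points of $L^{\pm}$ to vanish. The main (and essentially only) delicate step will be the bookkeeping that matches the direction (forward vs.\ backward) in which the bracket $[\cdot,\cdot]$ transports the Birkhoff averages with the direction in which the bridging construction forces them to vanish; once this orientation is tracked, no quantitative input beyond the uniform continuity of $\varphi^{(\u)}$ and the characterization in Proposition~\ref{pro:whams} is needed.
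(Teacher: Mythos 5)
Your reduction of the problem to transporting Birkhoff averages of $\varphi^{(\u)}$ along leaves is in the right spirit, but the step you defer to the end (``the bookkeeping that matches the direction'') is exactly where the content lies, and your resolution of it is wrong. The bridging construction has \emph{no} forward/backward symmetry: $L^+$ is obtained from a bridging measure on the \emph{one-sided} shift, so a point $w_2\in L^+$ is $\Pi(\underline a^-.\,\underline i^+)$ with only its forward itinerary $\underline i^+$ bridging-typical, while its backward itinerary is frozen to that of the base point $v$. Hence $w_2$ has forward exponent $0$ but backward exponent equal to that of $v$ (nonzero, e.g., if $v$ lies on a hyperbolic periodic orbit of $\Lambda_1$), and symmetrically each $w_1\in L^-$ has backward exponent $0$ but forward exponent equal to that of $v$. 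Your claim that ``the symmetry built into the bridging procedure forces both the forward and the backward averages at points of $L^\pm$ to vanish'' is therefore false; indeed, combined with your own transport chain (forward exponent of $u$ equals that of $w_1$, which by your ``once more'' step along $\sW^\s_\varepsilon(v)$ equals that of $v$, and likewise backward via $w_2$), it would force $\chi(v)=0$ for every $v$ in the dense set $D$ of Theorem~\ref{jen}, which is absurd. Your argument as written proves only that $u$ has the exponents of $v$.

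The underlying issue is the orientation of the bracket. With the convention of Lemma~\ref{lem:locpro}, a point of $\sW^\s_\varepsilon(g^t(w_1))\cap\sW^\u_\varepsilon(w_2)$ with $w_1\in\sW^\s_\varepsilon(v)$ and $w_2\in\sW^\u_\varepsilon(v)$ inherits its forward asymptotics from $w_1$ and its backward asymptotics from $w_2$ --- precisely the two halves on which the construction gives no control (both frozen to the itinerary of $v$); in fact that intersection lies in $\sW^{\cs}_v\cap\sW^\u_v$ and is essentially $v$ itself. The corollary must be read the other way around, which is what the paper's proof does symbolically: the relevant point is $\Pi(\underline i^-.\,\underline i^+)$, i.e., the point of the local product structure whose \emph{backward} itinerary is the bridging-typical sequence of the $L^-$-point and whose \emph{forward} itinerary is that of the $L^+$-point (the intersection of the center-unstable leaf of $w_1$ with the center-stable leaf of $w_2$). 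For that point one transports the forward average from $w_2$ and the backward average from $w_1$, obtaining exponent $0$ in both time directions. A further, smaller gap: to get \emph{equality} of asymptotic averages along a stable (resp.\ unstable) leaf, continuity of $\varphi^{(\u)}$ plus ``staying $\varepsilon$-close'' only gives agreement up to the modulus of continuity at scale $\varepsilon$; one needs the forward (resp.\ backward) distance between the orbits to tend to $0$, which in nonpositive curvature is not automatic (it is only non-increasing) and here follows from expansivity via the Flat Strip Theorem, or alternatively one argues symbolically through the tempered distortion property~\eqref{distorrrrtion}, as the machinery behind Proposition~\ref{prop:boundary} does.
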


\begin{proof}
	It suffices to observe that $L^+=\Pi^+(\widehat K^+)$, where $\widehat K^+\subset \Sigma_Q^+$ is some set of forward one-sided sequences which has full measure with respect to the bridging measure $\nu^+$ with respect to $((\Sigma^{+,\ell})_\ell,(\nu^+_\ell)_\ell,\Delta^+,[a^{m_1}]^+)$. The analogous construction can be done to obtain some set $L^-=\Pi^-(\widehat K^-)$ of backward one-sided sequences which has full measure with respect to the bridging measure $\nu^-$ with respect to $((\Sigma^{-,\ell})_\ell,(\nu^-_\ell)_\ell,\Delta^-,[a^{m_1}]^-)$. By the local product structure, we can well-define the set
	\[
		L
		\eqdef \{[\Pi^-(\underline i^-),\Pi^+(\underline i^+)]\colon\underline i^\pm\in\widehat K^\pm\}
	\]
	which has the claimed properties.
\end{proof}

\end{document}